\newcommand{\zerobf}{\boldsymbol{0}}
\newtheorem{hypo}{Assumption}[section]
\newtheorem{defi}[hypo]{Definition}
\newtheorem{theo}[hypo]{Theorem}
\newtheorem{lema}[hypo]{Lemma}
\newtheorem{prop}[hypo]{Proposition}
\newtheorem{rem}[hypo]{Remark}
\newcommand{\ba}{{\mathbf a}}
\newcommand{\be}{{\mathbf e}}
\newcommand{\bff}{{\mathbf f}}
\newcommand{\bn}{{\mathbf n}}
\newcommand{\bv}{{\mathbf v}}
\newcommand{\bw}{{\mathbf w}}
\newcommand{\bx}{{\mathbf x}}
\newcommand{\by}{{\mathbf y}}
\newcommand{\bz}{{\mathbf z}}
\newcommand{\bPsi}{\bm{\Psi}}
\newcommand{\bV}{{\mathbf V}}
\newcommand{\bW}{{\mathbf W}}
\newcommand{\fm}{{\mathfrak m}}
\newcommand{\fj}{{\mathfrak j}}
\newcommand{\fp}{{\mathfrak p}}
\newcommand{\fq}{{\mathfrak q}}
\newcommand{\fv}{{\mathfrak v}}
\newcommand{\bfv}{\bm{\mathfrak v}}
\newcommand{\bfw}{\bm{\mathfrak w}}
\newcommand{\IQ}{{\mathbb Q}}
\newcommand{\IR}{{\mathbb R}}
\newcommand{\IN}{{\mathbb N}}
\newcommand{\IZ}{{\mathbb Z}}
\newcommand\xscal[3]{\left< #1, #2 \right>_{#3}}
\newcommand\xnorm[2]{\left\lVert #1 \right\rVert_{#2}}
\newcommand\mA{\mathcal{A}}
\newcommand\mB{\mathcal{B}}
\newcommand\mG{\mathcal{G}}
\newcommand\mO{\mathcal{O}}
\newcommand\bmV{\bm{\mathcal V}}
\newcommand\inc{\textsf{inc}}
\newcommand\loc{\textsf{loc}}
\newcommand\apriori{\emph{a priori} }
\newcommand\aposteriori{\emph{a posteriori} }
\DeclareMathOperator{\Laplace}{\Delta}
\DeclareMathOperator{\Div}{div}
\DeclareMathOperator{\Curl}{\mathbf{curl}}
\newcommand{\Ltwo}       {\mathrm{L}^2}
\newcommand{\Ltwoloc}    {\mathrm{L}_{\loc}^2}
\newcommand{\Hhalf}      {\mathrm{H}^{\nicefrac12}}
\newcommand{\Hone}       {\mathrm{H}^1}
\newcommand{\Honezero}       {\mathrm{H}_0^1}
\newcommand{\Hs}         {\mathrm{H}}
\renewcommand\imath{\mathrm{i}}
\newcommand\epsi{\varepsilon}
\renewcommand\Re{\mathrm{Re}}
\renewcommand\Im{\mathrm{Im}}
\newcommand{\eg}{\emph{e.\,g\mbox{.}},\xspace}
\newcommand{\ie}{\emph{i.\,e\mbox{.}},\xspace}
\newcommand{\etal}{\emph{et\,al\mbox{.}}\xspace}
\newcommand{\cf}{\textrm{cf.}\xspace}
\pgfplotsset{compat=1.14} 
\begin{document}
\begin{center} 

{\Large{Surface homogenization of an array of Helmholtz resonators for a viscoacoustic model using two-scale convergence}}\\[0.5cm]

{\large{Kersten Schmidt$^{a}$, Adrien Semin$^{a}$}}\\[0.5cm]

{\small $a$: Technische Universität Darmstadt, Fachbereich Mathematik,
  AG Numerik und Wissenschaftliches Rechnen,
  Dolivostrasse 15, 64293 Darmstadt,Germany}\\
\end{center}

{\small \noindent \textbf{Corresponding author:} Kersten Schmidt,
  Technische Universität Darmstadt, Fachbereich Mathematik,
  AG Numerik und Wissenschaftliches Rechnen,
  Dolivostrasse 15, 64293 Darmstadt,Germany\\
  E-mail: kschmidt@mathematik.tu-darmstadt.de}\\[0.5cm]

\begin{abstract}
  We derive the weak limit of a linear viscoacoustic model in an acoustic liner
that is a chamber connected to a periodic repetition of elongated chambers --
the Helmholtz resonators. %
As model we consider the time-harmonic and linearized compressible
Navier-Stokes equations for the acoustic velocity and pressure. %
Following the approach in Schmidt~\etal, J. Math. Ind 8:15, 2018 for the viscoacoustic transmission problem of multiperforated plates
the viscosity is scaled as $\delta^{4}$ with the period $\delta$ of the array
of chambers and the size of the necks as well as the wall thickness like
$\delta^2$
such that the viscous boundary layers
are of the order of the size of the necks. %
Applying the method of two-scale convergence 
we obtain with a stability assumption in the limit 
$\delta \to 0$ that the acoustic pressure fulfills the Helmholz equation with
impedance boundary conditions. %
These boundary conditions depend on the frequency, the length of the
resonators and through the effective Rayleigh conductivity -- that can be
computed numerically -- on the shape of their necks. We compare the limit
model to semi-analytical models in the literature.
\end{abstract}

\noindent \textbf{Keywords}\\
asymptotic analysis; periodic surface homogenization; singular
asymptotic expansions; stress intensity factor.\\

\noindent \textbf{AMS Subject Classification} 32S05, 35C20, 35J05,
35J20, 41A60, 65D15



\tableofcontents

\section{Introduction}

The noise emission from aircraft gas turbines, car engines and several other
industrial applications is a matter of high concern. Its reduction is of major
public interest since it affects health and life of the community. This noise
reduction is also of major industrial interest. Especially, nowadays combustion
processes create acoustic sources of higher intensity in aircraft engines,
which in their turn create acoustic instabilities around particular frequencies
and may even harm the live time of the gas turbine. Engineers study liners,
which are perforated wall segments, that are able to suppress thermo-acoustic
instabilities and can provide a substantial amount of acoustic damping. An
important type of acounstic liner for aero-engine inlet and exhaust ducts
constitues of a array of small cells called Helmholtz resonators. Each of the
Helmholtz resonators -- the name goes back to H.~Helmholtz\cite{Helmholtz:1863}
consists of a rigid chamber filled with air that is connected to the
surrounding by a hole of a perforated plate, that is called orifice or even
neck. When excited with a fluctuating external pressure, that comes \eg from
the combusion process, the mass of the air inside and around the orifice moves
agains the large volume of compressible air inside the cavity, while viscous
effects cause dissipation of energy. This can be modeled as a
mass-spring-damping system.  The damping of this system is relatively small
except for frequencies close to the resonance frequencies of the liner where it
becomes considerably large.  The resonance frequencies and damping properties
depend mainly on the geometrical parameters of the resonators.

For a small Helmholtz resonator the first resonance frequency -- the Helmholtz resonance -- can be approximated by a simple formula\cite{Helmholtz:1863}
of Helmholtz
that has been justified by a mathematical analysis of the spectrum of the Laplace operator by Schweizer\cite{Schweizer:2015}
as well as by an asymptotic analysis of the Green's function of the Helmholtz equation\cite{Ammari.Zhang:2015}.
It is based on the observation that the pressure is almost uniform inside the resonator and
therefore the formula does not depend on the shape of the resonator, especially, if it is elongated or of compact size.
The simple formula has been improved by a so-called end-correction\cite{Morse:1948,Ingard:1953} of the aperture thickness. %
For elongated resonance chambers approximations of each resonance frequency can be obtained as
solutions of a nonlinear equations using semi-analytical formulas for the behaviour of the pressure
around the hole and in the chamber\cite{Panton.Miller:1975}.

For an effective damping a large number of Helmholtz resonators are arrayed. Due to the high number of resonators and the involved smaller
geometrical scales a direct numerical computations, \eg with the finite element method, would be not feasible. One is therefore
interested in equivalent problems in the domain above the resonators and multiperforated plate. %
To predict the frequency dependent damping properties of array of Helmholtz resonators impedance boundary conditions
has been proposed that depend on one complex function of the frequency -- the (normalized specified) acoustic impedance.
First, such a semi-analytic formula for the impedance has been introduced by Guess\cite{Guess:1975}
that depends again on the end-correction of the aperture thickness, to which nonlinear terms for high sound amplitudes 
can be added\cite{Ingard.Ising:1967,Singh.Rienstra:2014} that are especially important close to resonance
as well as terms in presence of a gracing flow\cite{Kooi.Sarin:1981,Rienstra.Singh:2018,SchulzDiss:2018}.
The impedance of an array of Helmholtz resonators is also computed numerically by coupling the 
instationary viscous Navier-Stokes equations in frequency 
domain in some region around the orifice with the Euler equation away from it by Lidoine~\etal\cite{Lidoine.Terasse.Abboud.Bennani:2007}.
A similar approach in time-domain on meshes refined close to the orifices is used in\cite{Tam.Kurbatskii:2000}  where only close to the orifices
viscosity is considered. %
However, it is not clear what is a good choice of the ``viscous'' region. %

In this contribution, we present an asymptotic homogenization of an
array of Helmholtz resonators of depth $L$, of small period $\delta$ and of even smaller diameter of the orifices
that is of the order $\delta^2$ taking the viscosity scaled like $\delta^4$ into account.
In this way the respective dominant behaviour in three different geometric scales is considered.
We derive impedance boundary conditions applying the method of two-scale convergence\cite{Nguetseng:1989,Allaire:1992} 
to the three different scales of the problem. To justify the weak convergence to the limit 
the stability estimate of the $\delta$-dependent problem has to assumed.
The impedance boundary conditions is expressed in terms of the 
\emph{effective Rayleigh conductivity} of a perforated plate~\cite{Schmidt.Semin.ThoensZueva.Bake:2018} (see 
\cite{Bendali.Fares.Piot.Tordeux:2012,Bendali.Fares.Laurens.Tordeux:2012} for zero viscosity)
and in terms of the reactance of the chambers that depends above all on their depths.
The effective Rayleigh conductivity is the Rayleigh conductivity~\cite{Rayleigh:1870,Rayleigh:1945} of one hole,
which describes the ratio of the fluctuating volume flow through the hole to the driving pressure difference
across the hole, divided by the area of one periodicity cell of the array. %
The effective Rayleigh conductivity depends on the geometrical
parameters, especially, size and shape of the necks of the Helmholtz resonators
and the distance between two resonators, as well as the physical parameters,
especially the acoustic viscosities and the excitation frequency.

Asymptotic homogenization for periodic transmission problems were performed 
for the Stokes equation with three scales\cite{Sanchez.Sanchez:1982},
with two scales for the Helmholtz equation\cite{Bonnet.Drissi.Gmati:2005},
using the periodic unfolding method\cite{Lukes.Rohan:2007} 
and the method of matched asymptotic expansion\cite{Delourme.Haddar.Joly:2012,Claeys.Delourme:2013}, 
also with impedance boundary conditions in the holes\cite{Semin.Schmidt:2018}.
Asymptotic homogenization for locally periodic transmission problems, where microstructures has finite size, and that takes 
the singular behaviour at the end of the microstructure into account,
was derived and justified for the Laplace equation\cite{Nazarov:2008,Delourme.Schmidt.Semin:2016}
and the Helmholtz equation\cite{Semin.Delourme.Schmidt:2018}.

The article is subdivided as follows. In Sec.~\ref{sec:descr-probl-main} we
define the model problem of the viscous acoustic equations in terms of the acoustic velocity and
pressure and the equivalent impedance boundary conditions on the array of
resonators for the velocity and pressure. We also give
as main result the weak
convergence of the velocity and the pressure to their limits that fullfill
a Helmholtz problem with the derived equivalent impedance boundary
condition. For this result the assumption of an \emph{a priori} stability result is needed
that shall be proved in a forthcoming article.
Sec.~\ref{sec:deriv-just-main} is dedicated to the
proof of the weak convergence to a limit using two-scale convergence
step by step in different asymptotic regions where finally the limit model including impedance boundary conditions is obtained.
Finally, in Sec.~\ref{sec:numerical-results}, the equivalent impedance boundary
conditions are studied numerically and compared with the established model of Guess\cite{Guess:1975},
both locally based on the reactance and resistance curves and resonance frequencies 
as well as macroscopically based on the dissipation behaviour of an array of Helmholtz resonators in a duct.


\section{Description of the problem and main results}
\label{sec:descr-probl-main}

\subsection{Description of the problem}

We consider a three-dimensional domain $\Omega$ that is open, simply
connected and bounded with smooth boundary $\partial\Omega$. %
For the sake of simplicity we consider that $\Omega$ is 
included in the half-space $\IR^2 \times \IR_+$ 
such that its boundary $\partial \Omega$ gives a non-empty intersection with the
plane $\lbrace x_3 = 0 \rbrace$.

We consider the surface $\Gamma$ as a parallelepipedic subset of $\partial \Omega
\cap \lbrace \bx \in \IR^3, x_3 = 0\rbrace$. We extend then the domain $\Omega$ to a
domain containing an array of Helmholtz resonators. We assume this
array to be periodic, \ie there exists two fixed vectors $\ba_1$ and
$\ba_2$ such that the centered parallelogram $\mA$ spanned by the
vectors $\ba_1$ and $\ba_2$ is of area equal to $1$, and there exists
$\delta > 0$ such that the set centers of apertures of resonators is
given by (see Fig.~\ref{fig:example_canonical_resonator}a)
\begin{equation}
\label{eq:set_centers}
\Gamma^\delta := \Gamma \cap \big( \delta \ba_1 \IZ + \delta \ba_2
\IZ \big).
\end{equation}
For simplicity, we assume that there exists $L_1, L_2 > 0$ with $L_1 / L_2 \in
\IQ$ such that
\begin{equation*}
\Gamma = \lbrace \bx = s_1 \ba_1 + s_2 \ba_2 \quad \text{with} \quad (s_1,s_2) \in
(0,L_1) \times (0,L_2) \rbrace,
\end{equation*}
and $\delta$ is chosen such that $L_1 / \delta$ and $L_2 / \delta$ are positive
integers, \ie the number of Helmholtz resonators is equal to $L_1L_2/\delta^2$. 
To define the resonator chamber we introduce its cross
section $\mA_C \subset \mA$ that is a two-dimensional smooth open and simply-connected domain, and two constants
$d_0,h_0 > 0$. For $(n_1,n_2) \in \IZ^2$ such that
$\bx^\delta_\Gamma := \delta n_1 \ba_1 + \delta n_2 \ba_2 \in \Gamma^\delta$,
the resonator $\Omega^\delta_H(\bx^\delta_\Gamma)$ consists of a chamber part
\begin{equation}
\label{eq:resonator_chamber}
\Omega^\delta_C(\bx^\delta_\Gamma) := \bx^\delta_\Gamma + \delta
\mA_C \times (-L, -\delta^2 h_0),
\end{equation}
and a neck part
\begin{equation}
\label{eq:resonator_neck}
\Omega^\delta_N(\bx^\delta_\Gamma) := \bx^\delta_\Gamma +
\delta^2 \Omega_N
%
\end{equation}
with the bounded, open and simply connected Lipschitz domain $\Omega_N \subset \IR^2 \times (-h_0,0)$
where $0 \in \overline{\Omega_N}$ and the submanifolds $\partial\Omega_N \cap \IR^2\times\{0\}$ and $\partial\Omega_N \cap \IR^2\times\{-h_0\}$
are non-empty and smooth. %
Moreover, we consider $\delta$ such that $\delta\left(\partial\Omega_N \cap \IR^2\times\{-h_0\}\right) \subset \mA_C \times \{-\delta h_0\}$.
A chamber and neck builds a resonator $\Omega^\delta_H(\bx^\delta_\Gamma) =
\Omega^\delta_C(\bx^\delta_\Gamma) \cup
\Omega^\delta_N(\bx^\delta_\Gamma)$, and extending the domain $\Omega$ by the union of all resonsators 
we obtain the computational domain $\Omega^\delta$ whose closure
is defined by
\begin{equation}
\label{eq:Omega_delta}
\overline{\Omega^\delta} := \overline{\Omega} \cup \bigcup_{\bx^\delta_\Gamma \in
    \Gamma^\delta} \overline{\Omega^\delta_H(\bx^\delta_\Gamma)}.
\end{equation}

On the domain $\Omega^\delta$ we introduce the acoustic equations in
the framework of Landau and Lifschitz\cite{Landau.Lifschitz:1959} as
a perturbation of the Navier-Stokes equations around a stagnant
uniform fluid with mean density $\rho_0$.

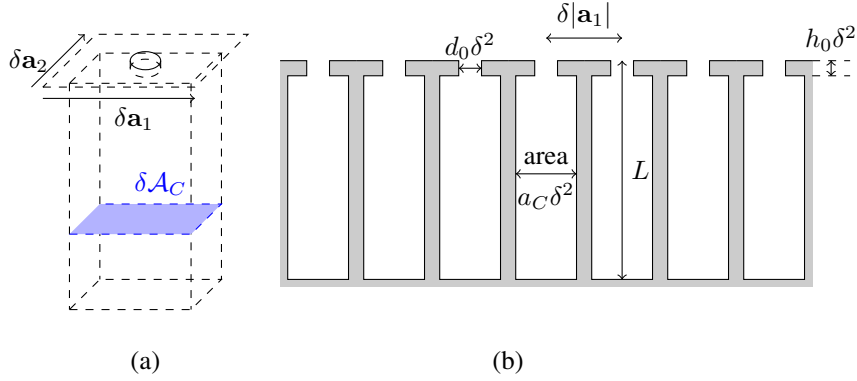
\begin{figure}[bt]
    \centering
    \null \hfill
    \begin{tikzpicture}
    \draw[dashed] (-1.35,-0.35) -- (0.65,-0.35) -- (1.35,0.35) -- (-0.65,0.35) --
    (-1.35,-0.35);
    
    \draw[->] (-1.35,-0.5) -- (0.65,-0.5) node [pos=0.6,below] {$\delta\ba_1$};
    \draw[->] (-1.5,-0.35) -- (-0.8,0.35) node [pos=0.5,left] {$\delta\ba_2$};
    \draw (0,0) ellipse (0.2 and 0.12);
    \draw[dashed] (0,-0.1) ellipse (0.2 and 0.12);
    \draw[dashed] (-0.2,0) -- (-0.2,-0.1);
    \draw[dashed] (0.2,0) -- (0.2,-0.1);
    \draw[dashed] (-1.0,-0.3) -- (0.6,-0.3) -- (1.0,0.1) -- (-0.6,0.1) --
    (-1.0,-0.3);
    \draw[dashed] (-1.0,-0.3) -- (-1.0,-3.3);
    \draw[dashed] (0.6,-0.3) -- (0.6,-3.3);
    \draw[dashed] (1.0,0.1) -- (1.0,-2.9);
    \draw[dashed] (-0.6,0.1) -- (-0.6,-2.9);
    \draw[dashed] (-1.0,-3.3) -- (0.6,-3.3) -- (1.0,-2.9) -- (-0.6,-2.9) --
    (-1.0,-3.3);
    \filldraw[fill=blue!30!white,draw=blue,dashed] (-1.0,-2.3) --
    (0.6,-2.3) -- (1.0,-1.9) -- (-0.6,-1.9) node[pos=0.5,above]
    {\textcolor{blue}{$\delta \mA_C$}};
    \node at (0,-4) {(a)};
    \end{tikzpicture}
    \hfill
    \begin{tikzpicture}
    \foreach \i in {-3,...,3}
    {
        \begin{scope}[xshift=\i cm]
        \fill[black!20!white] (0,-3) rectangle (1,0);
        \fill[white] (0.1,-2.9) rectangle (0.9,-0.2);
        \fill[white] (0.35,-0.3) rectangle (0.65,0.05);
        \draw (0,0) -- (0.35,0)-- (0.35,-0.2) -- (0.1,-0.2) -- (0.1,-2.9) -- (0.9,-2.9) -- (0.9,-0.2) -- (0.65,-0.2) -- (0.65,0) -- (1,0); 
        \end{scope}
    }
    \draw[dashed] (4,0) -- (4.5,0);
    \draw[dashed] (4,-0.2) -- (4.5,-0.2);
    \draw[<->] (0.5,0.3) -- (1.5,0.3) node[pos=0.5,above] {$\delta |\ba_1|$};
    \begin{scope}[xshift=-1cm]
    \draw[<->] (0.35,-0.1) -- (0.65,-0.1) node[pos=0.5,above] {$d_0\delta^2$};
    \end{scope}
    \draw[<->] (0.1,-1.5) -- (0.9,-1.5) node[pos=0.5,above]
    {area} node[pos=0.5,below] {$a_C \delta^2$};
    \begin{scope}[xshift=3.75cm]
    \draw[<->] (0.5,0) -- (0.5,-0.2) node[pos=0,above] {$h_0\delta^2$};
    \end{scope}
    \draw[<->] (1.5,-2.9) -- (1.5,0) node[pos=0.5,right] {$L$};
    \node at (0,-4) {(b)};
    \end{tikzpicture}
    \hfill\null
    \caption{(a) Example of one resonator (square-shaped constant
        cross-sections) that connects through $N_0=1$ hole. (b)
        Representation of the array of resonators (cut along one
        one periodicity direction).}
    \label{fig:example_canonical_resonator}
\end{figure}

We consider time-harmonic velocity $\bv^\delta$ and acoustic pressure
$p^\delta$ (the time regime is $\exp(-\imath \omega t)$, $\omega > 0$), which
are solutions of the coupled system
\begin{subequations}
    \label{eq:Navier_Stokes}
    \begin{align}
    - \imath \omega \bv^\delta + \tfrac{1}{\rho_0} \nabla p^\delta - \nu(\delta)
    \Laplace \bv^\delta - \nu'(\delta) \nabla \Div \bv^\delta 
    &= \bff, \quad \text{in }\Omega^\delta, \label{eq:Navier_Stokes:M}
    \\ 
    - \imath \omega p^\delta + \rho_0 c^2 \Div \bv^\delta 
    &= 0, \quad \text{in }\Omega^\delta, \label{eq:Navier_Stokes:C} \\
    \bv^\delta
    &= \zerobf , \quad \text{on } \partial
    \Omega^\delta, \label{eq:Navier_Stokes:B}
    \end{align}
\end{subequations}
with the speed of sound $c$, and the kinematic and secondary viscosities
$\nu(\delta), \nu'(\delta) > 0$
that we scale with the characteristic size of the holes $\delta^2$ as
\begin{equation}
\label{eq:asymptotic_scale_viscosity}
\nu(\delta) = \nu_0 \delta^4 \quad \text{and} \quad \nu'(\delta) =
\nu'_0 \delta^4,
\end{equation}
where $\nu_0, \nu_0'$ are independent of $\delta$. %
In this way the thickness of the boundary layer of the acoustic velocity 
at the rigid
wall\cite{andreev2008pointwise,auregan2001influence,Berggren.Bernland.Noreland:2018,Iftimie.Sueur:2010,Schmidt.Thoens.Joly:2014}
-- that is of order $O(\sqrt{\nu(\delta)})$ -- is of the order 
of characteristic size of the holes $\delta^2$ (see Fig.~\ref{fig:example_canonical_resonator}(b)).
Moreover, the source term $\bff$ is independent of
$\delta$ and compactly supported in $\Omega$ away from its boundary. 
Similar equations have been studied
for a stagnant flow\cite{howe1998acoustics,Iftimie.Sueur:2010,Landau.Lifschitz:1959,Rienstra.Hirschberg:2018} and
for the case that a mean flow is
present\cite{auregan2001influence,howe1979influence,howe1998acoustics,munz2007linearized,rienstra2011boundary}.
Finally, we embed the domain $\Omega^\delta$ and the associate linear
Navier-Stokes problem~\eqref{eq:Navier_Stokes} in a family of problems
that are $\delta$-dependent. 


In the following, we define the effective Rayleigh conductivity of a single hole that will be used then to 
define impedance boundary conditions for the limit of $(\bv^\delta,p^\delta)$ for $\delta\to0$.

\subsection{Effective Rayleigh conductivity}

To relate the pressure jump on different sides of a single hole to pressure and
velocity profiles and eventually the flux through the hole we consider as
characteristic problem an instationary Stokes system in scaled
coordinates\cite{Schmidt.Semin.ThoensZueva.Bake:2018}: Seek
$(\bfv,\fp) \in (\Honezero(\widehat{\Omega}))^3 \times \Ltwoloc(\widehat{\Omega})$
solution of
\begin{equation}
\label{eq:canonical_problem}
\begin{aligned}
- \imath \omega \bfv + \tfrac{1}{\rho_0}\nabla
\fp - \nu_0 \Laplace
\bfv & = \zerobf, && \quad \text{in } \widehat{\Omega}, \\
\Div \bfv & = 0, && \quad \text{in } \widehat{\Omega}, \\
\bfv & = \zerobf , && \quad \text{on } \partial
\widehat{\Omega}, \\
\lim_{S \to \infty} \fp_{|\widehat{\Gamma}_\pm(S)} &= \pm \tfrac{1}{2}\ ,
\end{aligned}
\end{equation}
with the family of surfaces
\begin{align*}
\widehat{\Gamma}_+(S) &:= \{ \bz \in \IR^3 : |\bz| = S, z_3 > 0 \}, \\
\widehat{\Gamma}_-(S) &:= \{ \bz \in \IR^3 : |\bz-(0,0,-h_0)| = S, z_3 < -h_0 \}
\end{align*}
and the rescaled extended aperture domain 
$\widehat{\Omega} := \cup_{S \in \IR^+}\widehat{\Omega}(S)$
connecting two half-spaces 
(see Fig.~\ref{fig:NNF}(c) for illustration) where
\begin{align*}
\widehat{\Omega}(S) := \widehat{\Omega}_A &\cup \{ \bz \in \IR^3 : |\bz| < S, z_3 > 0 \} \\
&\cup \{ \bz \in \IR^3 : |\bz-(0,0,-h_0)| < S, z_3 < -h_0 \}.
\end{align*}
Due to the scaling all other holes are moved towards infinity and canonical
problem considers the dominant phenomena on the scale of one hole, that is
viscosity and incompressibility of the acoustic velocity that is with the
acoustic pressure non-stationary.

The well-posedness of the characteristic problem is stated in the following
\begin{prop}
    \label{prop:well_posedness_aperture_problem}
    There exists a unique solution
    $(\bfv,\fp) \in (\Honezero(\widehat{\Omega}))^3 \times
    \Ltwoloc(\widehat{\Omega})$ of~\eqref{eq:canonical_problem}.
\end{prop}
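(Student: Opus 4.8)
The plan is to reduce \eqref{eq:canonical_problem} to a coercive variational problem for the velocity alone on the divergence-free subspace, solve it by the Lax--Milgram lemma, and recover the pressure afterwards by a de~Rham argument; the only delicate feature is the prescribed pressure at infinity, which I encode through the flux across the neck. Testing the momentum equation in \eqref{eq:canonical_problem} against a field $\bw\in V:=\{\bw\in(\Honezero(\widehat{\Omega}))^3:\Div\bw=0\}$ and integrating by parts, the wall contributions vanish because $\bw=\zerobf$ on $\partial\widehat{\Omega}$, the pressure volume term drops because $\Div\bw=0$, and the only surviving boundary term lives on $\widehat{\Gamma}_\pm(S)$ as $S\to\infty$. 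Inserting the limits $\fp\to\pm\tfrac12$ there and writing $\fj[\bw]:=\int_\Sigma\bw\cdot\bn$ for the (cross-section--independent) flux through a fixed bounded neck cross-section $\Sigma$, the problem becomes: find $\bfv\in V$ with
\begin{equation*}
a(\bfv,\bw):=-\imath\omega\int_{\widehat{\Omega}}\bfv\cdot\overline{\bw}\,d\bz+\nu_0\int_{\widehat{\Omega}}\nabla\bfv:\nabla\overline{\bw}\,d\bz=-\tfrac{1}{\rho_0}\,\overline{\fj[\bw]}\qquad\text{for all }\bw\in V.
\end{equation*}

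The decisive point is that the zeroth-order term $-\imath\omega$ renders $a$ coercive on all of $(\Honezero(\widehat{\Omega}))^3$ with no recourse to a Poincaré inequality, which is unavailable on the unbounded $\widehat{\Omega}$: since $\Re a(\bw,\bw)=\nu_0\norm{\nabla\bw}{\Ltwo}^2$ and $\Im a(\bw,\bw)=-\omega\norm{\bw}{\Ltwo}^2$, one has $|a(\bw,\bw)|=\big(\nu_0^2\norm{\nabla\bw}{\Ltwo}^4+\omega^2\norm{\bw}{\Ltwo}^4\big)^{1/2}\ge\tfrac{1}{\sqrt2}\min(\nu_0,\omega)\,\norm{\bw}{\Hone}^2$. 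Boundedness of $a$ is immediate, and the antilinear functional $\bw\mapsto-\tfrac1{\rho_0}\overline{\fj[\bw]}$ is continuous because $\fj[\bw]=\int_\Sigma\bw\cdot\bn$ is controlled by $\norm{\bw}{\Hone}$ through the trace theorem on the bounded surface $\Sigma$. Hence the complex Lax--Milgram lemma yields a unique $\bfv\in V$.

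For the pressure I would introduce a smooth lifting $\fp_\infty$ equal to $\pm\tfrac12$ in the upper/lower far field outside a large ball, so that $\nabla\fp_\infty$ is smooth and compactly supported, and write $\fp=\fp_\infty+\fq$. The functional $\bw\mapsto a(\bfv,\bw)+\tfrac1{\rho_0}\overline{\fj[\bw]}$ vanishes on all divergence-free fields, so by de~Rham there is $\fq\in\Ltwoloc(\widehat{\Omega})$, unique up to an additive constant, with $\tfrac1{\rho_0}\nabla\fq$ representing it; thus $(\bfv,\fp_\infty+\fq)$ solves \eqref{eq:canonical_problem} in the distributional sense and $\fp:=\fp_\infty+\fq\in\Ltwoloc(\widehat{\Omega})$. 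To recover the limits, note that far from the neck $\bfv$ solves $-\imath\omega\bfv-\nu_0\Laplace\bfv=-\tfrac1{\rho_0}\nabla\fp$ with $\bfv\in\Hone$, and the complex mass term forces decay of $\bfv$, whence $\nabla\fp\to\zerobf$ and $\fp$ tends to constants $P_\pm$ on the two ends. Integrating the recovered momentum equation by parts against $\bw\in V$ and comparing with the weak form gives $(P_+-P_-)\,\overline{\fj[\bw]}=\overline{\fj[\bw]}$; since $V$ contains fields of nonzero flux, $P_+-P_-=1$, and fixing the free constant by $P_+=\tfrac12$ returns the prescribed conditions. Uniqueness then follows from coercivity for $\bfv$ and from de~Rham plus the far-field normalisation for $\fp$.

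The essential difficulty is concentrated in the unbounded geometry of the pressure step: justifying the de~Rham representation and the integrations by parts up to infinity, and above all establishing the decay of $\bfv$ in the two half-spaces that underlies both the existence of the limits $P_\pm$ and the identity fixing $P_+-P_-$. I would obtain this decay from interior and boundary Stokes regularity away from the neck, combined with the finiteness of $\norm{\bfv}{\Hone}$ and the sign structure of the $-\imath\omega$ term, and would separately verify that $\fj[\cdot]$ is genuinely independent of the chosen cross-section for fields in $V$, which is what legitimises the functional $\ell$ and the evaluation of the boundary term at infinity.
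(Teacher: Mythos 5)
Your overall architecture is genuinely different from the paper's. The paper lifts the far-field pressure condition with an explicit cut-off $\Theta$ (equal to $\pm\tfrac12$ in the two half-spaces, interpolated linearly across the neck), seeks the remainder $\tilde{\fp}=\fp-\Theta$ in the \emph{global} space $\Ltwo(\widehat{\Omega})$, and solves the resulting mixed velocity--pressure system by Brezzi's saddle-point theory, the key hypotheses being ellipticity of $a$ on all of $(\Honezero(\widehat{\Omega}))^3$ and surjectivity of the divergence operator onto $\Ltwo(\widehat{\Omega})$. You instead restrict to the divergence-free subspace, encode the pressure datum through the flux functional $\fj[\cdot]$, apply Lax--Milgram, and recover the pressure by de~Rham. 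The coercivity observation you make (the $-\imath\omega$ term supplies the $\Ltwo$ part of the $\Hone$ norm, so no Poincar\'e inequality is needed on the unbounded domain) is exactly the paper's ellipticity claim, and your right-hand side $-\tfrac{1}{\rho_0}\overline{\fj[\bw]}$ agrees with the paper's $\ell(\bw)=\tfrac{1}{\rho_0}\int\nabla\Theta\cdot\overline{\bw}$ on divergence-free fields, since $\nabla\Theta$ is supported in the neck slab and integrates to the cross-sectional flux. Your route has the advantage of not needing the global inf-sup condition on the unbounded domain $\widehat{\Omega}$ (which the paper asserts without proof); its cost appears in the pressure recovery.

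That cost is where your argument has a genuine gap, and it is not merely the technical polishing you describe at the end. The de~Rham step only delivers $\fq\in\Ltwoloc(\widehat{\Omega})$ with no decay information whatsoever, so at that stage nothing guarantees that $\fp=\fp_\infty+\fq$ has \emph{any} limits on $\widehat{\Gamma}_\pm(S)$ as $S\to\infty$, let alone the limits $\pm\tfrac12$; an $\Ltwoloc$ primitive of a given gradient field on a half-space can a priori grow. Your proposed repair --- decay of $\bfv$ from Stokes regularity and the sign of the $-\imath\omega$ term, then an integration by parts over $\widehat{\Omega}(S)$ with $S\to\infty$ to extract $P_+-P_-=1$ --- is precisely the analytic content of the statement, and it is sketched rather than proved; in particular the passage to the limit in the boundary terms on $\widehat{\Gamma}_\pm(S)$ requires quantitative decay of $\fp\,\overline{\bw}\cdot\bn$ and $\partial_{\bn}\bfv\cdot\overline{\bw}$ that you have not established. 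The paper's formulation avoids this entirely: by demanding $\fp-\Theta\in\Ltwo(\widehat{\Omega})$ from the outset, the far-field behaviour is built into the trial space and comes out of Brezzi's theorem together with existence, with no separate asymptotic analysis. If you want to keep your Lax--Milgram/de~Rham route, the clean fix is to borrow the paper's lifting: show that the functional $\bw\mapsto a(\bfv,\bw)-\ell(\bw)$ annihilates $V$ and is continuous on $(\Honezero(\widehat{\Omega}))^3$, and then invoke the closed-range property of the divergence to represent it by some $\tilde{\fp}\in\Ltwo(\widehat{\Omega})$ globally rather than only locally; the membership $\fp-\Theta\in\Ltwo(\widehat{\Omega})$ then substitutes for, and gives a precise meaning to, the limits at infinity.
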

\begin{proof}
    First we lift the condition for the pressure at infinity with a
    cut-off function
    \begin{equation*}
    \begin{aligned}
    \Theta(\bz) &= 
    \begin{cases}
    \tfrac{1}{2}, & z_3 > 0,\\
    \tfrac{1}{2}+\frac{z_3}{h_0}, & 0 > z_3 > -h_0,\\
    -\tfrac{1}{2}, & \text{otherwise}
    \end{cases}
    \end{aligned}
    \end{equation*}
    We decompose then the pressure $\fp$ as $\fp = \Theta + \tilde{\fp}$ and seek
    $\tilde{\fp}$ in the classical space $\Ltwo(\widehat{\Omega})$.  The
    variational formulation associated to~(\ref{eq:canonical_problem}) is: find
    $(\bfv,\tilde{\fp}) \in (\Honezero(\widehat{\Omega}))^3 \times
    \Ltwo(\widehat{\Omega})$
    such that for any
    $(\bfw,\fq) \in (\Honezero(\widehat{\Omega}))^3 \times
    \Ltwo(\widehat{\Omega})$,
    \begin{equation}
    \label{eq:canonical_problem:var}
    \begin{aligned}
    a(\bfv,\bfw) + b(\tilde{\fp},\bfw) & = \ell(\bfw), \\
    b(\fq,\bfv) & = 0,
    \end{aligned}
    \end{equation}
    where the sesquilinear forms $a$ and $b$ are given by
    \begin{equation*}
    \begin{aligned}
    a(\bfv,\bfw) & := - \imath \omega \int_{\widehat{\Omega}} \bfv \cdot \overline{\bfw}\,\text{d}\bz +
    \nu_0 \int_{\widehat{\Omega}} \nabla \bfv : \nabla \overline{\bfw}\,\text{d}\bz , &
    b(\fq,\bfv) & := -\tfrac{1}{\rho_0} \int_{\widehat{\Omega}} \fq \Div \overline{\bfv}\,\text{d}\bz,
    \end{aligned}
    \end{equation*}
    and the antilinear form $\ell$ by
    \begin{equation*}
    \ell(\bfw) := \tfrac{1}{\rho_0} \int_{\widehat{\Omega}} \nabla
    \Theta \cdot \overline{\bfw}\,\text{d}\bz\ .
    \end{equation*}
    The formulation~\eqref{eq:canonical_problem:var} has a saddle-point
    structure.  The sesquilinear form $a$ is continuous and elliptic on
    $(\Honezero(\widehat{\Omega}))^3$. The sesquilinear form $b$ defines a
    surjective operator $B : (\Honezero(\Omega))^3 \to \Ltwo(\Omega)$ by
    \begin{equation*}
    \left<B\bfv, \fq \right>_{\widehat{\Omega}}
    = b(\fq,\bfv), \quad \forall (\bfv,\fq) \in (\Honezero(\Omega))^3\times \Ltwo(\Omega),
    \end{equation*}
    with closed range.
    
    Following the theory of saddle-point problems\cite{BrezziFortinBook} and in
    view of Theorem~1.1 of the works of Brezzi\cite{brezzi1974existence},
    problem~(\ref{eq:canonical_problem:var}) admits a unique solution
    $(\bfv,\tilde{\fp}) \in (\Honezero(\widehat{\Omega}))^3 \times
    \Ltwo(\widehat{\Omega})$,
    and, hence, (\ref{eq:canonical_problem}) has a unique solution.
\end{proof}

Following the
formulation of the Rayleigh conductivity
$K_R$\cite{Rayleigh:1870,Rayleigh:1945} which describes the ratio of
the fluctuating volume flow to the driving pressure difference, we
introduce the \emph{effective Rayleigh conductivity}
$k_R$\cite{Bendali.Fares.Piot.Tordeux:2012,Schmidt.Semin.ThoensZueva.Bake:2018} as
\begin{equation}
\label{eq:KR}
k_R := \lim_{S \to \infty} \frac{\imath \omega \rho_0}{2} \Big( \int_{\widehat{\Gamma}_+(S)}
\bfv \cdot \bn - \int_{\widehat{\Gamma}_-(S)}
\bfv \cdot \bn \Big).
\end{equation}

\begin{prop}
    \label{propostion:effective_behaviour_kR}
    The above defined effective Rayleigh conductivity $k_R$ is well defined, \ie the integral of the normal flux of $\bfv$ on
    $\widehat{\Gamma}_\pm(S)$ tends to a finite, non-zero quantity as $S$ tends to
    infinity. Moreover $\Re(k_R) > 0$ and $\Im(k_R) < 0$.
\end{prop}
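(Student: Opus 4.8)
The plan is to reduce the whole statement to a single energy identity obtained by testing the momentum equation in \eqref{eq:canonical_problem} with $\overline{\bfv}$, after first checking that the two fluxes entering the definition \eqref{eq:KR} of $k_R$ do not depend on $S$. For the latter I would fix $S$ and apply the divergence theorem to $\bfv$ on the upper component of $\widehat{\Omega}(S)$, that is the region bounded by the hemisphere $\widehat{\Gamma}_+(S)$, the rigid walls, and the aperture cross-section at $z_3=0$. Since $\Div\bfv=0$ inside and $\bfv=\zerobf$ on the walls, the outgoing flux through $\widehat{\Gamma}_+(S)$ equals the (finite, because $\bfv\in(\Honezero(\widehat{\Omega}))^3$) flux through the aperture, hence is independent of $S$; the same holds for $\widehat{\Gamma}_-(S)$. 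Applying the divergence theorem once more to all of $\widehat{\Omega}(S)$ gives, with the outward normal, $\int_{\widehat{\Gamma}_+(S)}\bfv\cdot\bn=-\int_{\widehat{\Gamma}_-(S)}\bfv\cdot\bn=:Q$, so $k_R=\imath\omega\rho_0 Q$ is well defined and it only remains to locate $Q$.

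Next I would test \eqref{eq:canonical_problem} with $\overline{\bfv}$ on $\widehat{\Omega}(S)$ and integrate by parts. Using $\Div\bfv=0$ in the pressure term and $\bfv=\zerobf$ on the walls in both boundary integrals, every wall contribution drops and only the hemispheres survive:
\begin{equation*}
-\imath\omega\,\norm{\bfv}{\Ltwo(\widehat{\Omega}(S))}^2 + \nu_0\,\norm{\nabla\bfv}{\Ltwo(\widehat{\Omega}(S))}^2 + \tfrac{1}{\rho_0}\int_{\widehat{\Gamma}_\pm(S)}\fp\,\overline{\bfv}\cdot\bn - \nu_0\int_{\widehat{\Gamma}_\pm(S)}(\partial_{\bn}\bfv)\cdot\overline{\bfv} = 0,
\end{equation*}
where $\widehat{\Gamma}_\pm(S)$ abbreviates the union of the two hemispheres. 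The two volume terms converge to $\norm{\bfv}{\Ltwo(\widehat{\Omega})}^2$ and $\norm{\nabla\bfv}{\Ltwo(\widehat{\Omega})}^2$ as $S\to\infty$ because $\bfv\in(\Honezero(\widehat{\Omega}))^3$.

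The hard part is identifying the two boundary integrals in the limit. For the pressure term I would use the decomposition $\fp=\Theta+\tilde{\fp}$ from the proof of Proposition~\ref{prop:well_posedness_aperture_problem}: on the hemispheres one has $\Theta\equiv\pm\tfrac12$ \emph{exactly}, so $\tfrac{1}{\rho_0}\int_{\widehat{\Gamma}_\pm(S)}\Theta\,\overline{\bfv}\cdot\bn=\tfrac{1}{\rho_0}\overline{Q}$ for every $S$, while the remainder $\tfrac{1}{\rho_0}\int_{\widehat{\Gamma}_\pm(S)}\tilde{\fp}\,\overline{\bfv}\cdot\bn$ and the viscous term $\nu_0\int_{\widehat{\Gamma}_\pm(S)}(\partial_{\bn}\bfv)\cdot\overline{\bfv}$ must be shown to vanish. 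Since $\tilde{\fp},\bfv,\nabla\bfv\in\Ltwo(\widehat{\Omega})$, writing the tail energies in spherical shells yields $\int_{S_0}^\infty g(S)\,\mathrm{d}S<\infty$ for $g(S):=\int_{\widehat{\Gamma}_\pm(S)}(|\tilde{\fp}|^2+|\bfv|^2+|\nabla\bfv|^2)\,\mathrm{d}\sigma$, hence $\liminf_{S\to\infty}g(S)=0$; along a sequence $S_n\to\infty$ with $g(S_n)\to0$, Cauchy--Schwarz forces both remainder terms to vanish. This is the main obstacle, and the subsequence argument disposes of it without needing a full far-field (Brinkman-type) expansion of the solution.

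Passing to the limit along $S_n$ then gives $-\imath\omega\,\norm{\bfv}{\Ltwo(\widehat{\Omega})}^2+\nu_0\,\norm{\nabla\bfv}{\Ltwo(\widehat{\Omega})}^2+\tfrac{1}{\rho_0}\overline{Q}=0$, whence $k_R=\imath\omega\rho_0 Q=\omega^2\rho_0^2\,\norm{\bfv}{\Ltwo(\widehat{\Omega})}^2-\imath\,\omega\nu_0\rho_0^2\,\norm{\nabla\bfv}{\Ltwo(\widehat{\Omega})}^2$. Reading off real and imaginary parts gives $\Re(k_R)=\omega^2\rho_0^2\,\norm{\bfv}{\Ltwo(\widehat{\Omega})}^2\ge0$ and $\Im(k_R)=-\omega\nu_0\rho_0^2\,\norm{\nabla\bfv}{\Ltwo(\widehat{\Omega})}^2\le0$. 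To upgrade these to strict inequalities I would rule out degeneracy: if $\bfv\equiv\zerobf$ then $\nabla\fp=\zerobf$, so $\fp$ is constant, contradicting $\fp\to\pm\tfrac12$ at infinity; hence $\bfv\neq\zerobf$ and, being in $\Honezero(\widehat{\Omega})$, also $\nabla\bfv\neq\zerobf$. This yields $\Re(k_R)>0$, $\Im(k_R)<0$, and in particular $k_R\neq0$.
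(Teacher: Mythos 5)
Your proof is correct and arrives at exactly the same energy identity as the paper --- testing the momentum equation with the velocity over $\widehat{\Omega}(S)$, integrating by parts, and letting $S\to\infty$ to obtain $k_R=\omega^2\rho_0^2\xnorm{\bfv}{\Ltwo(\widehat{\Omega})^3}^2-\imath\,\omega\nu_0\rho_0^2\xnorm{\nabla\bfv}{\Ltwo(\widehat{\Omega})}^2$ --- but you justify the passage to the limit in the boundary terms by a genuinely different route. The paper identifies the limits of $\int_{\widehat{\Gamma}_\pm(S)}\fp\,\overline{\bfv}\cdot\bn$ and of the viscous boundary term by appealing to spherical-harmonic (far-field) expansions of $\fp$ and $\bfv$, and it asserts well-definedness of $k_R$ from the same expansion. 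You instead (i) get $S$-independence of the fluxes for free from $\Div\bfv=0$ and the no-slip condition via the divergence theorem on each half of $\widehat{\Omega}(S)$, which settles well-definedness without any asymptotic analysis; (ii) evaluate the pressure boundary term exactly by splitting $\fp=\Theta+\tilde{\fp}$ with the lifting $\Theta$ already introduced in the proof of Proposition~\ref{prop:well_posedness_aperture_problem}, so that the $\Theta$-part contributes $\tfrac{1}{\rho_0}\overline{Q}$ for every $S$; and (iii) kill the remaining boundary terms along a subsequence $S_n$ via the shell-integrability/$\liminf$ argument, which is legitimate since the surviving terms (constant fluxes, monotone volume integrals) converge for the full limit anyway. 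Your approach is more elementary and self-contained, at the price of only controlling the remainder terms along a subsequence rather than describing the actual decay; the paper's far-field expansion gives more structural information (used elsewhere, e.g.\ in \eqref{eq:definition_functions_aperture}) but is left largely implicit in the proof. You also make explicit the non-degeneracy step ($\bfv\not\equiv\zerobf$ because otherwise $\fp$ would be constant, contradicting the condition at infinity), which the paper needs for the strict inequalities but does not spell out.
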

\begin{proof}
    Taking the conjugate of~\eqref{eq:canonical_NNF:1} leads to
    \begin{equation*}
    \imath \omega \overline{\bfv} + \tfrac{1}{\rho_0} \nabla_\bz \overline{\fp}
    - \nu_0 \Delta_\bz \overline{\bfv} = \zerobf.
    \end{equation*}  
    Multiplying this equation with $\bfv$ and integrating over the domain $\widehat{\Omega}(S)$ leads to
    \begin{equation}
    \label{eq:proof_IPP_Omega_AS}
    \imath \omega \xnorm{\bfv}{\Ltwo(\widehat{\Omega}(S))}^2
    + \tfrac{1}{\rho_0} \xscal{\nabla_\bz \overline{\fp}}{\bfv}{\widehat{\Omega}(S)}
    - \nu_0 \xscal{\Delta_\bz \overline{\bfv}}{\overline{\bfv}}{\widehat{\Omega}(S)} = 0.
    \end{equation}
    Let us treat now the two scalar product terms using the Gauss' theorem. Using~\eqref{eq:canonical_NNF:2}
    leads to
    \begin{equation*}
    \xscal{\nabla_\bz \overline{\fp}}{\bfv}{\widehat{\Omega}(S)} = \xscal{\overline{\fp}}{\bfv \cdot \bn}{ \partial \widehat{\Omega}(S)}
    \end{equation*}
    Using~\eqref{eq:canonical_NNF_infinity} (which is also valid for the conjugate complex),
    the definition of the effective Rayleigh conductivity stated by~\eqref{eq:KR} and the spherical
    harmonic decompositions of $\fp$ and $\bfv$ leads to
    \begin{equation*}
    \lim_{S \to \infty} \xscal{\nabla_\bz \overline{\fp}}{\bfv}{\widehat{\Omega}(S)} = \frac{k_R}{\imath \omega \rho_0}
    \end{equation*}
    Similarly, the study of the $\Ltwo$-scalar product between $\Delta_\bz \overline{\bfv}$ and $\bfv$
    leads to
    \begin{equation*}
    - \lim_{S \to \infty}  \xscal{\Delta_\bz \overline{\bfv}}{\overline{\bfv}}{\widehat{\Omega}(S)}
    = \xnorm{\nabla \bfv}{\Ltwo(\widehat{\Omega})}^2
    \end{equation*}
    Taking then the limit in~\eqref{eq:proof_IPP_Omega_AS} as $S \to \infty$ leads to
    \begin{equation*}
    \imath \omega \xnorm{\bfv}{\Ltwo(\widehat{\Omega}(S))}^2
    + \nu_0 \xnorm{\nabla \bfv}{\Ltwo(\widehat{\Omega}(S))}^2 =
    - \frac{k_R}{\imath \omega \rho_0^2} = \frac{1}{\omega \rho_0^2} \big( - \Im(k_R) + \imath \Re(k_R) \big)
    \end{equation*}
    and therefore $\Re(k_R) > 0$ and $\Im(k_R) < 0$.
\end{proof}

\subsection{Weak convergence to a limit problem with impedance boundary conditions}

As $\delta$ tends to $0$, we expect that the solution
$(\bv^\delta,p^\delta)$ tends to a finite, non-trivial limit solution
$(\bv_0,p_0)$ in the half-space $\Omega$, and we expect this limit
term to be solution of an inviscid Helmholtz problem posed on
$\Omega$.

\begin{defi}[Limit problem]
    \label{defi:limit_problem}
    We define the limit problem $(\bv_0,p_0)$ as solution of
    \begin{equation}
    \label{eq:limit_term_complete}
    \begin{aligned}
    - \imath \omega \bv_0 + \tfrac{1}{\rho_0} \nabla p_0 & = \bff,
    &&
    \text{in } \Omega, \\
    - \imath \omega p_0 + \rho_0 c^2 \Div \bv_0 & = 0, && \text{in
    }\Omega, \\
    \Big( \tfrac{\imath c \rho_0}{a_C} \cos \big(\tfrac{\omega L}{c}\big) -
    \tfrac{\imath \omega \rho_0}{k_R} \sin \big(\tfrac{\omega L}{c}\big) \Big)
    \bv_0 \cdot \bn -  \sin \big(\tfrac{\omega L}{c} \big)
    p_0 & = 0, && \text{on }\Gamma, \\
    \bv_0 \cdot \bn & = 0, && \text{on }\partial \Omega \setminus
    \Gamma.
    \end{aligned}
    \end{equation}
\end{defi}

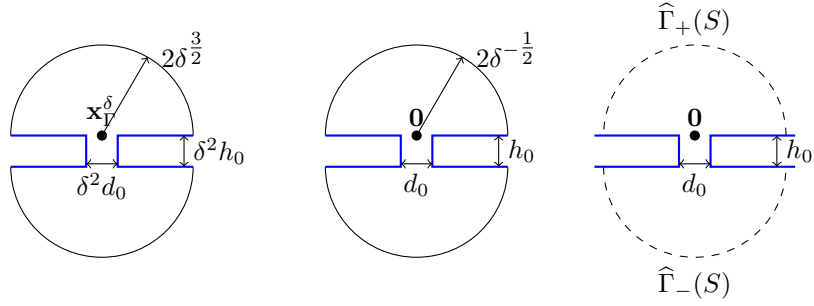
\begin{figure}[bt]
    \null\hfill
    \begin{tikzpicture}[baseline, scale=0.6]
    \draw[white] (-2.5,-3) rectangle (2.5,3);
    \draw (2,0.345) arc(0:180:2);
    \draw (-2,-0.345) arc(180:360:2);
    \draw[blue, thick] (-2,0.345) -- (-0.346,0.345) -- (-0.346,-0.345) -- (-2,-0.345);
    \draw[blue, thick] (2,0.345) -- (0.346,0.345) -- (0.346,-0.345) --
    (2,-0.345);
    \draw[->] (0,0.345) ++ (0,0) -- ++ (60:2) node [pos=1.1,right]
    {$2\delta^{\tfrac{3}{2}}$}; 
    \draw[<->] (1.8,-0.345) -- (1.8,0.345) node[pos=0.5, right]
    {$\delta^2 h_0$};
    \draw[<->] (-0.346,-0.28) -- (0.346,-0.28) node[pos=0.5, below]
    {$\delta^2 d_0$};
    \filldraw (0,0.345) circle (0.1) node [above] {$\bx^\delta_\Gamma$};
    \node at (0,-4) {\small (a) Extended neck domain $\Omega^\delta_A(\bx^\delta_\Gamma)$};
    \end{tikzpicture}
    \hfill
    \begin{tikzpicture}[baseline, scale=0.6]
    \draw[white] (-2.5,-3) rectangle (2.5,3);
    \draw (2,0.345) arc(0:180:2);
    \draw (-2,-0.345) arc(180:360:2);
    \draw[blue, thick] (-2,0.345) -- (-0.346,0.345) -- (-0.346,-0.345) -- (-2,-0.345);
    \draw[blue, thick] (2,0.345) -- (0.346,0.345) -- (0.346,-0.345) --
    (2,-0.345);
    \draw[->] (0,0.345) ++ (0,0) -- ++ (60:2) node [pos=1.1,right]
    {$2\delta^{-\tfrac{1}{2}}$}; 
    \draw[<->] (1.8,-0.345) -- (1.8,0.345) node[pos=0.5, right]
    {$h_0$};
    \draw[<->] (-0.346,-0.28) -- (0.346,-0.28) node[pos=0.5, below]
    {$d_0$};
    \filldraw (0,0.345) circle (0.1) node [above] {$\zerobf$};
    \node at (0,-4) {\small (b) Rescaled extended};
    \node at (0,-4.6) {\small neck domain $\widehat{\Omega}(2\delta^{-1/2})$};
    \end{tikzpicture}
    \hfill
    \begin{tikzpicture}[baseline, scale=0.6]
    \draw[white] (-2.5,-3) rectangle (2.5,3);
    \draw[dashed] (2,0.345) arc(0:180:2) node[pos=0.5, above] {$\widehat{\Gamma}_+(S)$};
    \draw[dashed] (-2,-0.345) arc(180:360:2) node[pos=0.5, below] {$\widehat{\Gamma}_-(S)$};
    \draw[blue, thick] (-2.2,0.345) -- (-0.346,0.345) -- (-0.346,-0.345) -- (-2.2,-0.345);
    \draw[blue, thick] (2.2,0.345) -- (0.346,0.345) -- (0.346,-0.345) --
    (2.2,-0.345);
    \draw[<->] (1.8,-0.345) -- (1.8,0.345) node[pos=0.5, right]
    {$h_0$};
    \draw[<->] (-0.346,-0.28) -- (0.346,-0.28) node[pos=0.5, below]
    {$d_0$};
    \filldraw (0,0.345) circle (0.1) node [above] {$\zerobf$};
    \node at (0,-4) {\small (c) Limit rescaled extended};
    \node at (0,-4.6) {\small neck domain $\widehat{\Omega}$};
    \end{tikzpicture}
    \hfill\null
    \caption{An extended neck domain in the original coordinates $\bx$ (left), in
        rescaled coordinates $\bz := \delta^{-2} (\bx-\bx^\delta_\Gamma)$ (middle)
        and the limit of the latter when $\delta \to 0$ (right).}
    \label{fig:NNF}
\end{figure}

Note that, problem~(\ref{eq:limit_term_complete}) is equivalent to a problem for
the limit pressure $p_0 \in \Hone(\Omega)$ only, that is given by
\begin{equation}
\label{eq:limit_term_complete_pressure}
\begin{aligned}
\Delta p_0 + \tfrac{\omega^2}{c^2} p_0 & = \rho_0 \Div \bff, &&
\text{in } \Omega, \\
\Big( \tfrac{c}{\omega a_C} \cos \big(\tfrac{\omega L}{c}\big) -
\tfrac{1}{k_R} \sin \big(\tfrac{\omega L}{c}\big) \Big) \nabla p_0 \cdot \bn
-  \sin \big(\tfrac{\omega L}{c}\big) 
p_0 & = 0, && \text{on }\Gamma, \\
\nabla p_0 \cdot \bn & = 0, && \text{on }\partial \Omega \setminus
\Gamma,
\end{aligned}
\end{equation}
where
$\bv_0 := \tfrac{\imath}{\omega} \big( \bff - \tfrac{1}{\rho_0} \nabla p_0
\big)$
follows. It is also equivalent to a problem for the limit velocity
$\bv_0 \in \Hs(\Div,\Omega) \cap \Hs(\Curl,\Omega)$ only that is
\begin{equation}
\label{eq:limit_term_complete_velocity}
\begin{aligned}
\nabla \Div \bv_0 + \tfrac{\omega^2}{c^2} \bv_0 & = \tfrac{\imath
    \omega}{c^2} \bff, && \text{in } \Omega, \\
\Curl \bv_0 & = - \tfrac{1}{\imath \omega} \Curl \bff, && \text{in }
\Omega,
\\
\Big( \tfrac{c}{ \omega a_C} \cos \big(\tfrac{\omega L}{c}\big) -
\tfrac{1}{k_R} \sin \big(\tfrac{\omega L}{c}\big) \Big) \bv_0 \cdot \bn
\\ -
\Big( \tfrac{c^2}{\omega^2} \sin
\big(\tfrac{\omega L}{c}\big) \Big) \Div \bv_0 & = 0, && \text{on }\Gamma, \\
\bv_0 \cdot \bn & = 0, && \text{on }\partial \Omega \setminus
\Gamma,
\end{aligned}
\end{equation}
where $p_0 := \tfrac{-\imath \rho_0 c^2}{\omega} \Div \bv_0$ follows.

The nature of the boundary condition on $\Gamma$ depends on the value of $\sin
\tfrac{\omega L}{c}$
\begin{enumerate}
    \item If $\sin \big( \tfrac{\omega L}{c} \big) = 0$, \ie if
    $\tfrac{\omega}{c}$ corresponds to a characteristic wavelength of the
    one-dimensional Helmholtz problem in a domain of size $L$, then the
    boundary condition on $\Gamma$ becomes $\bv_0 \cdot \be_3 = 0$, therefore
    $\bv_0 \cdot \bn = 0$ on the whole boundary $\partial \Omega$.
    \item \emph{A contrario}, if $\sin \big( \tfrac{\omega L}{c} \big) \not= 0$, the
    right-hand side does not vanish, and denoting here by $\bn := - \be_3$
    the unit outward normal vector on $\Gamma$, one gets the impedance
    condition
    \begin{equation}
    \label{eq:impedance_bc} \Big( - \tfrac{\imath c}{a_C(\bx_\Gamma)}
    \cot \big( \tfrac{\omega L}{c} \big) + \tfrac{\imath \omega}{k_R(\bx_\Gamma)}
    \Big) \bv_0(\bx_\Gamma) \cdot \bn + \tfrac{1}{\rho_0} p_0(\bx_\Gamma)
    = 0,
    \end{equation}
    and this equation gives an acoustic impedance $Z(\omega)$ of the same
    nature as the one derived by Rienstra and Singh\cite[Eq.~(14)]{Rienstra.Singh:2018}.
\end{enumerate}
We give the existence and uniqueness result of the limit problem:
\begin{lema}[Existence and uniqueness of the limit problem]
    \label{lema:well-posedness_limit}
    Let $\bff \in \Hs(\Div,\Omega)$. 
    Then, the limit problem~\eqref{eq:limit_term_complete} is well-posed, \ie
    admits a unique solution
    $(\bv_0,p_0) \in \Hs(\Div,\Omega) \times \Hone(\Omega)$, except for
    frequencies $\omega \in \Lambda$, where $\Lambda$ is a subset of
    $\tfrac{\pi c}{L} \IN$.
\end{lema}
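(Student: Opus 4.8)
The plan is to work with the equivalent pressure-only formulation~\eqref{eq:limit_term_complete_pressure}, whose equivalence to~\eqref{eq:limit_term_complete} is already recorded above, and to settle it by the Fredholm alternative applied to its variational form. Write the impedance coefficient on $\Gamma$ as
$$A := \tfrac{c}{\omega a_C}\cos\big(\tfrac{\omega L}{c}\big) - \tfrac{1}{k_R}\sin\big(\tfrac{\omega L}{c}\big),$$
so that the boundary condition reads $A\,\nabla p_0\cdot\bn = \sin\big(\tfrac{\omega L}{c}\big)\,p_0$ on $\Gamma$. Testing the Helmholtz equation against $q\in\Hone(\Omega)$, integrating by parts and using $\nabla p_0\cdot\bn = 0$ on $\partial\Omega\setminus\Gamma$, I obtain the sesquilinear form
$$a(p_0,q) := \int_\Omega \nabla p_0\cdot\overline{\nabla q}\,\text{d}\bx - \int_\Gamma \tfrac{\sin(\omega L/c)}{A}\, p_0\,\overline{q}\,\text{d}s - \tfrac{\omega^2}{c^2}\int_\Omega p_0\,\overline{q}\,\text{d}\bx$$
on $\Hone(\Omega)$. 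First I would check that $A\neq 0$ whenever $\sin\big(\tfrac{\omega L}{c}\big)\neq 0$: its imaginary part equals $-\sin\big(\tfrac{\omega L}{c}\big)\,\Im(1/k_R)$, which is nonzero because $\Im(k_R)<0$ from Proposition~\ref{propostion:effective_behaviour_kR} gives $\Im(1/k_R)>0$.

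Next I would establish that $a$ is of Gårding type. Splitting $a(p,q) = (p,q)_{\Hone(\Omega)}$ minus the remaining terms, the order-zero $\Ltwo(\Omega)$ contribution is a compact perturbation by the Rellich embedding $\Hone(\Omega)\hookrightarrow\Ltwo(\Omega)$, and the boundary term is compact since the trace map $\Hone(\Omega)\to\Honehalf(\Gamma)$ composed with the compact embedding $\Honehalf(\Gamma)\hookrightarrow\Ltwo(\Gamma)$ is compact. Hence the operator associated with $a$ is a compact perturbation of the identity and, by the Fredholm alternative, problem~\eqref{eq:limit_term_complete_pressure} is well-posed if and only if the homogeneous problem ($\bff=0$) admits only the trivial solution.

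The core step is the uniqueness analysis. Setting $\bff=0$ and $q=p_0$ gives $a(p_0,p_0)=0$; since $\int_\Omega|\nabla p_0|^2$ and $\tfrac{\omega^2}{c^2}\int_\Omega|p_0|^2$ are real, taking the imaginary part yields $\Im\big(\tfrac{\sin(\omega L/c)}{A}\big)\int_\Gamma|p_0|^2\,\text{d}s = 0$. A direct computation gives $\Im\big(\tfrac{\sin(\omega L/c)}{A}\big) = \sin^2\big(\tfrac{\omega L}{c}\big)\,\Im(1/k_R)/|A|^2$, which is strictly positive when $\sin\big(\tfrac{\omega L}{c}\big)\neq 0$, again using $\Im(k_R)<0$. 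Therefore $p_0|_\Gamma = 0$, and the impedance relation then forces $\nabla p_0\cdot\bn = 0$ on $\Gamma$ as well. Thus $p_0$ solves the Helmholtz equation in $\Omega$ with vanishing Cauchy data on the open piece $\Gamma$ of $\partial\Omega$; since the operator has constant coefficients I would extend $p_0$ by zero across $\Gamma$ and invoke unique continuation (Holmgren's theorem together with interior analyticity) to conclude $p_0\equiv 0$ on the connected domain $\Omega$.

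Finally, when $\sin\big(\tfrac{\omega L}{c}\big)=0$, i.e.\ $\omega\in\tfrac{\pi c}{L}\IN$, the coefficient $\sin(\omega L/c)/A$ vanishes and the impedance condition degenerates to the homogeneous Neumann condition on $\Gamma$, so the problem reduces to the pure Neumann Helmholtz problem on $\Omega$, which has nontrivial solutions exactly when $\omega^2/c^2$ is a Neumann eigenvalue of $-\Laplace$. Collecting these exceptional frequencies into $\Lambda$ yields $\Lambda\subset\tfrac{\pi c}{L}\IN$. I expect the unique-continuation argument to be the only genuinely delicate point, the remaining steps being the standard Gårding-plus-Fredholm scheme.
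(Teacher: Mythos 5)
Your proof is correct and follows essentially the same route as the paper's: the variational formulation for the pressure, the Fredholm alternative via the Rellich embedding and trace compactness, uniqueness from the imaginary part of the boundary term using $\Im(k_R)<0$, unique continuation from vanishing Cauchy data on $\Gamma$, and the Neumann-eigenvalue characterization of $\Lambda$ when $\sin(\omega L/c)=0$. Your explicit computation of $\Im\big(\sin(\omega L/c)/A\big)$ is in fact slightly cleaner than the paper's argument, which contains a sign slip (it writes $\Im(k_R)>0$, contradicting Proposition~\ref{propostion:effective_behaviour_kR}) and a stray factor of $2$ in the denominator of $b_\Gamma$.
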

This lemma will be proved later in Section~\ref{sec:uniqueness_limit}.

We give now the main theoretical result of this paper.
\begin{theo}[Weak convergence to the limit problem]
    \label{theo:limit_term}
    Let $\omega \not\in\Lambda$ with the set $\Lambda$ in Lemma~\ref{lema:well-posedness_limit},
    $\bff \in \Hs(\Div,\Omega)$ and there exist two constants $C_\Omega > 0$ and $\delta_0 > 0$ such that 
    for all $\delta \in (0, \delta_0)$ it holds for the solution 
    $(\bv^\delta, p^\delta)$ of~\eqref{eq:Navier_Stokes}
    \begin{equation}
    \label{eq:universal_estimate}
    \xnorm{\bv^\delta}{\Hs(\Div, \Omega^\delta)} + \delta^2 \xnorm{\Curl
        \bv^\delta}{\Ltwo(\Omega^\delta)^3} +
    \xnorm{p^\delta}{\Hone(\Omega^\delta)}\leqslant C_\Omega.
    \end{equation}
    Then $(\bv^\delta,p^\delta)$ converges weakly in $\Hs(\Div, \Omega)
    \times \Hone(\Omega)$ to the solution $(\bv_0,p_0)$ of~\eqref{eq:limit_term_complete}.
\end{theo}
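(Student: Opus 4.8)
The plan is to combine the \apriori bound~\eqref{eq:universal_estimate} with the method of two-scale convergence adapted to the three geometric scales of $\Omega^\delta$: the macroscopic scale of $\Omega$, the mesoscopic scale $\delta$ of the chambers, and the microscopic scale $\delta^2$ of the necks. First I would invoke~\eqref{eq:universal_estimate} and the weak compactness of the Hilbert spaces $\Hs(\Div,\Omega)$ and $\Hone(\Omega)$ to extract a subsequence of $(\bv^\delta,p^\delta)$, still indexed by $\delta$, converging weakly in $\Hs(\Div,\Omega)\times\Hone(\Omega)$ to a limit $(\bv_0,p_0)$, together with the associated two-scale limits in the mesoscopic and microscopic regions. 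Testing the weak formulation of~\eqref{eq:Navier_Stokes} with functions supported in $\Omega$ away from $\Gamma$, the viscous contributions are controlled by $\nu_0\delta^4$ and $\nu_0'\delta^4$ times norms bounded in~\eqref{eq:universal_estimate}; since $\nu_0\delta^4\,\xnorm{\Curl\bv^\delta}{\Ltwo(\Omega^\delta)^3}\leqslant C\delta^2$ and the $\Div$-contributions are $O(\delta^4)$, these terms drop out, so that $(\bv_0,p_0)$ satisfies the two inviscid equations of~\eqref{eq:limit_term_complete} in $\Omega$, while the rigid-wall condition passes to the limit as $\bv_0\cdot\bn=0$ on $\partial\Omega\setminus\Gamma$.

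The heart of the proof is the boundary condition on $\Gamma$, for which I would use oscillating test functions of the form $\bw(\bx)+\delta\,\bw_1\bigl(\bx,\tfrac{\bx}{\delta}\bigr)+\delta^2\,\bw_2\bigl(\bx,\tfrac{\bx}{\delta},\tfrac{\bx-\bx^\delta_\Gamma}{\delta^2}\bigr)$ that resolve all three scales and respect the resonator geometry. Near the neck of the resonator centred at $\bx^\delta_\Gamma$ I would pass to the rescaled variable $\bz=\delta^{-2}(\bx-\bx^\delta_\Gamma)$; as $\delta\to0$ the rescaled neck domain exhausts the canonical domain $\widehat{\Omega}$ of Fig.~\ref{fig:NNF}, and I expect the rescaled field to two-scale converge to $\bigl(p_0(\bx_\Gamma)-p_C(\bx_\Gamma)\bigr)\,(\bfv,\fp)$, where $(\bfv,\fp)$ is the canonical solution of~\eqref{eq:canonical_problem}, the prefactor being the local pressure jump between the bulk trace $p_0$ and the chamber-top trace $p_C$. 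The defining integral~\eqref{eq:KR} then identifies the limiting flux through a single neck, which, after accounting for the density $\delta^{-2}$ of resonators per unit area of $\Gamma$, yields the homogenised relation $\bv_0\cdot\bn=\tfrac{k_R}{\imath\omega\rho_0}\bigl(p_C(\bx_\Gamma)-p_0(\bx_\Gamma)\bigr)$.

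It remains to analyse the chamber, a thin tube of depth $L$ and cross-section $\delta\mA_C$ closed by a rigid bottom. Here I would show that its two-scale pressure limit is independent of the transverse fast variable and reduces to the one-dimensional profile $A(\bx_\Gamma)\cos\bigl(\tfrac{\omega}{c}(x_3+L)\bigr)$, the form being forced by the equation $p''+(\omega/c)^2p=0$ together with the Neumann condition $p'(-L)=0$ coming from $\bv_0\cdot\be_3=0$ at the closed end. Evaluating this profile and the induced axial velocity at the top of the chamber, and matching the chamber flux to the neck flux through the cross-sectional area $a_C$, gives the chamber input relation $p_C=\tfrac{\imath\rho_0 c}{a_C}\cot\bigl(\tfrac{\omega L}{c}\bigr)\,\bv_0\cdot\bn$. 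Eliminating $p_C$ between this and the neck relation of the previous step produces exactly $\bigl(-\tfrac{\imath c}{a_C}\cot(\tfrac{\omega L}{c})+\tfrac{\imath\omega}{k_R}\bigr)\bv_0\cdot\bn+\tfrac{1}{\rho_0}p_0=0$ on $\Gamma$, which is the third line of~\eqref{eq:limit_term_complete} in the equivalent form~\eqref{eq:impedance_bc}.

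The step I expect to be the main obstacle is the rigorous justification of this nested three-scale matching: proving that the rescaled neck field genuinely two-scale converges to a scalar multiple of the canonical solution and that the surface integrals~\eqref{eq:KR} pass to the limit demands control of the far-field spherical-harmonic behaviour of $(\bfv,\fp)$ and a careful transition between the $\delta$- and $\delta^2$-scales, while the chamber argument involves a degenerate, anisotropic unit cell whose transverse width collapses as $\delta\to0$; it is precisely here that the full strength of the \apriori estimate~\eqref{eq:universal_estimate}, in particular the weighted control $\delta^2\xnorm{\Curl\bv^\delta}{\Ltwo(\Omega^\delta)^3}$, is needed to keep the neck contributions bounded. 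Finally, since $\omega\notin\Lambda$, Lemma~\ref{lema:well-posedness_limit} ensures that~\eqref{eq:limit_term_complete} has a unique solution $(\bv_0,p_0)$; as every weakly convergent subsequence yields this same limit, a standard argument upgrades the convergence to the whole family $\delta\to0$, which completes the proof.
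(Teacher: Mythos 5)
Your plan is correct in outline and arrives at the impedance condition by the same chain of identifications the paper uses -- canonical instationary Stokes problem in the $\delta^2$-rescaled neck, flux quantified by $k_R$, one-dimensional standing wave in the chamber forced by the rigid bottom, elimination of the chamber pressure, and finally uniqueness of the limit problem (Lemma~\ref{lema:well-posedness_limit}) to upgrade subsequential to full convergence -- but the implementation differs from the paper's in one genuine respect and elides two points where the paper has to work. First, the paper never uses oscillating test functions of the form $\bw+\delta\,\bw_1+\delta^2\,\bw_2$; instead it rescales the \emph{solution} region by region (anisotropic stretching $\by=\delta^{-1}(\bx-\bx^\delta_\Gamma)$ of the transverse variable only in the chamber, isotropic $\delta$- and $\delta^2$-scalings near the aperture), extends the resonator-indexed functions to functions of a continuous surface variable $\bx_\Gamma\in\Gamma$, and passes to the limit in each rescaled weak formulation separately. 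Second, and more substantively, the paper inserts two intermediate ``pattern'' regions $\mB^\delta_\pm$ at the scale $\delta$, bounded by hemispheres of radius $\sqrt{\delta}$ in the $\by$-variable, in which the a priori bound forces $\nabla_\by\Phi^\delta_\pm\to 0$ and $\Div_\by\bPsi^\delta_\pm\to 0$, so that the pressure becomes constant and the flux is transported unchanged; these buffer zones are what actually realize the matching between the chamber and bulk traces on one side and the $\delta^2$-scale neck on the other. Your direct matching of the neck limit to $p_0$ and $p_C$ would need such a zone (or an equivalent argument), because the canonical neck problem only sees constants at infinity while $p^\delta$ in the chamber and in $\Omega$ is controlled only in $\Ltwo$-type norms; this is precisely the ``careful transition between the $\delta$- and $\delta^2$-scales'' you flag as the main obstacle, and the paper's answer to it is the pair of regions $\mB^\delta_\pm$. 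Two smaller points: the rescaled chamber domain $\mA_C\times(-L,-\sqrt{\delta})$ depends on $\delta$, and the paper needs a diagonal extraction over a second parameter $\epsi$ to obtain a limit on the whole chamber; and the neck limit is $c_\fm(\bx_\Gamma)(\zerobf,1)+c_\fj(\bx_\Gamma)(\bfv,\fp)$ rather than $\bigl(p_0-p_C\bigr)(\bfv,\fp)$ -- the additive constant $c_\fm$ is harmless for the flux (its velocity part vanishes) but is needed to recover the two traces $p_C=c_\fm-\tfrac12 c_\fj$ and $p_0=c_\fm+\tfrac12 c_\fj$ separately from the two matching relations, which is how the paper closes the system.
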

In a forthcoming article
we shall prove the estimate~\eqref{eq:universal_estimate}.

\begin{figure}[bt]
    \centering
    \null\hfill
    \includegraphics[height=4cm]{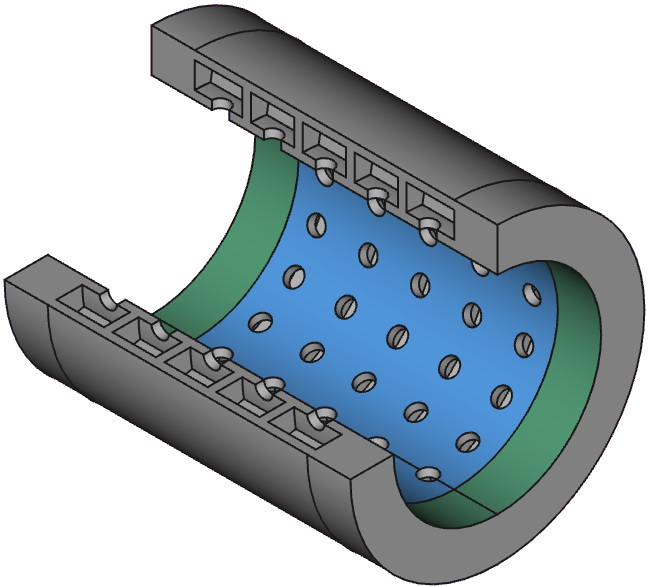}
    \hfill\null
    
    \caption{A cylindrical liner for which in the limit $\delta \to 0$ the same
        impedance boundary conditions appear as for the considered flat surface.}
    \label{fig:liners}
\end{figure}

\begin{rem}
    The following study is done on a flat interface $\Gamma$ for simplicity. For
    slow varying interfaces, the upcoming limit model can be derived
    using an appropriate variable change that flattens the surface. For
    the example of a cylindrical array of Helmholtz resonators, as it
    can be seen on Fig.~\ref{fig:liners}, such a
    variable change has been used in a previous work\cite{Schmidt.Semin.ThoensZueva.Bake:2018}.
\end{rem}

\section{Proof of the weak convergence to the limit}
\label{sec:deriv-just-main}

In this section, we derive the limit problem~(\ref{eq:limit_term_complete}) on
$\Omega$ using the two-scale convergence. To do so, we show that, up to a
subsequence, $(\bv^\delta,p^\delta)$ converges weakly in
$\Hs(\Div, \Omega) \times \Hone(\Omega)$ to a limit $(\bv_0,p_0)$ that
satisfies an Helmholtz-$\nabla \Div$ equation with radiation conditions. To
obtain this result we prove the weak convergence of $\bv^\delta$ in each
subpart of $\Omega^\delta$, \ie the domain $\Omega$ not including the interface
$\Gamma$, the array of resonator chambers, the two-semi infinites strips of the
pattern $\mB^\delta$ and the array of resonator apertures, where matching
conditions and finally impedance conditions follow.

From this stability result, we defive another \apriori error estimate on what
we will call the \emph{extended Helmholtz resonator array}. We first extend the
Helmholtz resonator $\Omega^\delta_H(\bx^\delta_\Gamma)$ centered at
$\bx^\delta_\Gamma$ into an \emph{extended Helmholtz resonator}
$\tilde{\Omega}^\delta_H(\bx^\delta_\Gamma)$ defined as
\begin{equation*}
\tilde{\Omega}^\delta_H(\bx^\delta_\Gamma) =
\Omega^\delta_H(\bx^\delta_\Gamma) \cup \big( \bx^\delta_\Gamma +
\mA \times (0, 2 \sqrt{\delta}) \big).
\end{equation*}

\begin{lema}
    \label{lema:better_estimate}
    Let $\omega \not\in \Lambda$, and let the assumption estimate~(\ref{eq:universal_estimate}) holds. 
    There exists two constants $C_H > 0$ and $\delta_0 > 0$ such that, for any
    $\delta \in (0, \delta_0)$, the estimate
    \begin{multline}
    \label{eq:lema_better_estimate}
    \sum_{\bx^\delta_\Gamma \in \Gamma^\delta}\xnorm{\bv^\delta}{\Hs(\Div,
        \tilde{\Omega}^\delta_H(\bx^\delta_\Gamma))}^2 + \sum_{\bx^\delta_\Gamma
        \in \Gamma^\delta} \delta^2 \xnorm{\Curl 
        \bv^\delta}{\Ltwo(\tilde{\Omega}^\delta_H(\bx^\delta_\Gamma))^3}^2 \\ +
    \sum_{\bx^\delta_\Gamma \in \Gamma^\delta}
    \xnorm{p^\delta}{\Hone(\tilde{\Omega}^\delta_H(\bx^\delta_\Gamma))}^2 
    \leqslant C_H.
    \end{multline}
    holds.
\end{lema}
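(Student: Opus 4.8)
The plan is to prove \eqref{eq:lema_better_estimate} by \emph{localising} the global stability bound \eqref{eq:universal_estimate}. The key observation is that the extended resonators $\tilde\Omega^\delta_H(\bx^\delta_\Gamma)$ have pairwise disjoint interiors and that their union is contained in $\Omega^\delta$; granting this, each of the three sums is \emph{additive} (the integrands of the $\Hs(\Div)$, $\Ltwo$ and $\Hone$ norms being pointwise nonnegative) and collapses to a single norm over the union, dominated by the corresponding norm over $\Omega^\delta$. First I would settle the geometry. Each $\tilde\Omega^\delta_H(\bx^\delta_\Gamma)$ splits into the resonator body $\Omega^\delta_H(\bx^\delta_\Gamma)$, lying in $\{x_3<0\}$, and the cap $\bx^\delta_\Gamma+\delta\mA\times(0,2\sqrt\delta)$, lying in $\{x_3>0\}$, so a body never meets a cap. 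The bodies are pairwise disjoint by the construction \eqref{eq:Omega_delta}. The cap footprints $\bx^\delta_\Gamma+\delta\mA$ are the translates, along the lattice $\delta\ba_1\IZ+\delta\ba_2\IZ$ carrying the centres, of the fundamental domain $\mA$ of unit area, hence tile the slab above $\Gamma$ with disjoint interiors. The whole family is therefore disjoint, with covering multiplicity one.

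Choosing $\delta_0$ so small that the slab $\{0<x_3<2\sqrt\delta\}$ over $\Gamma$ lies in the one-sided collar of $\partial\Omega$ along $\Gamma$, hence inside $\Omega\subset\Omega^\delta$, and using $\Omega^\delta_H(\bx^\delta_\Gamma)\subset\Omega^\delta$, gives $\bigcup_{\bx^\delta_\Gamma\in\Gamma^\delta}\tilde\Omega^\delta_H(\bx^\delta_\Gamma)\subset\Omega^\delta$ for $\delta\in(0,\delta_0)$. The $\Hs(\Div)$ and $\Hone$ terms are then immediate: additivity and containment give $\sum_{\bx^\delta_\Gamma}\xnorm{\bv^\delta}{\Hs(\Div,\tilde\Omega^\delta_H(\bx^\delta_\Gamma))}^2\leqslant\xnorm{\bv^\delta}{\Hs(\Div,\Omega^\delta)}^2\leqslant C_\Omega^2$ and, likewise, $\sum_{\bx^\delta_\Gamma}\xnorm{p^\delta}{\Hone(\tilde\Omega^\delta_H(\bx^\delta_\Gamma))}^2\leqslant\xnorm{p^\delta}{\Hone(\Omega^\delta)}^2\leqslant C_\Omega^2$, since both quantities enter \eqref{eq:universal_estimate} as unsquared norms.

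The weighted vorticity term is where the real care lies, and I expect it to be the main obstacle. Additivity still reduces it to $\delta^2\xnorm{\Curl\bv^\delta}{\Ltwo(\bigcup_{\bx^\delta_\Gamma}\tilde\Omega^\delta_H(\bx^\delta_\Gamma))^3}^2\leqslant\delta^2\xnorm{\Curl\bv^\delta}{\Ltwo(\Omega^\delta)^3}^2$, but \eqref{eq:universal_estimate} controls only the \emph{unsquared}, differently weighted quantity $\delta^2\xnorm{\Curl\bv^\delta}{\Ltwo(\Omega^\delta)^3}$, and inserting it directly would lose a factor $\delta^{-2}$. The point is therefore to show that on the extended resonators the vorticity is a full power of $\delta$ smaller than the global worst case permits, i.e.\ $\xnorm{\Curl\bv^\delta}{\Ltwo(\bigcup\tilde\Omega^\delta_H)^3}=\mO(\delta^{-1})$ rather than $\mO(\delta^{-2})$. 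To harvest this I would use the vorticity equation: taking $\Curl$ of \eqref{eq:Navier_Stokes:M} annihilates both $\nabla p^\delta$ and $\nabla\Div\bv^\delta$, leaving $-\imath\omega\Curl\bv^\delta-\nu_0\delta^4\Delta\Curl\bv^\delta=\Curl\bff$, and since $\bff$ is supported away from $\Gamma$ the right-hand side vanishes on every $\tilde\Omega^\delta_H(\bx^\delta_\Gamma)$. The vorticity there thus solves a homogeneous screened equation with viscous length $\delta^2$, and a Caccioppoli (local energy) estimate, fed on the interface $\{x_3=2\sqrt\delta\}$ by \eqref{eq:universal_estimate}, should supply the missing power of $\delta$, so that the weight $\delta^2$ is exactly what forces the local sum to inherit a uniform bound. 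Two bookkeeping items then remain: the $\mO(\delta^{-1})$ edge cells touching $\partial\Gamma$, whose caps may protrude from $\Omega$ and which I would either exclude or absorb into a boundary strip controlled directly by \eqref{eq:universal_estimate}; and the final constant, which combines as $C_H=\mO(C_\Omega^2)$.
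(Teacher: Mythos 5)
Your geometric localisation---pairwise disjointness of the extended resonators, containment of their union in $\Omega^\delta$, additivity of the squared norms---together with your treatment of the $\Hs(\Div)$ and $\Hone$ terms is exactly the paper's proof, which consists of a single sentence: square \eqref{eq:universal_estimate} and use that $\bigcup_{\bx^\delta_\Gamma\in\Gamma^\delta}\tilde{\Omega}^\delta_H(\bx^\delta_\Gamma)\subset\Omega^\delta$, giving $C_H=3C_\Omega^2$. (You merely make explicit the disjointness that the paper leaves implicit.) The paper treats the curl term in precisely the same one-line way; it does \emph{not} contain anything like the vorticity-equation/Caccioppoli argument you propose.

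You are right that, read literally, squaring the assumption yields $\delta^4\xnorm{\Curl\bv^\delta}{\Ltwo(\Omega^\delta)^3}^2\leqslant C_\Omega^2$, whereas \eqref{eq:lema_better_estimate} asserts a bound on $\delta^2\xnorm{\Curl\bv^\delta}{\Ltwo(\cdot)^3}^2$, a discrepancy of $\delta^{-2}$ that the paper's proof silently identifies away; the weights in \eqref{eq:universal_estimate} and \eqref{eq:lema_better_estimate} are evidently intended to be squares of one another, and the downstream rescaled estimates \eqref{eq:lema_better_estimate_scaled}, \eqref{eq:lemma_better_estimate_scaled_pattern-} and \eqref{eq:lemma_better_estimate_scaled_aperture} show that it is the \emph{stronger} form $\delta^2\xnorm{\Curl\bv^\delta}{\Ltwo}^2$ that is actually needed later, so the mismatch sits in the stated weight of the assumption rather than in the lemma. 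Your attempted repair, however, does not close this gap. A Caccioppoli-type estimate for $-\imath\omega\,\Curl\bv^\delta-\nu_0\delta^4\Delta\,\Curl\bv^\delta=0$ gains powers of $\delta$ only in the interior, away from $\partial\Omega^\delta$; but the no-slip walls of the chambers and necks lie \emph{inside} the sets $\tilde{\Omega}^\delta_H(\bx^\delta_\Gamma)$, and it is precisely in the viscous boundary layers of thickness $O(\delta^2)$ along these walls that $\Curl\bv^\delta$ concentrates. There is therefore no reason for $\xnorm{\Curl\bv^\delta}{\Ltwo(\bigcup\tilde{\Omega}^\delta_H)^3}$ to be an order of $\delta$ smaller than $\xnorm{\Curl\bv^\delta}{\Ltwo(\Omega^\delta)^3}$---most of the vorticity lives exactly in the region you are summing over. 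The consistent resolution is to take the stability assumption in the form $\delta\xnorm{\Curl\bv^\delta}{\Ltwo(\Omega^\delta)^3}\leqslant C_\Omega$ (after which your squaring argument covers all three terms identically, as in the paper), not to try to upgrade the curl bound locally; the extra analytic machinery you sketch is neither carried out nor, as it stands, workable.
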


\begin{proof}
    Let $\omega \not\in \Lambda$, and let the assumption estimate~(\ref{eq:universal_estimate}) holds. 
    Taking the square of this estimate, and using that the union of all $\tilde{\Omega}^\delta_H(\bx^\delta_\Gamma)$ over $\bx^\delta_\Gamma$
    is a subset of $\Omega^\delta$ leads to estimate~(\ref{eq:lema_better_estimate}) with $C_H = 3 C_\Omega^2$. 
\end{proof}

In the following, we use the estimate of this Lemma in each subsection: the array of Helmholtz
resonators, the array of patterns below apertures, the array of apertures and
the array of patterns above apertures.

\subsection{Weak convergence in the resonator array}
\label{sec:weak-conv-reson}

In this section, we consider for each $\delta > 0$,
$\bx = (x_1,x_2,x_3) \in \Omega^\delta$ with $x_3 < - \sqrt{\delta}$. At the
first glance, we have to study positions $\bx$ depending on $\delta$, since the
location of each Helmholtz resonator depends on $\delta$. However, later we
will see how to separate these dependencies.

Due to the geometrical assumption on the array of Helmholtz resonators, for
each $\bx$ in the array of Helmholtz resonators, there exists a center of
aperture of one resonator, which we call resonator position
$\bx^\delta_\Gamma \in \Gamma^\delta$ such that
$\bx \in \Omega^\delta_C(\bx^\delta_\Gamma)$. We introduce the two-dimensional
point $\by \in \mA_C$, that we identify with abuse of notation to the
three-dimensional point $(\by,0)$, such that
\begin{equation*}
\bx \mapsto (\by,x_3) := \big(\tfrac{1}{\delta} \big( \bx - \bx^\delta_\Gamma
\big) , x_3 \big) \in \mA_C \times (-L,-\sqrt{\delta}),
\end{equation*}
\ie $\bx = \bx^\delta_\Gamma + (0,0,x_3) + \delta \by$. Using the coordinate
$(\by,x_3)$ means to stretch the resonator
$\Omega^\delta_H(\bx^\delta_\Gamma)$in the transverse plane $(\be_1,\be_2)$,
and in this plane only. The stretched resonator chamber is denoted by
$\widehat{\Omega}^\delta_C := \mA_C \times (-L, -\sqrt{\delta})$.

In the following, we introduce the five-dimensional functions
$\bV^\delta$ and $P^\delta$, depending on the resonator position
$\bx^\delta_\Gamma$, the slow longitudinal variable $x_3$ and the fast
transverse variable~$\by$, by
\begin{equation}
\label{eq:change_function_resonator}
\begin{aligned}
\bV^\delta(\bx^\delta_\Gamma,x_3,\by) & = \bv^\delta(\bx^\delta_\Gamma
+ (0,0,x_3) + \delta \by), \\
P^\delta(\bx^\delta_\Gamma,x_3,\by) & = p^\delta(\bx^\delta_\Gamma
+ (0,0,x_3) + \delta \by).
\end{aligned}
\end{equation}

Considering the linearized Navier-Stokes problem~\eqref{eq:Navier_Stokes} with
the Laplace operator written as $\Delta = \nabla \Div - \Curl \Curl$ and
applying the anisotropic coordinate change, we obtain the system
\begin{subequations}
    \label{eq:Navier_Stokes:resonator}
    \begin{align}
    \label{eq:Navier_Stokes:resonator:M}
    - \imath \omega \bV^\delta + \tfrac{1}{\delta \rho_0} \nabla_{\by}
    P^\delta + \tfrac{1}{\rho_0}
    \partial_{x_3} P^\delta \be_3
    & \\ \nonumber - (\nu_0 + \nu_0') \delta^2 \nabla_{\by}
    \Div_\by \bV^\delta  & \\ \nonumber - (\nu_0 + \nu_0') \delta^2
    \partial_{\be_3} \Div_\by \bV^\delta \be_3 - & \\ \nonumber
    (\nu_0 + \nu_0') \delta^3 \nabla_{\by} \big( \partial_{x_3} \bV^\delta
    \cdot \be_3 \big) & \\ \nonumber - (\nu_0 + \nu_0') \delta^4 \partial_{x_3} \big(
    \partial_{x_3} \bV^\delta \cdot \be_3
    \big) & \\ \nonumber + \nu_0 \delta^4 \big( \tfrac{1}{\delta} \partial_{y_1},
    \tfrac{1}{\delta} \partial_{y_2}, \partial_{x_3} \big) \wedge
    \Big( \big( \tfrac{1}{\delta} \partial_{y_1}, 
    \tfrac{1}{\delta} \partial_{y_2}, \partial_{x_3} \big) \wedge 
    \bV^\delta \Big)
    &= \zerobf, \quad \text{in }\Gamma^\delta \times \widehat{\Omega}^\delta_C, 
    \\ 
    - \imath \omega P^\delta + \tfrac{\rho_0
        c^2}{\delta} \Div_{\by} \bV^\delta + 
    \rho_0 c^2 \big( \partial_{x_3}
    \bV^\delta \cdot \be_3 \big) 
    &= 0, \quad \text{in
    }\Gamma^\delta \times
    \widehat{\Omega}^\delta_C, \label{eq:Navier_Stokes:resonator:C}
    \\ 
    \bV^\delta
    &= \zerobf , \quad \text{on } \Gamma^\delta \times
    (-L,-\sqrt{\delta}) \times \partial \mA_C
    , \label{eq:Navier_Stokes:resonator:B1} \\  
    \bV^\delta
    &= \zerobf , \quad \text{on }
    \Gamma^\delta \times \lbrace -L
    \rbrace \times \mA_C . \label{eq:Navier_Stokes:resonator:B2}
    \end{align}
\end{subequations}

The estimate~(\ref{eq:lema_better_estimate}) of the
Lemma~\ref{lema:better_estimate} is equivalent to state in rescaled coordinates
that
\begin{multline}
\label{eq:lema_better_estimate_scaled}
\sum_{\bx^\delta_\Gamma \in \Gamma^\delta} \xnorm{\bV^\delta(\bx_\Gamma^\delta,\cdot)}{\Ltwo(\widehat{\Omega}^\delta_C)^3}^2 
+ \tfrac{1}{\delta^2}  \sum_{\bx^\delta_\Gamma \in \Gamma^\delta}
\xnorm{\Div_\by 
    \bV^\delta}{\Ltwo(\widehat{\Omega}^\delta_C)}^2  
\\ + \sum_{\bx^\delta_\Gamma \in \Gamma^\delta} \xnorm{
    \partial_{x_3} \bV^\delta(\bx_\Gamma^\delta,\cdot) \cdot
    \be_3}{\Ltwo(\widehat{\Omega}^\delta_C)}^2 
+ \delta^4 \sum_{\bx^\delta_\Gamma \in \Gamma^\delta} \xnorm{\big( \tfrac{1}{\delta} \partial_{y_1}, 
    \tfrac{1}{\delta} \partial_{y_2}, \partial_{x_3} \big) \wedge 
    \bV^\delta(\bx^\delta_\Gamma,\cdot)}{\Ltwo(\widehat{\Omega}^\delta_C)^3}^2
\\ + \sum_{\bx^\delta_\Gamma \in \Gamma^\delta} \xnorm{
    P^\delta(\bx^\delta_\Gamma,\cdot)}{\Ltwo(\widehat{\Omega}^\delta_C)}^2 +
\tfrac{1}{\delta^2} \sum_{\bx^\delta_\Gamma \in \Gamma^\delta} \xnorm{
    \nabla_\by p^\delta(\bx^\delta_\Gamma,\cdot)}{\Ltwo(\tilde{\Omega}^\delta_H(\bx^\delta_\Gamma))^2}^2 
\\ + \sum_{\bx^\delta_\Gamma \in \Gamma^\delta} \xnorm{
    \partial_{x_3} P^\delta(\bx^\delta_\Gamma,\cdot)}{\Ltwo(\widehat{\Omega}^\delta_C)}^2 \leqslant C_H.
\end{multline}

The main idea is to extend the functions $(\bV^\delta,P^\delta)$
\emph{discrete} with respect to $\bx^\delta_\Gamma \in \Gamma^\delta$ to
functions still denoted by $(\bV^\delta,P^\delta)$ and \emph{continuous} with
respect to $\bx \in \Gamma$ by the following
\begin{equation*}
(\bV^\delta,P^\delta)(\bx_\Gamma,\cdot) =
(\bV^\delta,P^\delta)(\bx^\delta_\Gamma, \cdot), \quad \bx_\Gamma \in
\bx^\delta_\Gamma + \delta \mA.
\end{equation*}
The equation~(\ref{eq:Navier_Stokes:resonator}) is then extended naturally on
$\bx_\Gamma \in \Gamma$ (the parameter $\bx^\delta_\Gamma$ is only playing a
parameter), and the \emph{discrete} error
estimate~(\ref{eq:lema_better_estimate_scaled}) is extended to a
\emph{continuous} error estimate
\begin{multline}
\label{eq:lemma_better_estimate_scaled_continuous}
\xnorm{\bV^\delta}{\Ltwo(\Gamma;
    \Ltwo(\widehat{\Omega}^\delta_C)^3)}^2 + \tfrac{1}{\delta^2}
\xnorm{\Div_{\by} \bV^\delta}{\Ltwo(\Gamma;
    \Ltwo(\widehat{\Omega}^\delta_C))}^2 + \xnorm{\partial_{x_3}
    \bV^\delta \cdot \be_3}{\Ltwo(\Gamma;
    \Ltwo(\widehat{\Omega}^\delta_C))}^2 \\ + \delta^4
\xnorm{\big( \tfrac{1}{\delta} \partial_{y_1}, \tfrac{1}{\delta}
    \partial_{y_2}, \partial_{x_3} \big) \wedge \bV^\delta}{\Ltwo(\Gamma;
    \Ltwo(\widehat{\Omega}^\delta_C)^3)}^2 +
\xnorm{P^\delta}{\Ltwo(\Gamma; \Ltwo(\widehat{\Omega}^\delta_C))}^2 \\
+ \frac{1}{\delta^2} \xnorm{\nabla_\by P^\delta}{\Ltwo(\Gamma;
    \Ltwo(\widehat{\Omega}^\delta_C)^2)}^2 + \xnorm{\partial_{x_3}
    P^\delta}{\Ltwo(\Gamma; \Ltwo(\widehat{\Omega}^\delta_C))}^2 \leqslant
C_H |\mA|.
\end{multline}

This means that the sequence $(\bV^\delta,P^\delta)$ is bounded in
$\Ltwo(\Gamma; \Hs(\Div, \widehat{\Omega}^\delta_C)) \times \Ltwo(\Gamma;
\Hone(\widehat{\Omega}^\delta_C))$,
independent of $\delta \to 0$ even the domain $\widehat{\Omega}^\delta_C$
enlarges for decreasing $\delta$.  Then, for any fixed $\epsi > 0$ and for any
$\delta < \epsi$, the sequence $(\bV^\delta,P^\delta)$ is bounded in
$\Ltwo(\Gamma; \Hs(\Div, \widehat{\Omega}^\epsi_C)) \times \Ltwo(\Gamma;
\Hone(\widehat{\Omega}^\epsi_C))$
therefore we can extract a subsequence that we still denote by
$(\bV^\delta, P^\delta)$ that converges to a limit $(\bV_0^\epsi, P_0^\epsi)$
weakly in
$\Ltwo(\Gamma; \Hs(\Div, \widehat{\Omega}^\epsi_C)) \times \Ltwo(\Gamma;
\Hone(\widehat{\Omega}^\epsi_C))$.
Combining the lower semi-continuity of the weak
limit stated by the Theorem 2.2.1 of the book of Evans\cite{evans1990weak} with the
estimate~\eqref{eq:lemma_better_estimate_scaled_continuous}, we find
\begin{equation*}
\xnorm{\nabla_\by P_0^\epsi}{\Ltwo(\Gamma;
    \Ltwo(\widehat{\Omega}^\epsi_C)^2)} \leqslant 
\liminf_{\delta \to 0} \xnorm{\nabla_\by
    P^\delta}{\Ltwo(\Gamma;
    \Ltwo(\widehat{\Omega}^\delta_C)^2)} \leqslant
\liminf_{\delta \to 0} \delta \sqrt{C_H |\mA|} = 0.
\end{equation*}
This motivates us to take the scalar product
of~\eqref{eq:Navier_Stokes:resonator} with test functions $(\bW,Q)$
with $Q$ independent of the fast transverse variable $\by$. Then,
integrating by parts equation~\eqref{eq:Navier_Stokes:resonator} leads
to
\begin{equation}
\label{eq:Navier_Stokes_resonator:bv}
\begin{aligned}
- \imath \omega
\xscal{\bV^\delta(\bx_\Gamma,\cdot)}{\bW}{(\widehat{\Omega}^\epsi_C)^3}
+ \tfrac{1}{\rho_0} \xscal{\partial_{x_3}
    P^\delta(\bx_\Gamma,\cdot)
    \be_3}{\bW}{(\widehat{\Omega}^\epsi_C)^3} & \\ - \nu_0 \delta^4
\xscal{\big( \tfrac{1}{\delta} \partial_{y_1}, \tfrac{1}{\delta}
    \partial_{y_2}, \partial_{x_3} \big) \wedge
    \bV^\delta(\bx_\Gamma,\cdot)}{( 0, 0, \partial_{x_3} ) \wedge
    \bW}{(\widehat{\Omega}^\epsi_C)^3} & \\ + (\nu_0 + \nu'_0)
\delta^2 \xscal{\Div_\by(\bx_\Gamma,\cdot)
    \bV^\delta}{\Div_\by \bW \cdot
    \be_3}{\widehat{\Omega}^\epsi_C} \\ + (\nu_0 + \nu'_0)
\delta^3 \xscal{\partial_{x_3}(\bx_\Gamma,\cdot)
    \bV^\delta}{\Div_\by \bW \cdot
    \be_3}{\widehat{\Omega}^\epsi_C} \\ + (\nu_0 + \nu'_0)
\delta^3 \xscal{\Div_\by(\bx_\Gamma,\cdot)
    \bV^\delta}{\partial_{x_3} \bW \cdot
    \be_3}{\widehat{\Omega}^\epsi_C} & \\+ (\nu_0 + \nu'_0) \delta^4
\xscal{\partial_{x_3} \bV^\delta(\bx_\Gamma,\cdot) \cdot
    \be_3}{\partial_{x_3} \bW \cdot \be_3}{\widehat{\Omega}^\epsi_C}
& = 0, \\
- \imath \omega
\xscal{P^\delta(\bx_\Gamma,\cdot)}{Q}{\widehat{\Omega}^\epsi_C} +
\rho_0 c^2 \xscal{\nabla_\by
    \bV^\delta(\bx_\Gamma,\cdot)}{Q}{\widehat{\Omega}^\epsi_C} & \\ + \rho_0
c^2 \xscal{\partial_{x_3} \bV^\delta(\bx_\Gamma,\cdot) \cdot
    \be_3}{Q}{\widehat{\Omega}^\epsi_C} & = 0.
\end{aligned}
\end{equation}
Then, using the weak convergence of $(\bV^\delta,P^\delta)$ to
$(\bV_0^\epsi, P_0^\epsi)$ in
$\Ltwo(\Gamma; \Hs(\Div, \widehat{\Omega}^\epsi_C)) \times \Ltwo(\Gamma;
\Hone(\widehat{\Omega}^\epsi_C))$
using estimate~(\ref{eq:lemma_better_estimate_scaled_continuous}), we find the
limit $\delta \to 0$ for almost all $\bx_\Gamma \in \Gamma$
\begin{subequations}
    \label{eq:Navier_Stokes_resonator:bv_lim}
    \begin{align}
    \label{eq:Navier_Stokes_resonator:bv_lim:M}
    - \imath \omega
    \xscal{\bV_0^\epsi(\bx_\Gamma,\cdot)}{\bW}{(\widehat{\Omega}^\epsi_C)^3}
    + \tfrac{1}{\rho_0} \xscal{\partial_{x_3}
        P_0^\epsi(\bx_\Gamma,\cdot)
        \be_3}{\bW}{(\widehat{\Omega}^\epsi_C)^3} & =
    0, \\
    \label{eq:Navier_Stokes_resonator:bv_lim:C}
    - \imath \omega
    \xscal{P_0^\epsi(\bx_\Gamma,\cdot)}{Q}{\widehat{\Omega}^\epsi_C} +
    \rho_0 c^2 \xscal{\partial_{x_3} \bV_0^\epsi(\bx_\Gamma,\cdot)
        \cdot \be_3}{Q}{\widehat{\Omega}^\epsi_C} & = 0.
    \end{align}
\end{subequations}
The arbitrary choice of $\bW$ leads to
\begin{equation*}
- \imath \omega \bV_0^\epsi (\bx_\Gamma,\cdot) +
\tfrac{1}{\rho_0} \partial_{x_3} P_0^\epsi (\bx_\Gamma,\cdot) \be_3 = 0,
\end{equation*}
and gives \aposteriori that $\bV_0^\epsi(\bx_\Gamma,\cdot)$ is also independent
of $\by$ and is directed among the $\be_3$ direction, \ie there exists a scalar
function $V_{0,3}^\epsi$ independent of $\by$ such that
\begin{equation*}
\bV_0^\epsi(\bx_\Gamma,x_3,\by) = V_{0,3}^\epsi(\bx_\Gamma,x_3) \be_3.
\end{equation*}
Taking then the equation~(\ref{eq:Navier_Stokes_resonator:bv_lim:C}), we deduce
that
\begin{equation*}
- \imath \omega P_0^\epsi (\bx_\Gamma,\cdot) + \rho_0 c^2 \partial_{x_3}
\bV_0^\epsi(\bx_\Gamma,\cdot) \cdot \be_3 = 0,
\end{equation*}
\ie that $(\bV_0^\epsi(\bx_\Gamma,\cdot),P_0^\epsi(\bx_\Gamma,\cdot))$ is
solution of an homogeneous one-dimensional Helmholtz equation. Derivation of
the boundary condition at $x_3 = -L$ is done as follow: we take a particular
test function $W$ depending on $(x_3,\by) \in (-L,0) \times \mA_C$ such that
$W(x_3,\by) = 1$ for $x_3 < -3L/4$ and $W(x_3,\by) = 0$ for $x_3 > -L/2$, and
using a one-dimensional Stokes formula coupled to the weak convergence of
$\bV^\delta$ to $V^\epsi_{0,3} \be_3$ in
$(\Ltwo(\Gamma; \Ltwo(\widehat{\Omega}^\delta_C))$ gives
\begin{multline*}
0 = \lim_{\delta \to 0} \int_{\mA_C} \Big\lbrace \partial_{x_3}
(\bV^\delta(\bx_\Gamma,x_3,\by) \cdot \be_3 -
V^\epsi_{0,3}(\bx_\Gamma,x_3)) W(x_3) \\ +
(\bV^\delta(\bx_\Gamma,x_3,\by) \cdot \be_3 -
V^\epsi_{0,3}(\bx_\Gamma,x_3)) \partial_{x_3} W(x_3) \Big\rbrace
\,\mathrm{d}\by \,\mathrm{d}x_3 = - a_C V^\epsi_{0,3}(\bx_\Gamma,x_3),
\end{multline*}
so there exists a scalar function $V^\epsi_0$ depending only on the resonator
position such that
\begin{equation}
\label{eq:expression_resonator_epsi}
\begin{aligned}
\bV^\epsi_0(\bx_\Gamma,x_3) & = V^\epsi_0(\bx_\Gamma) \sin \big(
\tfrac{\omega}{c} (L+x_3) \big) \\
P^\epsi_0(\bx_\Gamma,x_3)&  = - \imath \rho_0 c V^\epsi_0(\bx_\Gamma) \cos \big(
\tfrac{\omega}{c} (L+x_3) \big).
\end{aligned}
\end{equation}
The next point is to derive a limit problem on the domain
$\widehat{\Omega}_C := \lim_{\epsi \to 0} \widehat{\Omega}_C^\epsi =
\mA_C \times (-L,0)$. To do so, using the lower semi-continuity of the
weak limit $(\bV_0^\epsi,P^\epsi_0)$ with the \apriori
estimate~(\ref{eq:lemma_better_estimate_scaled_continuous}), it holds 
\begin{multline*}
\xnorm{\bV_0^\epsi}{\Ltwo(\Gamma,\Ltwo(\widehat{\Omega}^\epsi_C))}^2
+ \xnorm{\partial_{x_3}
    \bV_0^\epsi}{\Ltwo(\Gamma,\Ltwo(\widehat{\Omega}^\epsi_C))}^2 
\\ + \xnorm{P_0^\epsi}{\Ltwo(\Gamma,\Ltwo(\widehat{\Omega}^\epsi_C))}^2
+ \xnorm{\partial_{x_3}
    P_0^\epsi}{\Ltwo(\Gamma,\Ltwo(\widehat{\Omega}^\epsi_C))}^2 
\leqslant C_H |\mA|.
\end{multline*}
This error estimate combined to~(\ref{eq:expression_resonator_epsi}) leads to
\begin{equation*}
\big( 1 + \tfrac{\omega^2}{c^2} + \rho_0^2 c^2 + \rho_0^2 \omega^2 \big)
(L-\sqrt{\epsi}) \xnorm{V^\epsi_0}{\Ltwo(\Gamma)}^2 \leqslant 2 C_H |\mA|.
\end{equation*}
Extending $(\bV^\epsi_0,P^\epsi_0)$ on $\mA \times (-L,0)$ using
formula~(\ref{eq:expression_resonator_epsi}) and using this last error
estimate leads to, for $2\sqrt{\epsi} < L$,
\begin{multline*}
\xnorm{\bV_0^\epsi}{\Ltwo(\Gamma,\Ltwo(\widehat{\Omega}^0_C))}^2
+ \xnorm{\partial_{x_3}
    \bV_0^\epsi}{\Ltwo(\Gamma,\Ltwo(\widehat{\Omega}^0_C))}^2 
\\ + \xnorm{P_0^\epsi}{\Ltwo(\Gamma,\Ltwo(\widehat{\Omega}^0_C))}^2
+ \xnorm{\partial_{x_3}
    P_0^\epsi}{\Ltwo(\Gamma,\Ltwo(\widehat{\Omega}^0_C))}^2 
\leqslant \tfrac{2L}{(L-\sqrt{\epsi})} C_H |\mA| \leqslant 4 C_H |\mA|. 
\end{multline*}
so that $(\bV^\epsi_0,P^\epsi_0)$ is uniformly bounded in $\Ltwo(\Gamma;\Hs(\Div;
\widehat{\Omega}_C) \times \Hone(\widehat{\Omega}_C))$. There exists then a
subsequence that we still denote by $(\bV^\epsi_0,P^\epsi_0)$ that weakly
converges to a limit $(\bV_0,P_0) \in \Ltwo(\Gamma;\Hs(\Div;
\widehat{\Omega}_C) \times \Hone(\widehat{\Omega}_C))$.

We have two small quantities, $\delta$ and $\epsi$, and we want to make these
two quantities tending to $0$. To do so, we will make a diagonal construction
of $(\bV^\delta, P^\delta)$ to $(V_{0,3} \be_3, P_0)$ by the following
\begin{enumerate}
    \item $(\bV^\delta, P^\delta)$ is bounded in
    $\Ltwo\big( \Gamma; (\Hs(\Div;\widehat{\Omega}^{\epsi_1}_C))^3 \times
    \Hone(\widehat{\Omega}^{\epsi_1}_C) \big)$
    using the energy estimate~\eqref{eq:lemma_better_estimate_scaled_continuous}
    for any $\delta \leqslant \epsi_1$, so there exists a sequence
    $(\bV^{\delta_n^{(1)}}, P^{\delta_n^{(1)}})_{n \in \IN}$ that weakly converges to
    $(\bV^{\epsi_1}, P^{\epsi_1})$
    $\Ltwo\big( \Gamma; (\Hs(\Div;\widehat{\Omega}^{\epsi_1}_C)) \times
    \Hone(\widehat{\Omega}^{\epsi_1}_C) \big)$. We consider the sequence of
    decreasing indices $(\delta_n^{(1)})_{n \in \IN}$ such that $\delta_0^{(1)}
    \leqslant \epsi_2$,
    \item $(\bV^{\delta_n^{(1)}}, P^{\delta_n^{(1)}})_{n \in \IN}$ is bounded in
    $\Ltwo\big( \Gamma; (\Hs(\Div;\widehat{\Omega}^{\epsi_2}_C)) \times
    \Hone(\widehat{\Omega}^{\epsi_2}_C) \big)$
    using the energy estimate~\eqref{eq:lemma_better_estimate_scaled_continuous}
    since $\delta_n^{(1)} \leqslant \delta_0^{(1)} \leqslant \epsi_2$ for any
    $n \in \IN$, so there exists a subsequence
    $(\bV^{\delta_n^{(2)}}, P^{\delta_n^{(2)}})$ that weakly converges to
    $(\bV^{\epsi_2}, P^{\epsi_2})$
    $\Ltwo\big( \Gamma; (\Hs(\Div;\widehat{\Omega}^{\epsi_2}_C)) \times
    \Hone(\widehat{\Omega}^{\epsi_2}_C) \big)$.
    We consider the sequence of decreasing indices $(\delta_n^{(2)})_{n \in \IN}$
    such that $\delta_0^{(2)} \leqslant \epsi_3$. Finally due to the extraction
    process, it holds that
    $(\bV^{\epsi_2},P^{\epsi_2})=(\bV^{\epsi_1},P^{\epsi_1})$ on
    $\widehat{\Omega}^{\epsi_1}_C$,
    \item iteratively for any $k \geqslant 1$,
    $(\bV^{\delta_n^{(k)}}, P^{\delta_n^{(k)}})_{n \in \IN}$ is bounded in
    $\Ltwo\big( \Gamma; (\Hs(\Div;\widehat{\Omega}^{\epsi_{k+1}}_C)) \times
    \Hone(\widehat{\Omega}^{\epsi_{k+1}}_C) \big)$
    using the energy estimate~\eqref{eq:lemma_better_estimate_scaled_continuous} since
    $\delta_n^{(k)} \leqslant \delta_0^{(k)} \leqslant \epsi_{k+1}$ for any
    $n \in \IN$, so there exists a subsequence
    $(\bV^{\delta_n^{(k+1)}}, P^{\delta_n^{(k+1)}})$ that weakly converges to
    $(\bV^{\epsi_{k+1}}, P^{\epsi_{k+1}})$
    $\Ltwo\big( \Gamma; (\Hs(\Div;\widehat{\Omega}^{\epsi_{k+1}}_C))^3 \times
    \Hone(\widehat{\Omega}^{\epsi_{k+1}}_C) \big)$.
    We consider the sequence of decreasing indices
    $(\delta_n^{(k+1)})_{n \in \IN}$ such that
    $\delta_0^{(k+1)} \leqslant \epsi_{k+2}$. Finally due to the extraction
    process, it holds that
    $(\bV^{\epsi_{k+1}},P^{\epsi_{k+1}})=(\bV^{\epsi_k},P^{\epsi_k})$ on
    $\widehat{\Omega}^{\epsi_k}_C$.
\end{enumerate}
We finally take the sequence $(\bV^{\delta_n^{(n)}},P^{\delta_n^{(n)}})$. 
By property of the intermediate sequence $(\bV^{\epsi_n}_0,P^{\epsi_n}_0)$,
$(\bV^{\delta_n^{(n)}},P^{\delta_n^{(n)}})$ weakly converges to $(V_0,P_0)$ in
$\Ltwo(\Gamma;\Hs(\Div,K)) \times \Ltwo(\Gamma;\Hone(K))$ for any $K \subset
\widehat{\Omega}_C$ whose minimal to the boundary $\lbrace x_3 = 0 \rbrace$ is
positive. Using~(\ref{eq:expression_resonator_epsi}), we obtain the following
\begin{prop}
    There exists a scalar function $V_0 \in \Ltwo(\Gamma)$ such that
    \begin{equation}
    \label{eq:one_dimensional_expression}
    \begin{aligned}
    \bV_0(\bx_\Gamma,x_3) & = V_0(\bx_\Gamma) \sin \big( \tfrac{\omega}{c}
    (x_3+L) \big) \,
    \be_3, \\ P_0(\bx_\Gamma,x_3) & = - \imath \rho_0 c
    V_0(\bx_\Gamma) \cos \big( \tfrac{\omega}{c} (x_3+L) \big).
    \end{aligned}
    \end{equation}
\end{prop}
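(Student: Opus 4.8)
The plan is to identify the diagonal limit $(\bV_0,P_0)$ with the $\epsi$-level profiles~\eqref{eq:expression_resonator_epsi} and then to exhaust $\widehat{\Omega}_C = \mA_C\times(-L,0)$ by the nested domains $\widehat{\Omega}^\epsi_C$ as $\epsi\to0$. The key observation that makes this work is that the explicit one-dimensional form survives the limit because its coefficient is $\epsi$-independent.

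First I would show that the coefficient $V^\epsi_0(\bx_\Gamma)$ appearing in~\eqref{eq:expression_resonator_epsi} does not depend on $\epsi$. This uses the compatibility relation recorded in the diagonal construction, namely that $(\bV^{\epsi_{k+1}}_0,P^{\epsi_{k+1}}_0)$ and $(\bV^{\epsi_k}_0,P^{\epsi_k}_0)$ coincide on $\widehat{\Omega}^{\epsi_k}_C$ in the $\Ltwo(\Gamma;\,\cdot\,)$ sense. Since both sides are given by the explicit profile~\eqref{eq:expression_resonator_epsi}, which is independent of the transverse variable $\by$, and since $x_3\mapsto\sin\big(\tfrac{\omega}{c}(L+x_3)\big)$ does not vanish almost everywhere on $(-L,-\sqrt{\epsi_k})$, this equality forces $V^{\epsi_{k+1}}_0(\bx_\Gamma)=V^{\epsi_k}_0(\bx_\Gamma)$ for almost every $\bx_\Gamma\in\Gamma$. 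I then denote the common value by $V_0(\bx_\Gamma)$. For membership $V_0\in\Ltwo(\Gamma)$ I would reuse the $\Ltwo(\Gamma)$ bound on the coefficient derived just above the statement: for $\epsi$ small enough that $L-\sqrt{\epsi}\geqslant\tfrac{L}{2}$ one gets $\big(1+\tfrac{\omega^2}{c^2}+\rho_0^2 c^2+\rho_0^2\omega^2\big)\tfrac{L}{2}\,\xnorm{V_0}{\Ltwo(\Gamma)}^2\leqslant 2C_H|\mA|$, which is uniform and hence gives $V_0\in\Ltwo(\Gamma)$.

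Next I would identify the diagonal limit on each fixed slab. By construction the diagonal sequence $(\bV^{\delta_n^{(n)}},P^{\delta_n^{(n)}})$ is, from its $k$-th term onwards, a subsequence of $(\bV^{\delta_n^{(k)}},P^{\delta_n^{(k)}})$, which converges weakly to $(\bV^{\epsi_k}_0,P^{\epsi_k}_0)$ in $\Ltwo(\Gamma;\Hs(\Div,\widehat{\Omega}^{\epsi_k}_C))\times\Ltwo(\Gamma;\Hone(\widehat{\Omega}^{\epsi_k}_C))$. Hence the diagonal sequence converges weakly to the same limit on $\widehat{\Omega}^{\epsi_k}_C$, and by uniqueness of weak limits the global limit $(\bV_0,P_0)$ restricted to $\widehat{\Omega}^{\epsi_k}_C$ equals $(\bV^{\epsi_k}_0,P^{\epsi_k}_0)$, \ie the explicit profile with coefficient $V_0$. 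Letting $\epsi_k\to0$ and using that the $\widehat{\Omega}^{\epsi_k}_C$ exhaust $\widehat{\Omega}_C$ yields~\eqref{eq:one_dimensional_expression} on all of $\widehat{\Omega}_C$.

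The only point requiring care is the consistency of the diagonal extraction, that is, that the diagonal sequence inherits the weak limit $(\bV^{\epsi_k}_0,P^{\epsi_k}_0)$ on each fixed $\widehat{\Omega}^{\epsi_k}_C$; this is the standard Cantor diagonal argument and is rendered routine by the nesting relations $\delta_0^{(k+1)}\leqslant\epsi_{k+2}$ already recorded in the construction. The genuinely new ingredient is the matching of the profiles across $\epsi$-levels in the first step, which is precisely what forces the coefficient $V_0$ to be $\epsi$-independent and thereby lets the explicit sine/cosine form pass to the limit unchanged.
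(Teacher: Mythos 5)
Your proposal is correct and follows essentially the same route as the paper: it relies on the explicit one-dimensional profiles~\eqref{eq:expression_resonator_epsi} at each $\epsi$-level, the compatibility of the nested weak limits recorded in the diagonal construction, and the uniform $\Ltwo(\Gamma)$ bound on the coefficient to pass to the exhaustion $\widehat{\Omega}^{\epsi}_C \nearrow \widehat{\Omega}_C$. The only difference is that you make explicit the step the paper leaves implicit, namely that the non-vanishing of $x_3 \mapsto \sin\big(\tfrac{\omega}{c}(L+x_3)\big)$ forces the coefficient $V^\epsi_0$ to be independent of $\epsi$, which is a welcome clarification rather than a deviation.
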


\textbf{Conclusion:}
Using the weak convergence of $(\bV^\delta,P^\delta)$ to $(\bV_0,P_0)$
and~(\ref{eq:one_dimensional_expression}), we get the following limit interface
conditions: 
\begin{equation}
\label{eq:one_dimensional_interface_conditions}
\begin{aligned}
\lim_{\delta \to 0} \int_{\mA_C} P^\delta\big(\bx_\Gamma,-2
\sqrt{\delta},(y_1,y_2)\big) \, \mathrm{d}y_1 \, \mathrm{d}y_2 & = -a_C
\imath \rho_0 c V_0(\bx_\Gamma) \cos \big( \tfrac{\omega L}{c} \big), \\
\lim_{\delta \to 0} \int_{\mA_C} \bV^\delta \big(\bx_\Gamma,-2
\sqrt{\delta},(y_1,y_2)\big) \cdot \be_3 \, \mathrm{d}y_1 \, \mathrm{d}y_2
& = a_C V_0(\bx_\Gamma) \sin \big( \tfrac{\omega L}{c} \big).
\end{aligned}
\end{equation}

\subsection{Weak convergence in the pattern below aperture}
\label{sec:weak-conv-patt}

Again we consider for each $\delta > 0$ and any $\bx \in \Omega^\delta
\setminus \Omega$ the unique corresponding resonator position
$\bx^\delta_\Gamma$ with $\bx \in \Omega^\delta_H(\bx^\delta_\Gamma)$. Then we
define the zone below aperture as
\begin{equation*}
\Omega^\delta_-(\bx^\delta_\Gamma) := \left\lbrace \bx \in \bx^\delta_\Gamma
+ \delta \mA_C \times (-2\sqrt{\delta},-h_0
\delta^2) \text{ such that } \bx \not\in B_3(\bx^\delta_\Gamma-\delta h_0
\be_3, \delta^{\tfrac{3}{2}}) \right\rbrace,
\end{equation*}
and the rescaled zone below aperture (see Fig.~\ref{fig:domain_aperture_minus})
for the variable change $\by = \delta^{-1} (\bx-\bx^\delta_\Gamma)$
\begin{equation*}
\mB^\delta_- := \left\lbrace \by \in \mA_C \times (-2/\sqrt{\delta},-h_0
\delta) \text{ such that } \by \not\in B_3(-\delta h_0 \be_3, \sqrt{\delta}) \right\rbrace,
\end{equation*}
where $B_d(\bx_0,r)$ denotes the $d$-dimensional ball centered at $\bx_0$ and of
radius $r$.

\begin{figure}[!bt]
    \centering
    \begin{tikzpicture}[scale=0.6]
    \draw[white] (-2.5,-3) rectangle (2.5,0.5);
    \fill[black!20!white] (-3,-7) rectangle (3,-0.345);
    \filldraw[draw=red, fill=white] (-1,-0.345) arc(180:360:1) node
    [pos=0.5,below] {$\color{red} \Gamma^\delta_-(\sqrt{\delta})$};
    \draw[thick] (-3.5,0.345) -- (-0.346,0.345) -- (-0.346,-0.345) --
    (-1,-0.345);
    \draw[thick] (-3,-0.345) -- (-3,-8);
    \draw[thick] (3.5,0.345) -- (0.346,0.345) -- (0.346,-0.345) --
    (1,-0.345);
    \draw[thick] (3,-0.345) -- (3,-8);
    \draw[blue,thick] (-3,-0.345) -- (-1,-0.345);
    \draw[blue,thick] (1,-0.345) -- (3,-0.345) node [pos=1.0,right] {$\mG^\delta_-$};
    \draw[->] (0,-0.345) ++ (0,0) -- ++ (300:1) node [pos=1.1,right]
    {$\sqrt{\delta}$}; 
    \draw[<->] (1.8,-0.345) -- (1.8,0.345) node[pos=0.5, right]
    {$h_0 \delta$};
    \draw[<->] (-0.346,-0.28) -- (0.346,-0.28) node[pos=0.5, below]
    {$d_0 \delta$};
    \draw (-3,-7) -- (3,-7) node[pos=0.5,below] {$y_3 = - 2 / \sqrt{\delta}$};
    \filldraw (0,0.345) circle (0.1) node [above] {$\zerobf$};
    \end{tikzpicture}
    \caption{Illustration of the canonical domain $\mB^\delta_-$ for the pattern
        below aperture (gray).}
    \label{fig:domain_aperture_minus}
\end{figure}
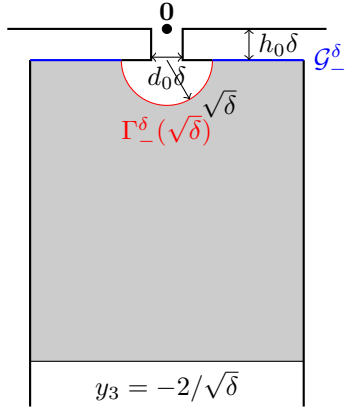

Here, we are interested in the behaviour of the solution
$(\bv^\delta,p^\delta)$ solution of~(\ref{eq:Navier_Stokes}) in the resonator array, close to the
apertures. To do so, we introduce the
five-dimensional functions $\bPsi^\delta_-$ and $\Phi^\delta_-$ by
\begin{equation}
\label{eq:change_function_aperture_below}
\bPsi^\delta_-(\bx^\delta_\Gamma, \by) = \bv^\delta(\bx^\delta_\Gamma
+ \delta \by) \quad \text{and} \quad 
\Phi^\delta_-(\bx^\delta_\Gamma, 
\by) = p^\delta(\bx^\delta_\Gamma + \delta \by).
\end{equation}
Then, in view of estimate~\eqref{eq:lema_better_estimate} of
Lemma~\ref{lema:better_estimate}, the rescaled functions
$(\bPsi^\delta_-,\Phi^\delta_-)$ satisfy the following estimate for
almost all resonators
\begin{multline}
\label{eq:lemma_better_estimate_scaled_pattern-}
\delta
\xnorm{\bPsi^\delta_-(\bx^\delta_\Gamma,\cdot)}{\Ltwo(\mB^\delta_-)}^2
+ \tfrac{1}{\delta} \xnorm{\Div_{\by}
    \bPsi^\delta_-(\bx^\delta_\Gamma,\cdot)}{\Ltwo(\mB^\delta_-)}^2
+ \delta^3 \xnorm{\Curl_{\by}
    \bPsi^\delta_-(\bx^\delta_\Gamma,\cdot)}{\Ltwo(\mB^\delta_-)}^2
\\ + \delta
\xnorm{\Phi^\delta_-(\bx^\delta_\Gamma,\cdot)}{\Ltwo(\mB^\delta_-)}^2
+ \tfrac{1}{\delta}
\xnorm{\nabla_\by \Phi^\delta_-(\bx^\delta_\Gamma,\cdot)}{\Ltwo(\mB^\delta_-)}^2
\leqslant C_H
\end{multline}
We extend again this \emph{discrete} error estimate into a \emph{continuous}
error estimate on $\Ltwo(\Gamma; \Ltwo(\mB^\delta_-))$. This error estimate
allows us to give some results about the rescaled functions.

\textbf{For the rescaled pressure $\Phi^\delta_-$}, using the
Cauchy-Schwartz inequality and the error
estimate~(\ref{eq:lemma_better_estimate_scaled_pattern-}) leads to
\begin{equation*}
\Big( \int_{\mB^\delta_-} |\nabla_\by \Phi^\delta_-(\bx_\Gamma,\by)| \,
\mathrm{d}\by \Big)^2 \leqslant \frac{2}{\sqrt{\delta}} \int_{\mB^\delta_-}
|\nabla_\by \Phi^\delta_-(\bx_\Gamma,\by)|^2 \,
\mathrm{d}\by \leqslant 2 C_H \sqrt{\delta}
\end{equation*}
and this quantity tends to $0$ as $\delta \to 0$, so that the gradient of
$\Phi^\delta_-(\bx_\Gamma,\cdot)$ tends to $0$ almost everywhere in
$(-\infty,0) \times \mA_C$. It remains to check that the average of
$\Phi^\delta_-$ remains uniformly bounded as well. This can be checked using
again the energy estimate~(\ref{eq:lemma_better_estimate_scaled_pattern-}) and
using that
\begin{equation*}
\delta \xnorm{1}{\Ltwo(\mB^\delta_-)}^2 = \delta \Big( a_C \big(
\tfrac{2}{\sqrt{\delta}} - h_0 \delta \big) -
\tfrac{2\pi}{3}\delta^{\tfrac{3}{2}} \Big)^2.
\end{equation*}
We can then extract a subsequence (that we still denote by $\Phi^\delta_-$)
that converges to a constant. Using
\begin{equation*}
P^\delta (\bx_\Gamma,-\tfrac{2}{\sqrt{\delta}},(y_1,y_2)) =
p^\delta\big(\bx_\Gamma + \delta
\big(y_1,y_2,-\tfrac{2}{\sqrt{\delta}}\big)\big) =
\Phi^\delta_-\big(\bx_\Gamma, \big(y_1,y_2,-\tfrac{2}{\sqrt{\delta}}\big)\big)
\end{equation*}
for almost every $(y_1,y_2) \in \mA_C$ and the first line
of~(\ref{eq:one_dimensional_interface_conditions}), we obtain that
\begin{equation}
\label{eq:limit_Phi_-}
\lim_{\delta \to 0} \Phi^\delta_- = - \imath \rho_0 c V_0(\bx_\Gamma) \cos
\big( \tfrac{\omega L}{c} \big).
\end{equation}

\textbf{Similarly, for the rescaled velocity $\bPsi^\delta_-$}, using the
Cauchy-Schwartz inequality and the error
estimate~(\ref{eq:lemma_better_estimate_scaled_pattern-}) leads to
\begin{equation*}
\Big( \int_{\mB^\delta_-} |\Div_\by \bPsi^\delta_-(\bx_\Gamma,\by)| \,
\mathrm{d}\by \Big)^2 \leqslant \frac{2}{\sqrt{\delta}} \int_{\mB^\delta_-}
|\Div_\by \bPsi^\delta_-(\bx_\Gamma,\by)|^2 \,
\mathrm{d}\by \leqslant 2 C_H \sqrt{\delta},
\end{equation*}
and this quantity tends to $0$ as $\delta$ tends to $0$. Using the Gauss
theorem on $\mB^\delta_-$ gives
\begin{multline*}
\int_{\mB^\delta_-} \Div_\by \bPsi^\delta_-= \int_{\mA_C} \bPsi^\delta_-
\big(\bx_\Gamma,\big(y_1,y_2,-\tfrac{2}{\sqrt{\delta}}\big)\big) \cdot (-\be_3) \,
\mathrm{d}y_1 \, \mathrm{d}y_2 \\ + \int_{\Gamma^\delta_-(\sqrt{\delta})}
\bPsi^\delta_-(\bx_\Gamma, \by) \cdot \bn^\delta_-(\by) \, \mathrm{d}\sigma(\by),
\end{multline*}
where $\bn^\delta_-(\by)$ is the unit outward vector of $\mB^\delta_-$, \ie
\begin{equation*}
\bn^\delta_-(\by) = \frac{(y_1,y_2,-(y_3+\delta h_0))}{|(y_1,y_2,-(y_3+ \delta h_0))|}.
\end{equation*}
Using then
\begin{equation*}
\bV^\delta (\bx_\Gamma,-2\sqrt{\delta},(y_1,y_2)) =
\bv^\delta\big(\bx_\Gamma + \delta
\big(y_1,y_2,-\tfrac{2}{\sqrt{\delta}}\big)\big) =
\bPsi^\delta_-\big(\bx_\Gamma, \big(y_1,y_2,-\tfrac{2}{\sqrt{\delta}}\big)\big)
\end{equation*}
and using the second line of
relation~(\ref{eq:one_dimensional_interface_conditions}) gives
\begin{equation}
\label{eq:limit_bPsi_-}
\lim_{\delta \to 0} \int_{\Gamma^\delta_-(\sqrt{\delta})}
\bPsi^\delta_-(\bx_\Gamma, \by) \cdot \bn^\delta_-(\by) \,
\mathrm{d}\sigma(\by) = a_C V_0(\bx_\Gamma) \sin \big( \tfrac{\omega L}{c}
\big). 
\end{equation}

\subsection{Weak convergence in the apertures}
\label{sec:weak-conv-apert}

For each $\delta > 0$, we consider $\bx \in \Omega^\delta$ and corresponding
resonator position $\bx^\delta_\Gamma \in \Gamma^\delta$ such that one of the
three following conditions is satisfied:
\begin{enumerate}
    \item $\bx$ belongs to the neck $\Omega^\delta_N(\bx^\delta_\Gamma)$,
    \item $\bx$ is inside the resonator chamber
    $\Omega^\delta_C(\bx^\delta_\Gamma)$, and
    $\big| \bx - (\bx_\Gamma^\delta,-\delta^2 h_0) \big| < 2
    \delta^{\tfrac{3}{2}}$,
    \item $\bx$ is inside the domain $\Omega$, and
    $\big| \bx - (\bx_\Gamma^\delta,0) \big| < 2
    \delta^{\tfrac{3}{2}}$,
\end{enumerate}
\ie the distance from $\bx$ to the neck
$\Omega^\delta_N(\bx^\delta_\Gamma)$, above and below the aperture, is
at most $2 \delta^{\tfrac{3}{2}}$. We introduce then the domain $\Omega^\delta_A(\bx^\delta_\Gamma)$ as
the union of the neck $\Omega^\delta_N(\bx^\delta_\Gamma)$ and the two
half-spheres of diameter $2 \delta^{\tfrac{3}{2}}$, see
Fig.~\ref{fig:NNF}(a), and we introduce the variable change
$\bx = \bx^\delta_\Gamma + \delta^2 \bz$, \ie it is equivalent to
introduce the variable change $\by = \delta \bz$ in
Section~\ref{sec:weak-conv-patt}. As $\bx$ describes
$\Omega^\delta_A(\bx^\delta_\Gamma)$, $\bz$ describes the domain
$\widehat{\Omega}(2\delta^{-1/2})$ that tends to the unbounded domain
$\widehat{\Omega}$ as $\delta$ tends to $0$, see
Fig.~\ref{fig:NNF}(b) and Fig.~\ref{fig:NNF}(c). We also introduce the
five-dimensional functions $\bfv^\delta$ and $\fp^\delta$ by
\begin{equation}
\label{eq:change_function_aperture}
\bfv^\delta(\bx^\delta_\Gamma,\bz) = \delta^2 \bv^\delta(\bx^\delta_\Gamma +
\delta^2 \bz) \quad \text{and} \quad
\fp^\delta(\bx^\delta_\Gamma,\bz) = p^\delta(\bx^\delta_\Gamma +
\delta^2 \bz).
\end{equation}

The scale change for the velocity $\bfv^\delta$ is due to the following: for $\by \in \Gamma^\delta_-(\sqrt{\delta})$,
\ie for $\bz \in \widehat{\Gamma}_-(1/\sqrt{\delta})$, using the variable change $\by = \delta \bz$ leads to
\begin{equation*}
\int_{\Gamma^\delta_-(\sqrt{\delta})} \bv^\delta(\bx^\delta_\Gamma + \delta \by) \cdot \bn^\delta_-(\by) \,
\mathrm{d} \sigma(\by) = \delta^2 \int_{\widehat{\Gamma}_-(1/\sqrt{\delta})}
\bv^\delta(\bx^\delta_\Gamma + \delta \bz) \cdot \bn^\delta_-(\delta \bz) \, \mathrm{d} \sigma(\bz),
\end{equation*}
that leads to
\begin{equation}
\label{eq:matching_velocity_minus_aperture}
\int_{\Gamma^\delta_-(\sqrt{\delta})} \bPsi^\delta(\bx^\delta_\Gamma,\by) \cdot \bn^\delta_-(\by) \,
\mathrm{d} \sigma(\by) = \int_{\widehat{\Gamma}_-(1/\sqrt{\delta})} \bfv^\delta(\bx^\delta_\Gamma,\bz)
\cdot \bn_-(\bz) \, \mathrm{d} \sigma(\bz),
\end{equation}
with
\begin{equation*}
\bn_-(\bz) = \frac{(z_1,z_2,-(z_3+h_0))}{|(z_1,z_2,-(z_3+h_0))|}.
\end{equation*}

We consider the linearized Navier-Stokes
problem~\eqref{eq:Navier_Stokes} and we apply the isotropic coordinate
change. Similarly to the derivation of
problem~\eqref{eq:Navier_Stokes:resonator}, we obtain the following
system
\begin{subequations}
    \label{eq:Navier_Stokes:aperture}
    \begin{align}
    - \imath \omega \bfv^\delta +
    \tfrac{1}{\rho_0} \nabla_{\bz} 
    \fp^\delta -
    \nu_0 \Laplace_{\bz} \bfv^\delta - \nu'_0
    \nabla_{\bz} \Div_\bz \bfv^\delta
    & = \zerobf, \quad \text{in }
    \Gamma^\delta \times \widehat{\Omega}(2\delta^{-1/2}),
    \label{eq:Navier_Stokes:aperture:M}
    \\
    - \imath \omega \delta^4 \fp^\delta +
    \rho_0 c^2 \Div_{\bz}
    \bfv^\delta
    &= 0, \quad \text{in } \Gamma^\delta \times \widehat{\Omega}(2\delta^{-1/2}),
    \label{eq:Navier_Stokes:aperture:C}
    \\
    \bfv^\delta
    &= \zerobf , \quad \text{on }
    \Gamma^\delta \times \widehat{\Gamma}^\delta_A,
    \label{eq:Navier_Stokes:aperture:B}
    \end{align}
\end{subequations}
where
$\widehat{\Gamma}^\delta_A := \partial \widehat{\Omega}(2\delta^{-1/2}) \cap
\tfrac{1}{\delta^2} \big(\partial \Omega^\delta - \bx^\delta_\Gamma
\big)$ corresponds to the rescaled part of the boundary of
$\Omega^\delta$ in the vicinity of the neck centered at
$\bx^\delta_\Gamma$ (depicted in blue on Fig.~\ref{fig:NNF}(b)), and
tends to $\partial \widehat{\Omega}$ as $\delta$ tends to $0$.

Again, in view of estimate~\eqref{eq:lema_better_estimate} of
Lemma~\ref{lema:better_estimate}, the rescaled functions
$(\bfv^\delta,\fp^\delta)$ satisfy the following estimate for almost
all resonators
\begin{multline}
\label{eq:lemma_better_estimate_scaled_aperture}
\xnorm{\bfv^\delta
    (\bx^\delta_\Gamma,\cdot)}{\Ltwo(\widehat{\Omega}(2\delta^{-1/2}))}^2 +
\tfrac{1}{\delta^4}
\xnorm{\Div_{\bz}
    \bfv^\delta(\bx^\delta_\Gamma,\cdot)}{\Ltwo(\widehat{\Omega}(2\delta^{-1/2}))}^2
+
\xnorm{\Curl_{\bz}
    \bfv^\delta(\bx^\delta_\Gamma,\cdot)}{\Ltwo(\widehat{\Omega}(2\delta^{-1/2}))}^2
\\ + \delta^4 \xnorm{\fp^\delta
    (\bx^\delta_\Gamma,\cdot)}{\Ltwo(\widehat{\Omega}(2\delta^{-1/2}))}^2
+ 
\xnorm{\nabla \fp^\delta
    (\bx^\delta_\Gamma,\cdot)}{\Ltwo(\widehat{\Omega}(2\delta^{-1/2}))}^2
\leqslant C_H
\end{multline}
We extend then the discrete problem~\eqref{eq:Navier_Stokes:aperture}
into a continuous problem on the resonator position
$\bx_\Gamma \in \Gamma$ and the error
estimate~\eqref{eq:lemma_better_estimate_scaled_aperture} into an
$\Ltwo(\Gamma; \Ltwo(\widehat{\Omega}(2\delta^{-1/2})))$ estimate.

The sequence $\bfv^\delta$ is bounded in
$\Ltwo(\Gamma; \Hs(\Div,D) \cap \Hs(\Curl,D) )$ for any bounded open
set $D$ included in $\widehat{\Omega}$ and for any $\delta$ such
that $D \subset \widehat{\Omega}(2\delta^{-1/2})$, then we can extract a
subsequence (still denoted by $\bfv^\delta$) that converges weakly in
$\Ltwo(\Gamma; \Hs(\Div,D) \cap \Hs(\Curl,D) )$. Taking iteratively
$D = D^n := \widehat{\Omega}_{A} \cap \mB(\zerobf, 2^n)$, we state
that $\bfv^\delta$ admits a subsequence that converges weakly
to a function $\bfv_{-2}$ in
$\Ltwo(\Gamma ; \Hs(\Div, \widehat{\Omega}) \cap \Hs(\Curl,
\widehat{\Omega}))$, the subscript ``$-2$'' relates to the shift in
$\delta$ for the function $\bfv^\delta$. We deduce moreover from the
first line of~\eqref{eq:lemma_better_estimate_scaled_aperture} and
using the lower semi-continuity that $\Div \bfv_{-2} = 0$ in
$\widehat{\Omega}$.

Similarly, the sequence $\fp^\delta$ is bounded in
$\Ltwo(\Gamma; \bmV(D))$ for any bounded open set $D$ included in
$\widehat{\Omega}$ and for any $\delta$ such that
$D \subset \widehat{\Omega}(2\delta^{-1/2})$, where
\begin{equation*}
\bmV(D) = \Big\lbrace \fp \in \Ltwoloc(D) \text{ such that } \nabla \fp
\in \Ltwo(D)^3 \Big\rbrace,
\end{equation*}
then doing a similar construction, this sequence admits a subsequence
that converges weakly to $\nabla_\bz \fp_0$ in
$\bmV(\widehat{\Omega})$. Using then the weak convergence in
the continuity equation~\eqref{eq:Navier_Stokes:aperture:M}, we get
that the weak limit $(\bfv_{-2},\fp_0)$ is solution of the following
instationary Stokes problem
\begin{subequations}
    \label{eq:NNF}
    \begin{align}
    \label{eq:NNF:1}
    - \imath \omega {\bfv_{-2}}(\bx_\Gamma,\cdot) +
    \tfrac{1}{\rho_0}\nabla_{\bz} {\fp_0}(\bx_\Gamma,\cdot) -
    \nu_0 \Laplace_{\bz} {\bfv_{-2}}(\bx_\Gamma,\cdot)
    & = \zerobf, && \quad \text{in } \widehat{\Omega}, \\
    \label{eq:NNF:2}
    \Div_{\bz} {\bfv_{-2}}(\bx_\Gamma,\cdot)
    & = 0, && \quad \text{in } \widehat{\Omega}, \\
    {\bfv_{-2}}(\bx_\Gamma,\cdot)
    & = \zerobf , && \quad \text{on } \partial \widehat{\Omega}.
    \label{eq:NNF:3}
    \end{align}
\end{subequations}
Note that this problem can be written equivalently with a
$\Curl_{\bz} \Curl_{\bz}$ operator instead of the $- \Laplace_{\bz}$
operator, since $\Div_{\bz} {\bfv_{-2}}(\bx_\Gamma,\cdot) = 0$. Taking
the divergence gives that $p_0(\bx_\Gamma,\cdot)$ is an harmonic
function on $\widehat{\Omega}$, so that its behaviour towards
infinity is described using spherical functions\cite{hobson:1955}. Then,
following an expansion of $\bfv_{-2}$ as a sum of spherical functions
and functions that are exponentially decaying with respect to the
distance to the boundary $\partial \widehat{\Omega}$, the only
spherical harmonics on a half-sphere that lead to
$\bfv_{-2}(\bx_\Gamma,\cdot) \in \Hs(\Div,\widehat{\Omega}) \cap
\Hs(\Curl,\widehat{\Omega})$ are the spherical harmonics that
admit a behaviour at most constant towards infinity, the constants at
both sides of the wall can be different. Therefore, we seek for two
functions $c_\fm$ and $c_\fj$ defined on $\Gamma$ such that
\begin{equation}
\label{eq:definition_functions_aperture}
\big( \bfv_{-2}(\bx_\Gamma,\bz), \fp_0(\bx_\Gamma,\bz)
\big) = c_\fm(\bx_\Gamma) (\zerobf,1) + c_\fj(\bx_\Gamma)
\big( \bfv(\bz), \fp(\bz) \big),
\end{equation}
where the \emph{neck profile} $(\bfv,\fp) \in \big(
\Hs(\Div,\widehat{\Omega}) \cap \Hs(\Curl,\widehat{\Omega}) \big)
\times \bmV(\widehat{\Omega})$ is solution of the instationary Stokes
problem
\begin{subequations}
    \label{eq:canonical_NNF}
    \begin{align}
    \label{eq:canonical_NNF:1}
    - \imath \omega \bfv + \tfrac{1}{\rho_0}\nabla_\bz
    \fp - \nu_0 \Laplace_\bz 
    \bfv
    & = \zerobf, && \quad \text{in } \widehat{\Omega}, \\
    \label{eq:canonical_NNF:2}
    \Div_\bz \bfv & = 0, && \quad \text{in } \widehat{\Omega}, \\
    \bfv
    & = \zerobf , && \quad \text{on } \partial \widehat{\Omega},
    \label{eq:canonical_NNF:3}
    \intertext{completed by Dirichlet jump conditions at infinity}
    \label{eq:canonical_NNF_infinity}
    \lim_{S \to \infty}
    \fp_{|\widehat{\Gamma}_\pm(S)} &= \pm
    \tfrac{1}{2}, 
    \end{align}
\end{subequations}
where the half-spheres $\widehat{\Gamma}_\pm(S)$ for $S > 0.5 d_0$ are
given by
\begin{multline}
\label{eq:canonical_boundaries}
\widehat{\Gamma}_\pm(S) = \big\lbrace \bz \in
\widehat{\Omega} \text{ such that } \big| \bz
- (\pm 0.5 - 0.5 )h_0 \be_3 \big| = S \\ \text{ and } \pm ( \bz \cdot \be_3
\mp 0.5 h_0
+ 0.5 h_0 ) > 0 \big\rbrace.
\end{multline}
and are depicted on Fig.~\ref{fig:NNF}(c). 

Let us take $S > 0.5 d_0$. We denote by $\widehat{\Omega}(S)$ the
subdomain of $\widehat{\Omega}$ that is delimited by
$\widehat{\Gamma}_-(S)$ and $\widehat{\Gamma}_+(S)$. Since the function
$\bfv$ is divergence-free in $\widehat{\Omega}(S)$ and its
trace vanishes on $\partial \widehat{\Omega}$, it turns out
immediately that
\begin{equation*}
\int_{\widehat{\Gamma}_+(S)}
\tilde{\fv} \cdot \bn + \int_{\widehat{\Gamma}_-(S)}
\tilde{\fv} \cdot \bn = 0, 
\end{equation*}
where $\bn$ is the unit outward normal vector. Following the
formulation of the Rayleigh conductivity
$K_R$\cite{Rayleigh:1870,Rayleigh:1945} which describes the ratio of
the fluctuating volume flow to the driving pressure difference, we
introduce the \emph{effective Rayleigh conductivity} $k_R$ as
\begin{equation}
\label{eq:KR_CC}
k_R := \lim_{S \to \infty} \frac{\imath \omega
    \rho_0}{2} \Big( \int_{\widehat{\Gamma}_+(S)} 
\tilde{\fv} \cdot \bn - \int_{\widehat{\Gamma}_-(S)}
\tilde{\fv} \cdot \bn \Big).
\end{equation}

Existence and uniqueness of problem~\eqref{eq:canonical_NNF} is stated by Proposition~\ref{prop:well_posedness_aperture_problem},
and properties of the effective Rayleigh coefficient $k_R$ is stated by Proposition~\ref{propostion:effective_behaviour_kR}.

It remains to determine the conditions at infinity satisfied by $(\bfv,\fp)$,
using that for almost any $\by \in \Gamma^\delta_-(\sqrt{\delta})$,
\begin{equation*}
\begin{aligned}
\bPsi^\delta_-(\bx_\Gamma,\by) & = \bv^\delta(\bx_\Gamma + \delta \by) =
\bfv^\delta\big(\bx_\Gamma,\tfrac{\by}{\delta}\big), \\
\Phi^\delta_-(\bx_\Gamma,\by) & = p^\delta(\bx_\Gamma + \delta \by) = 
\fp^\delta\big(\bx_\Gamma,\tfrac{\by}{\delta}\big).
\end{aligned}
\end{equation*}

\textbf{For the rescaled velocity $\bfv^\delta$}, we use
relations~(\ref{eq:limit_bPsi_-}), (\ref{eq:matching_velocity_minus_aperture}) and~(\ref{eq:definition_functions_aperture}),
coupled with the definition of the effective Rayleigh conductivity $k_R$, to obtain
\begin{equation*}
a_C V_0(\bx_\Gamma) \sin \big( \tfrac{\omega L}{c} \big) =
\tfrac{k_R}{\imath \rho_0 \omega} c_\fj(\bx_\Gamma).
\end{equation*}
Using the solution representation~(\ref{eq:definition_functions_aperture}), 
we obtain the following relation
\begin{equation}
\label{eq:relationship_velocity_aperture_plus}
\lim_{\delta \to 0} \int_{\widehat{\Gamma}_+(1/\sqrt{\delta})} \bfv^\delta(\bx_\Gamma,\bz)
\cdot \bn_+(\bz) \, \mathrm{d} \sigma(\bz) = c_\fj(\bx_\Gamma) = \tfrac{\imath \rho_0 \omega
    a_C}{k_R} V_0(\bx_\Gamma) \sin \big( \tfrac{\omega L}{c} \big),
\end{equation}
where
\begin{equation*}
\bn_+(\bz) = \frac{(-z_1-z_2,z_3)}{|(-z_1,-z_2,z_3)|}.
\end{equation*}

\textbf{For the rescaled pressure $\fp^{\delta}$}, we use
relations~(\ref{eq:limit_Phi_-}) and~(\ref{eq:definition_functions_aperture})
to obtain
\begin{equation*}
- \imath \rho_0 c V_0(\bx_\Gamma) \cos \big( \tfrac{\omega L}{c} \big) =
c_\fm(\bx_\Gamma) - 0.5 c_\fj(\bx_\Gamma).
\end{equation*}
Using the again the solution representation~(\ref{eq:definition_functions_aperture}),
we obtain the following relation
\begin{equation}
\label{eq:relationship_pressure_aperture_plus}
\lim_{\delta \to 0}
\fp^\delta(\bx_\Gamma,\cdot)_{|\widehat{\Gamma}_+(1/\sqrt{\delta})} = -
\imath \rho_0 c V_0(\bx_\Gamma) \cos \big( \tfrac{\omega L}{c} \big)
+ \tfrac{\imath \rho_0 \omega
    a_C}{k_R} V_0(\bx_\Gamma) \sin \big( \tfrac{\omega L}{c} \big).
\end{equation}

\subsection{Weak convergence in the pattern above aperture}
\label{sec:weak-conv-patt-2}

Similarly to the formal derivation in the pattern below aperture in
Section~\ref{sec:weak-conv-patt}, for each $\delta > 0$ we consider $\bx \in
\Omega$ and and corresponding resonator position $\bx^\delta_\Gamma$ such that
$\bx \in \Omega^\delta_H(\bx^\delta_\Gamma)$. Then we
define the zone above aperture
\begin{equation*}
\Omega^\delta_+(\bx^\delta_\Gamma) := \left\lbrace \bx \in \bx^\delta_\Gamma
+ \delta \mA \times (0,2\sqrt{\delta}) \text{ such that } \bx \not\in
B_3(\bx^\delta_\Gamma, \delta^{\tfrac{3}{2}}) \right\rbrace, 
\end{equation*}
and the rescaled zone below aperture (see Fig.~\ref{fig:domain_aperture_plus})
for the variable change $\by = \delta^{-1} (\bx-\bx^\delta_\Gamma)$.

Here, we are interested in the behaviour of the solution $\bv^\delta,p^\delta)$
solution of~(\ref{eq:Navier_Stokes}) in the domain $\Omega$, close to the
apertures. To do so, we introduce the five-dimensional functions
$\bPsi^\delta_+$ and $\Phi^\delta_+$ by
\begin{equation}
\label{eq:change_function_aperture_above}
\bPsi^\delta_+(\bx^\delta_\Gamma, \by) = \bv^\delta(\bx^\delta_\Gamma
+ \delta \by) \quad \text{and} \quad \Phi^\delta_+(\bx^\delta_\Gamma,
\by) = \tfrac{1}{\delta} p^\delta(\bx^\delta_\Gamma + \delta \by),
\end{equation}
similarly to the introduction of the functions $\bPsi^\delta_-$ and
$\Phi^\delta_-$ in~\eqref{eq:change_function_aperture_below}.

\begin{figure}[!bt]
    \centering
    \begin{tikzpicture}[scale=0.6]
    \draw[white] (-2.5,-0.5) rectangle (2.5,0.5);
    \fill[black!20!white] (-3.5,0.345) rectangle (3.5,7);
    \filldraw[draw=red, fill=white] (1,0.345) arc(0:180:1) node
    [pos=0.5,above] {$\color{red} \Gamma^\delta_+(\sqrt{\delta})$};
    \draw[thick] (-1,0.345) -- (-0.346,0.345) -- (-0.346,-0.345) --
    (-3,-0.345);
    \draw[dashed] (-3.5,0.345) -- (-3.5,8);
    \draw[thick] (1,0.345) -- (0.346,0.345) -- (0.346,-0.345) --
    (3,-0.345);
    \draw[dashed] (3.5,0.345) -- (3.5,8);
    \draw[blue,thick] (-3.5,0.345) -- (-1,0.345);
    \draw[blue,thick] (1,0.345) -- (3.5,0.345) node [pos=1.0,right] {$\mG^\delta_+$};
    \draw[->] (0,0.345) ++ (0,0) -- ++ (60:1) node [pos=1.1,right]
    {$\sqrt{\delta}$}; 
    \draw[<->] (1.8,-0.345) -- (1.8,0.345) node[pos=0.5, right]
    {$h_0 \delta$};
    \draw[<->] (-0.346,-0.28) -- (0.346,-0.28) node[pos=0.5, below]
    {$d_0 \delta$};
    \draw (-3.5,7) -- (3.5,7) node[pos=0.5,below] {$y_3 = 2 / \sqrt{\delta}$};
    \filldraw (0,0.345) circle (0.1) node [above] {$\zerobf$};
    \end{tikzpicture}
    \caption{Representation of the canonical domain $\mB^\delta_+$ for the pattern
        above aperture (gray).}
    \label{fig:domain_aperture_plus}
\end{figure}
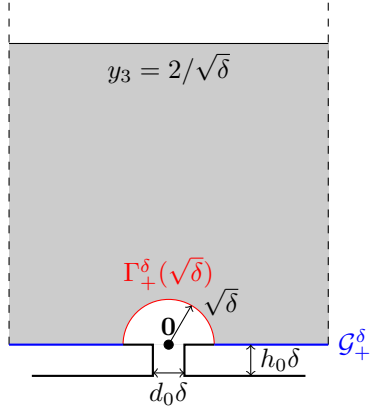

Then, in view of estimate~(\ref{eq:lema_better_estimate}) of
Lemma~\ref{lema:better_estimate} the rescaled functions $(\bPsi^\delta_-,
\Phi^\delta_-)$ satisfy the following estimate for almost all resonator
\begin{multline}
\label{eq:lemma_better_estimate_scaled_pattern+}
\delta
\xnorm{\bPsi^\delta_+(\bx^\delta_\Gamma,\cdot)}{\Ltwo(\mB^\delta_+)}^2
+ \tfrac{1}{\delta} \xnorm{\Div_{\by}
    \bPsi^\delta_+(\bx^\delta_\Gamma,\cdot)}{\Ltwo(\mB^\delta_+)}^2
+ \delta^3 \xnorm{\Curl_{\by}
    \bPsi^\delta_+(\bx^\delta_\Gamma,\cdot)}{\Ltwo(\mB^\delta_+)}^2
\\ + \delta
\xnorm{\Phi^\delta_+(\bx^\delta_\Gamma,\cdot)}{\Ltwo(\mB^\delta_+)}^2
+ \tfrac{1}{\delta} \xnorm{\nabla_\by
    \Phi^\delta_+(\bx^\delta_\Gamma,\cdot)}{\Ltwo(\mB^\delta_+)}^2
\leqslant C_H.
\end{multline}
We extend again this \emph{discrete} error estimate into a \emph{continuous}
error estimate on $\Ltwo(\Gamma,\Ltwo(\mB^\delta_+))$. This error estimate
allows us to give some results about the rescaled functions.

\textbf{For the rescaled pressure $\Phi_+^\delta$}, using the Cauchy-Schwartz
inequality and the error
estimate~(\ref{eq:lemma_better_estimate_scaled_pattern+}) leads to
\begin{equation*}
\Big( \int_{\mB^\delta_+} |\nabla_\by \Phi^\delta_+(\bx_\Gamma,\by)| \,
\mathrm{d}\by \Big)^2 \leqslant \frac{2}{\sqrt{\delta}} \int_{\mB^\delta_+}
|\nabla_\by \Phi^\delta_+(\bx_\Gamma,\by)|^2 \,
\mathrm{d}\by \leqslant 2 C_H \sqrt{\delta},
\end{equation*}
and this quantity tends to $0$ as $\delta \to 0$, so that the gradient of
$\Phi^\delta_+(\bx_\Gamma,\cdot)$ tends to $0$ almost everywhere in $\mA \times
(0, \infty)$. It remains to check that the average of
$\Phi^\delta_+$ remains uniformly bounded as well. This can be checked using
again the energy estimate~(\ref{eq:lemma_better_estimate_scaled_pattern+}) and
using that
\begin{equation*}
\delta \xnorm{1}{\Ltwo(\mB^\delta_+)}^2 = \delta \Big( 
\tfrac{2}{\sqrt{\delta}}-
\tfrac{2\pi}{3}\delta^{\tfrac{3}{2}} \Big)^2.
\end{equation*}
We can then extract a subsequence (that we still denote by $\Phi^\delta_-$)
that converges to a constant. Using
\begin{equation*}
\fp^\delta(\bx_\Gamma, \delta^{-1} \by) = p^\delta(\bx_\Gamma + \delta \by) = \Phi^\delta_-(\bx_\Gamma,\by)
\end{equation*}
for almost every $\by \in \Sigma^\delta_+(\sqrt{\delta})$
and~(\ref{eq:relationship_pressure_aperture_plus}), we obtain that
\begin{equation}
\label{eq:limit_Phi_+}
\lim_{\delta \to 0} \Phi^\delta_+ = - \imath \rho_0 c V_0(\bx_\Gamma) \cos
\big( \tfrac{\omega L}{c} \big) + \tfrac{\imath \rho_0 \omega a_C}{k_R}
V_0(\bx_\Gamma) \sin \big( \tfrac{\omega L}{c} \big).
\end{equation}

\textbf{Similarly, for the rescaled velocity $\bPsi^\delta_+$}, using the
Cauchy-Scwhartz inequality leads to
\begin{equation*}
\Big( \int_{\mB^\delta_+} |\Div_\by \bPsi^\delta_+(\bx_\Gamma,\by)| \,
\mathrm{d}\by \Big)^2 \leqslant \frac{2}{\sqrt{\delta}} \int_{\mB^\delta_+}
|\Div_\by \bPsi^\delta_+(\bx_\Gamma,\by)|^2 \,
\mathrm{d}\by \leqslant 2 C_H \sqrt{\delta},
\end{equation*}
and this quantity tends to $0$ as $\delta$ tends to $0$. Using the Gauss
theorem on $\mB^\delta_+$ gives
\begin{multline*}
\int_{\mB^\delta_+} \Div_\by \bPsi^\delta_+= \int_{\mA} \bPsi^\delta_+
\big(\bx_\Gamma,\big(y_1,y_2,\tfrac{2}{\sqrt{\delta}}\big)\big) \cdot \be_3 \,
\mathrm{d}y_1 \, \mathrm{d}y_2 \\ - \int_{\Gamma^\delta_+(\sqrt{\delta})}
\bPsi^\delta_+(\bx_\Gamma, \by) \cdot \bn_+(\by) \, \mathrm{d}\sigma(\by) +
\int_0^{2/\sqrt{\delta}} \int_{\partial \mA_C} \bPsi_+(\bx_\Gamma,\by) \cdot
\bn \, \mathrm{d} \sigma(y_1,y_2) \, \mathrm{d} y_3 .
\end{multline*}
Since $\mA$ is a parallelogram driven by the two vectors $(\ba_i)_{i \in
    \lbrace 1,2 \rbrace}$, we call $\Gamma_{\mA,i}$ the edge of $\mA$ such that
$\Gamma_{\mA,i} + \ba_i$ is also one edge of $\mA$. Then, for $\bx_\Gamma \in \Gamma$
and $\by \in
\Gamma_{\mA_i} \times (0,\infty)$, we consider the point $\bx := \bx_\Gamma +
\delta \by + \delta \ba_i$ that corresponds locally to a common boundary of the
two semi-infinite strips centered respectively at $\bx_\Gamma$ and $\bx_\Gamma
+ \delta \ba_i$:
\begin{equation}
\label{eq:Psi_+_boundaries}
\bPsi^\delta_+(\bx_\Gamma,\by + \ba_i) = \bv^\delta(\bx_\Gamma + \delta \by +
\delta \ba_i) = \bPsi^\delta_+(\bx_\Gamma + \delta \ba_i,\by).
\end{equation}
Since both the norms of $\Div \bPsi^\delta_+$ and $\Curl \bPsi^\delta_+$ are
decaying to $0$ using~(\ref{eq:lemma_better_estimate_scaled_pattern+}), the
norm of the gradient of each component of $\bPsi^\delta_+$ also tends to $0$ as
$\delta$ tends to $0$. Using 
\begin{equation*}
\bPsi^\delta_+(\bx_\Gamma,\by + \ba_i) = \bPsi^\delta_+(\bx_\Gamma,\by) +
\int_{0}^1 \nabla \bPsi^\delta_+(\bx_\Gamma,\by + t \ba_i) \cdot \ba_i \, \mathrm{d}t,
\end{equation*}
we deduce that
\begin{equation*}
\lim_{\delta \to 0}  \int_0^{2/\sqrt{\delta}} \int_{\partial \mA_C}
\bPsi_+(\bx_\Gamma,\by) \cdot \bn \, \mathrm{d} \sigma(y_1,y_2) \, \mathrm{d}
y_3 = 0.
\end{equation*}
Using finally~(\ref{eq:relationship_velocity_aperture_plus}), we obtain
\begin{equation}
\label{eq:limit_bPsi_+}
\lim_{\delta \to 0} \int_{\mA} \bPsi^\delta_+
\big(\bx_\Gamma,\big(y_1,y_2,\tfrac{2}{\sqrt{\delta}}\big)\big) \cdot \be_3 \,
\mathrm{d}y_1 \, \mathrm{d}y_2 = a_C V_0(\bx_\Gamma) \sin
\big( \tfrac{\omega L}{c} \big).
\end{equation}

\subsection{Weak convergence in the macroscopic region}
Now, we are interested in the behaviour of the solution $(\bv^\delta,p^\delta)$
of~(\ref{eq:Navier_Stokes}). Using the assumption estimate~(\ref{eq:universal_estimate}),
there exists a subsequence that weakly converges to a limit $(\bv_0,p_0)$ in $\Hs(\Div,\Omega) \times \Hone(\Omega)$.

The weak convergence applied to the continuity
equation~(\ref{eq:Navier_Stokes:C}) gives immediately the second line
of~(\ref{eq:limit_term_complete}). Multiplying the momentum
equation~(\ref{eq:Navier_Stokes:M}) by a test function $\bw \in \Hs(\Div;
\Omega) \cap \Hs(\Curl; \Omega)$ such that $\bw = 0$ on $\partial \Omega$ and
using the Gauss theorem leads to
\begin{multline*}
-\imath \omega \xscal{\bv^\delta}{\bw}{\Omega} + \tfrac{1}{\rho_0}
\xscal{\nabla \rho^\delta}{\bw}{\Omega} + \nu_0 \delta^4
\xscal{\Curl \bv^\delta}{\Curl \bw}{\Omega} \\ + (\nu_0 + \nu_0')
\delta^4 \xscal{\Div \bv^\delta}{\Div \bw}{\Omega} =
\xscal{\bff}{\bw}{\Omega}.
\end{multline*}
Using then the weak convergence associated to the boundness of the norms
$\delta^2 \xnorm{\Curl \bv^\delta}{\Ltwo(\Omega)}$ and
$\xnorm{\Div \bv^\delta}{\Ltwo(\Omega)}$ leads to the first line
of~\eqref{eq:limit_term_complete}.

Next point is to derive the boundary condition. The easiest part is to derive
the boundary condition on $\partial \Omega \setminus \Gamma$. Indeed, the trace
operator
\begin{align*}
\gamma_0 : \quad \Hs(\Div; \Omega)
&\to \Ltwo(\partial \Omega \setminus
\Gamma),
\\
\bv & \mapsto \bv \cdot \bn,
\end{align*}
is a lower semi-continuous operator, and since $\bv^\delta$ weakly converges to
$\bv_0$ in $\Hs(\Div; \Omega)$, one has
\begin{equation*}
\xnorm{\bv_0 \cdot \bn}{\Ltwo(\partial \Omega \setminus \Gamma)}
\leqslant \liminf_{\delta \to 0}
\xnorm{\bv^\delta \cdot \bn}{\Ltwo(\partial \Omega \setminus
    \Gamma)} = 0.
\end{equation*}
Determination of the boundary condition on $\Gamma$ is more involved,
and need the matching with the solution in the pattern above
apertures. Indeed, for a particular resonator position $\bx_\Gamma \in \Gamma$,
we consider the domain $\mO^\delta_+ = \bx^\delta_\Gamma + \delta \mA \times
(\sqrt{\delta},2\sqrt{\delta}) \subset \Omega$. We can moreover see
that the point $\by := \delta^{-1} \big( \bx - \bx_\Gamma \big)$
belongs to $\mB^\delta_+$. Then, similarly to the writing of the
conditions~(\ref{eq:limit_Phi_-}) and~(\ref{eq:limit_bPsi_-}), we get the matching
\begin{equation}
\label{eq:matching_above}
\begin{aligned}
\int_{\mA}
p^\delta\big(\bx_\Gamma + \delta \big(y_1,y_2,\tfrac{2}{\sqrt{\delta}}\big)\big) \, \mathrm{d}(y_1,y_2) & =
\int_{\mA}
\Phi^\delta_+\big(\bx_\Gamma,\big(y_1,y_2,\tfrac{2}{\sqrt{\delta}}\big)\big)
\, \mathrm{d}(y_1,y_2), \\
\int_{\mA}
\bv^\delta\big(\bx_\Gamma + \delta \big(y_1,y_2,\tfrac{2}{\sqrt{\delta}}\big)\big) \cdot \be_3 \, \mathrm{d}(y_1,y_2) &
= \int_{\mA}
\bPsi^\delta_+\big(\bx_\Gamma,\big(y_1,y_2,\tfrac{2}{\sqrt{\delta}}\big)\big)
\be_3 \, \mathrm{d}(y_1,y_2).
\end{aligned}
\end{equation}
The right-hand sides of~\eqref{eq:matching_above} are treated
using~(\ref{eq:limit_Phi_+}) and~(\ref{eq:limit_bPsi_+}), respectively. The left-hand sides are
treated using the $\Ltwo$ weak convergence of $\bv^\delta \cdot \be_3$
and $p^\delta$ to $\bv_0 \cdot \be_3$ and $p_0$ respectively and using
an elliptic regularity result. Therefore, it holds
\begin{equation}
\label{eq:matching_above_limit_replaced}
\begin{aligned}
\bv_0(\bx_\Gamma) \cdot \be_3 & = a_C
\sin \big( \tfrac{\omega L}{c} \big) V_0(\bx_\Gamma), \\
p_0(\bx_\Gamma) & = \Big( - \imath \omega \rho_0 \cos
\big( \tfrac{\omega L}{c} \big) + \tfrac{\imath \omega a_C
    \rho_0}{k_R} \sin \big( \tfrac{\omega L}{c} \big) \Big)
V_0(\bx_\Gamma),
\end{aligned}
\end{equation}
where the function $V_0(\bx_\Gamma)$ is still unknown. It can still be
eliminated so that the first line of~(\ref{eq:limit_term_complete}) is also
proved as well.

\subsection{Uniqueness of the limit}
\label{sec:uniqueness_limit}

We recall here the statement of the Lemma~\ref{lema:well-posedness_limit} about
the existence and uniqueness of the limit
problem~(\ref{eq:limit_term_complete}): let $\bff \in \Hs(\Div; \Omega)$, then
there exists a unique solution $(\bv_0,p_0) \in \Hs(\Div; \Omega) \times
\Hone(\Omega)$, except for frequencies $\omega \in \Lambda$, where $\Lambda$ is
a subset of $\tfrac{\pi c}{L} \IN$.

\begin{proof}[Proof of Lemma~\ref{lema:well-posedness_limit}]
    Due to the equivalence of problems~\eqref{eq:limit_term_complete}
    and~\eqref{eq:limit_term_complete_pressure}, and since
    $\Div \bff \in \Ltwo(\Omega)$, we seek for a solution
    $p_0 \in \Hone(\Omega)$ of this problem. Multiplying the first line
    of~\eqref{eq:limit_term_complete_pressure} by a test function
    $q \in \Hone(\Omega)$ and integrating by parts, the variational
    formulation associated to this problem is: find
    $p_0 \in \Hone(\Omega)$ such that, for any $q \in \Hone(\Omega)$,
    \begin{equation}
    \label{eq:limit_term_pressure_vf}
    \xscal{p_0}{q}{\Hone(\Omega)} - \big( 1 + \tfrac{\omega^2}{c^2}
    \big) \xscal{p_0}{q}{\Ltwo(\Omega)} + b_\Gamma(p_0,q) = - \rho_0
    \xscal{\Div \bff}{q}{\Ltwo(\Omega)}, 
    \end{equation}
    where $\xscal{p_0}{q}{\Hone(\Omega)}$ (respectively
    $\xscal{p_0}{q}{\Ltwo(\Omega)}$) is the inner scalar product of
    $p_0$ and $q$ in $\Hone(\Omega)$ (resp. in $\Ltwo(\Omega)$), and
    $b_\Gamma(p_0,q)$ is the boundary operator given by
    \begin{equation}
    \label{eq:boundary_operator_b_gamma}
    b_\Gamma(p_0,q) := \frac{\sin \big(\tfrac{\omega L}{c}\big)}{\frac{c}{ \omega
            a_C} \cos \big(\tfrac{\omega L}{c}\big) - \frac{2}{k_R} \sin \frac{\omega
            L}{c}} \xscal{p_0}{q}{\Ltwo(\Gamma)}.
    \end{equation}
    Since $\Im(k_R) > 0$ and all other quantities of the expression
    $\tfrac{c}{ \omega a_C} \cos \big( \tfrac{\omega L}{c} \big)- \tfrac{2}{k_R}
    \sin \big( \tfrac{\omega L}{c} \big)$ are real-valued, that expression never vanishes. The subspace
    $\Hone(\Omega)$ is compactly embedded in $\Ltwo(\Omega)$ by the
    Rellich-Kondrachov theorem, \cf Chapter 2 of the book of Braess\cite{Braess:2007}. Similarly,
    the trace operator $\gamma_0 : u \mapsto u_{|\Gamma}$ is a
    continuous operator from $\Hone(\Omega)$ to $\Hhalf(\Gamma)$, and
    again the subspace $\Hhalf(\Gamma)$ is compactly embedded into
    $\Ltwo(\Gamma)$. Hence the left-hand side of the variational
    formulation~\eqref{eq:limit_term_pressure_vf} can be written under
    the form $\xscal{(I + K) p_0}{q}{\Hone(\Omega)}$, where the operator
    $I$ is the identity operator and the operator $K$ defined by
    \begin{equation*}
    \xscal{K p}{q}{\Hone(\Omega)} := - \big( 1 + \tfrac{\omega^2}{c^2}
    \big) \xscal{p}{q}{\Ltwo(\Omega)} + b_\Gamma(p,q), \quad \forall
    p,q \in \Hone(\Omega),
    \end{equation*}
    is compact. Hence, the sum $I+K$ is a Fredholm
    operator of index~$0$\cite{Sauter.Schwab:2011}, \ie the dimension
    of its kernel coincides with the co-dimension of its range, and by
    the Fredholm alternative uniqueness implies existence.
    
    Assume now that $\xscal{(I + K) p_0}{q}{\Hone(\Omega)} = 0$ for any
    test function $q \in \Hone(\Omega)$. From now on, we consider two
    different cases, depending on the nature of the operator $b_\Gamma$:
    \begin{enumerate}
        \item $\sin \big( \tfrac{\omega L}{c} \big)= 0$, \ie
        $\omega \in \tfrac{\pi c}{L} \IN$. In that case, the problem
        admits a unique solution when $\tfrac{\omega^2}{c^2}$ is not an
        eigenvalue of the $-\Delta$ operator in $\Omega$. We denote then
        by $\Lambda$ the subset of $\tfrac{\pi c}{L} \IN$ such that
        $\tfrac{\omega^2}{c^2}$ is also an eigenvalue of the $-\Delta$
        operator in $\Omega$.
        \item $\sin \big( \tfrac{\omega L}{c} \big) \not= 0$, \ie
        $\omega \not\in \tfrac{\pi c}{L} \IN$. In that case, taking the particular
        test function $q = \overline{p_0}$ and then the imaginary part of
        $\xscal{(I + K) p_0}{\overline{p_0}}{\Hone(\Omega)}$, we find that
        $\Im \, b_\Gamma(p_0,{\overline{p_0}}) = 0$. Hence $p_0$
        vanishes on $\Gamma$. Due to the boundary condition on~$\Gamma$, also
        $\nabla p_0 \cdot \bn$ vanishes on $\Gamma$. Using
        then the unique continuation theorem on elliptic
        operators\cite{Protter:1960} (see also Section~4.3 of the book of
        Leis\cite{LeisBook}), we deduce that $p_0$ vanishes in whole $\Omega$ and
        therefore existence and uniqueness of a solution $p_0$
        of~\eqref{eq:limit_term_complete_pressure} follow.
    \end{enumerate}
\end{proof}

Existence and uniqueness of Lemma~\ref{lema:well-posedness_limit} states that the whole sequence $(\bv_\delta,p_\delta)$ weakly
converges to $(\bv_0,p_0)$ is $\Hs(\Div,\Omega) \times \Hone(\Omega)$, and not only a subsequence.

\section{Numerical simulations}
\label{sec:numerical-results}

In this section we describe first how we compute numerically the effective
Rayleigh conductivity $k_R$ (Section~\ref{sec:numer-comp-effect}), study the
normalized specified acoustic impedance as a function of frequency
(Section~\ref{sec:study-acoust-imped}) and the frequency-dependent disspation
in a wave-guide predicted by the derived model
(Section~\ref{sec:study-dissipation}) in comparison with existing models in
the literature.

We consider an array of Helmholtz resonators in a cylindrical duct of fixed
geometric mean $\delta = \sqrt{\delta_1 \delta_2}=8.5\,\text{mm}$ of
longitudinal and azimuthal inter-hole distances $\delta_1$ and $\delta_2$ (the
model depends on $\delta$, not on $\delta_1$ and $\delta_2$ separately).  The
aperture of the Helmholtz resonator is a cylinder of diameter
$d_\delta = 1\,\text{mm}$ and height $h_\delta = 1\,\text{mm}$. The relative
area of each resonator chamber is $a_C = 0.9$ such that the area of cross
section is $a_C \delta^2 = 65.025\,\text{mm}^2$. We show results for different
resonator depths $L$.

For all computations we consider the following physical parameters. The mean
density of the air is $\rho_0 = 1.2252 \, \text{kg}\,\text{m}^{-3}$, the
speed of sound is $c=340.45 \,\text{m}\,\text{s}^{-1}$ and the
viscosity is $\nu = 14.66\times10^{-6}\,\text{m}^2 \text{s}^{-1}$, see Tables
A.3, A.4 and A.7 of Lahiri\cite{Lahiri:2014} for mean pressure
$p = 101.325\,\text{kPa}$ and temperature $T = 288.15\,\text{K}$.

\subsection{Numerical computation of the effective Rayleigh conductivity}
\label{sec:numer-comp-effect}

In this section, we describe how to compute numerically approximations of the
solution $(\bfv,\fp)$ of the characteristic
problem~(\ref{eq:canonical_problem}) around one hole and from this an
approximations to the effective Rayleigh conductivity. For this we consider the
truncated domains $\widehat{\Omega}(S)$ of radius $S$ where we have to impose
additional boundary conditions at the half spheres $\Gamma_\pm(S)$.  It can be
justified similarly to a previous study of a macroscopic problem with boundary
layer effects\cite{Delourme.Schmidt.Semin:2016} that acoustic velocity profile
functions satisfies the decay condition $\nabla \bfv^\top \bn \to \zerobf$ when
$|\bz| \to \infty$. Hence, we consider the \emph{truncated characteristic
    problem} with homogeneous Neumann boundary conditions for $\bfv_S$ on
$\Gamma_\pm(S)$: Seek
$(\bfv_S,\fp_S) \in \Hone(\widehat{\Omega}(S))^3 \times
\Ltwo(\widehat{\Omega}(S))$ solution of
\begin{equation}
\label{eq:canonical_problem_truncated}
\begin{aligned}
- \imath \omega \bfv_S + \tfrac{1}{\rho_0}\nabla
\fp_S - \nu_0 \Laplace
\bfv_S & = \zerobf, && \quad \text{in } \widehat{\Omega}(S), \\
\Div \bfv_S & = 0, && \quad \text{in } \widehat{\Omega}(S), \\
\bfv_S & = \zerobf , && \quad \text{on } \partial
\widehat{\Omega}(S) \cap \partial \widehat{\Omega}, \\
\nabla \bfv_S^\top \bn & = \zerobf, && \quad \text{on }\Gamma_\pm(S), \\
\fp_S &= \pm \tfrac{1}{2}, && \quad \text{on }\Gamma_\pm(S) .
\end{aligned}
\end{equation}
The truncated characteristic problem is well-posed which can be similarly shown
as the well-posedness of~(\ref{eq:canonical_problem}) (see proof of
Proposition~\ref{prop:well_posedness_aperture_problem}).

\begin{prop}
    There exists a unique solution
    $(\bfv_S,\fp_S) \in \Hone(\widehat{\Omega}(S))^3 \times
    \Ltwo(\widehat{\Omega}(S))$ of~(\ref{eq:canonical_problem_truncated}).
\end{prop}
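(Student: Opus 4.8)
The plan is to follow the same saddle-point strategy as in the proof of Proposition~\ref{prop:well_posedness_aperture_problem}, the only genuine differences being that the domain $\widehat{\Omega}(S)$ is now bounded, so the pressure may be sought directly in $\Ltwo(\widehat{\Omega}(S))$ rather than in a local space, and that the velocity carries \emph{mixed} boundary conditions: a homogeneous Dirichlet condition on the rigid part $\partial\widehat{\Omega}(S)\cap\partial\widehat{\Omega}$ and the natural condition $\nabla\bfv_S^\top\bn=\zerobf$ on the spherical caps $\Gamma_\pm(S)$. Accordingly I would work in the velocity space $\bX := \{\bfw\in\Hone(\widehat{\Omega}(S))^3 : \bfw = \zerobf \text{ on } \partial\widehat{\Omega}(S)\cap\partial\widehat{\Omega}\}$, a closed subspace of $\Hone(\widehat{\Omega}(S))^3$.

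First I would lift the inhomogeneous pressure condition exactly as before, writing $\fp_S = \Theta + \tilde{\fp}_S$ with the cut-off $\Theta$ from Proposition~\ref{prop:well_posedness_aperture_problem}; since $\Theta = \pm\tfrac12$ on $\Gamma_\pm(S)$, the remainder $\tilde{\fp}_S$ has vanishing trace there and is sought in $\Ltwo(\widehat{\Omega}(S))$. Multiplying the momentum equation by $\overline{\bfw}$ and integrating the viscous and pressure terms by parts, the boundary contributions drop out because $\bfw$ vanishes on the rigid wall while $\nabla\bfv_S^\top\bn=\zerobf$ on $\Gamma_\pm(S)$. One thus arrives at a saddle-point problem: find $(\bfv_S,\tilde{\fp}_S)\in\bX\times\Ltwo(\widehat{\Omega}(S))$ with $a(\bfv_S,\bfw)+b(\tilde{\fp}_S,\bfw)=\ell(\bfw)$ and $b(\fq,\bfv_S)=0$, where $a$, $b$ and $\ell$ have the same expressions as in~\eqref{eq:canonical_problem:var} with $\widehat{\Omega}$ replaced by $\widehat{\Omega}(S)$.

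I would then verify the two Brezzi conditions. The form $a$ is clearly continuous, and its coercivity on $\bX$ follows from $\Re a(\bfw,\bfw)=\nu_0\xnorm{\nabla\bfw}{\Ltwo(\widehat{\Omega}(S))}^2$ together with a Poincaré--Friedrichs inequality, which holds on $\bX$ precisely because $\bfw$ is prescribed to vanish on the portion $\partial\widehat{\Omega}(S)\cap\partial\widehat{\Omega}$ of positive surface measure; this upgrades control of the gradient to control of the full $\Hone$-norm. For the inf-sup condition I would show that the divergence operator $\Div:\bX\to\Ltwo(\widehat{\Omega}(S))$ is surjective with closed range, which supplies a bounded right inverse and hence the required lower bound for $b$. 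Brezzi's theorem then yields a unique $(\bfv_S,\tilde{\fp}_S)$, and undoing the lift gives the unique solution $(\bfv_S,\fp_S)$ of~\eqref{eq:canonical_problem_truncated}.

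The main obstacle is the inf-sup/surjectivity of $\Div$ on $\bX$. Contrary to the pure Dirichlet case, where the divergence reaches only the mean-zero subspace of $\Ltwo$, here the velocity is left free on the caps $\Gamma_\pm(S)$, so the normal flux through $\Gamma_\pm(S)$ is unconstrained and $\Div$ maps \emph{onto all of} $\Ltwo(\widehat{\Omega}(S))$; establishing this---for instance via a Bogovskii-type right inverse adapted to the mixed boundary conditions (\cf Girault--Raviart)---is the one step that does not simply transcribe from Proposition~\ref{prop:well_posedness_aperture_problem}.
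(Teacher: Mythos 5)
Your proposal is correct and follows essentially the same route as the paper, which simply asserts that well-posedness of the truncated problem "can be similarly shown" via the saddle-point/Brezzi argument of Proposition~\ref{prop:well_posedness_aperture_problem}; you have filled in exactly the two points where the transcription is not literal (the Poincar\'e inequality on the subspace with partial Dirichlet data, and the surjectivity of the divergence onto all of $\Ltwo(\widehat{\Omega}(S))$ because the flux through $\Gamma_\pm(S)$ is unconstrained). No gaps.
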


Following the formulation of the effective Rayleigh conductivity $k_R$ defined
by~(\ref{eq:KR}), we introduce the approximate effective Rayleigh conductivity
$k_R(S)$ as
\begin{equation*}
k_R(S) :=  \frac{\imath \omega \rho_0}{2} \Big( \int_{\widehat{\Gamma}_+(S)}
\bfv_S \cdot \bn - \int_{\widehat{\Gamma}_-(S)}
\bfv_S \cdot \bn \Big).
\end{equation*}
We obtain an even more accurate approximation by computing $k_R(S)$ for
several values of $S$ and an extrapolation for $S \to \infty$ with a polynomial in $1/S$.
In the examples in this section we computed for $S = 40, 45, 50, 55, 60$.

\subsection{Study of the acoustic impedance and local resonance frequency}
\label{sec:study-acoust-imped}

Following the works of Webster in the 1910s\cite{Webster:1919}, the thesis of
C.~Lahiri\cite{Lahiri:2014} and the references within, the normalized specified
acoustic impedance $\zeta$ is defined by
\begin{equation}
\label{eq:acoustic_impedance}
\zeta := - \tfrac{\overline{p_0}}{c \rho_0 \overline{\bv_0} \cdot \bn}.
\end{equation}
Note that on our definition there is
a complex conjugate and a change of sign as it was stated in our previous
work\cite{Schmidt.Semin.ThoensZueva.Bake:2018}.

For the impedance boundary condition given by the third line
of~(\ref{eq:limit_term_complete}) and by identification, the normalized
specified acoustic impedance for array of Helmholtz resonators is given by
\begin{equation}
\label{eq:acoustic_impedance_value}
\zeta_{\textsc{AHM-3v}} := \tfrac{-\omega \Im(k_R)}{c |k_R|^2} + \imath \Big( \tfrac{\omega
    \Re(k_R)}{c |k_R|^2} - \tfrac{1}{a_C} \cot \big( \tfrac{\omega L}{c} \big) \Big),
\end{equation}
the subscript AHM-3v stands for the proposed \emph{three-scale asymptotic homogenization method with viscosity}.

We compare formula~(\ref{eq:acoustic_impedance_value}) with the most recent
analytic formula taking into account viscosity (in a stagnant flow) in the
acoustics literature, that is the formula of Guess\cite{Guess:1975} 
\begin{equation}
\label{eq:acoustic_impedance_Guess}
\zeta_{\textsc{Gue}} = (1+\imath) \tfrac{\sqrt{8\omega\nu}}{c\sigma} \big(1+
\tfrac{h_\delta}{d_\delta} \big) + \tfrac{\imath \omega (h_\delta +
    \delta_{\textsc{cor}})}{c\sigma} + \tfrac{\omega^2 d^2}{8 c^2 \sigma} - \tfrac{\imath}{1-\varepsilon} \cot \big(
\tfrac{\omega L}{c} \big)\ ,
\end{equation}
extended by a correction $1-\varepsilon$ for thickness of the side walls of the resonance chamber\cite{Kooi.Sarin:1981}.
In view of~\eqref{eq:acoustic_impedance_value} we choose $\varepsilon = a_C$.
Here, $\sigma := \pi d_\delta^2 / (4\delta^2)$
designates the porosity of the periodic array of apertures which takes value
$\sigma
= 0.011$ in our study. Moreover,
$\delta_{\textsc{cor}}$
stands for the so-called end-point corrector for which different formulas exist
in literature.  We will use in our study the end-point corrector
$\delta_{\textsc{cor}} = 8d_\delta / (3\pi)$
of Morse\cite{Morse:1948} that takes the value
$\delta_{\textsc{cor}} = 0.849\,\text{mm}$ and of Ingard\cite{Ingard:1953}
based of a series expansion that takes the value
$\delta_{\textsc{cor}} = 0.709\,\text{mm}$ 
(when respecting the interaction with chamber walls through the parameter $a_C$).
Note that Ingard's approximate formula 
$\delta_{\textsc{cor}} = 8d_\delta /(3\pi)\big(1-1.25\sqrt{\sigma/\pi}(1+1/\sqrt{a_C})\big)$ 
with two terms gives $\delta_{\textsc{cor}} = 0.720\,\text{mm}$ and
so almost the same value, and, hence, we use only Ingard's original formula.
The end-point correction $\delta_{\textsc{cor}} = 8d_\delta /(3\pi)(1-0.7\sqrt{\sigma})$ used by Guess\cite{Guess:1975} seems 
to neglect the interaction with the chamber walls at all and we do not take it into account in the study.


As for the formula of Guess the normalized specified acoustic impedance
$\zeta_{\textsc{AHM-3v}}$ of our approach is a combination of an impedance of
the aperture and a reactance of the Helmholtz resonator, where normalized
specified acoustic resistance and reactance denote the real and imaginary part
of the acoustic impedance $\zeta$, respectively.  Hence, the resistance
$\Re(\zeta_{\textsc{AHM-3v}})$ is -- as for Guess' model -- independent of the
resonator chamber depth~$L$.  For Guess' model it is even independent of the
choice of the end-point correction.  It admits a pole when
$\tfrac{\omega L}{c}$ is a multiple of $\pi$, \ie when $L$ corresponds to a
multiple of $\tfrac{\lambda}{2}$, when denoting by
$\lambda:=\tfrac{2\pi c}{\omega}$ the characteristic one-dimensional wavelength
in the Helmholtz resonator.  For the resonator depth $L=100\,\text{mm}$ in our
study the poles of the impedance are multiples of $1702\,\text{Hz}$. Moreover,
the resistance $\Re(\zeta_{\textsc{ckr-v}})$ is a positive quantity since
$\Im(k_R) < 0$ thanks to Proposition~\ref{propostion:effective_behaviour_kR},
which is also the case for Guess' model.


\begin{figure}[!bt]
    \centering
    \begin{tikzpicture}[scale=0.8]
    \begin{axis}[
    xlabel=Frequency in Hz,
    xmin=0, xmax=2000,
    ymin=-3, ymax=3,
    ylabel=Reactance $\Im(\zeta)$,
    width=7cm, height=8cm,
    line width=1pt,
    y label style={at={(axis description cs:-0.1,.5)},anchor=south},
    legend style={at={(0.1,1.1)},anchor=west,legend columns=-1}]
    \addplot[smooth,color=black]
    table [x=freq, y=imag]{zeta_AHM-3v_L100.txt};
    \addlegendentry {AHM-3v};
    \addplot[dashed,smooth,color=black]
    table [x=freq, y=imag]{zeta_guess+ingard_L100.txt};
    \addlegendentry {Guess w. Ingard's end-corr.};
    \addplot[dashdotted,smooth,color=black]
    table [x=freq, y=imag]{zeta_guess+morse_L100.txt};
    \addlegendentry {Guess w. Morse' end-corr.};
    \addplot[smooth,color=black]
    table [x=freq, y=imag]{zeta_AHM-3v_L100_2.txt};
    \addplot[dashed,smooth,color=black]
    table [x=freq, y=imag]{zeta_guess+ingard_L100_2.txt};
    \addplot[dashdotted,smooth,color=black]
    table [x=freq, y=imag]{zeta_guess+morse_L100_2.txt};
    \draw[dashed, thin, color=black] (axis cs:1702,-3) -- (axis cs:1702,3); 
    \draw[solid,  thin, color=black] (axis cs:50,0) -- (axis cs:2000,0);
    \end{axis}
    \begin{axis}[
    xlabel=Frequency in Hz,
    xshift=7cm,
    xmin=0, xmax=2000,
    ymin=0, ymax=1,
    width=7cm, height=8cm,
    ylabel=Resistance $\Re(\zeta)$,
    line width=1pt,
    y label style={at={(axis description cs:-0.1,.5)},anchor=south},
    legend style={at={(0.1,0.8)},anchor=west}]
    \addplot[smooth,color=black]
    table [x=freq, y=real]{zeta_AHM-3v_L100.txt};
    \addplot[smooth,color=black,dashdotted]
    table [x=freq, y=real]{zeta_guess+ingard_L100.txt};
    \addplot[smooth,color=black]
    table [x=freq, y=real]{zeta_AHM-3v_L100_2.txt};
    \addplot[smooth,color=black,dashdotted]
    table [x=freq, y=real]{zeta_guess+ingard_L100_2.txt};
    \draw[dashed, thin, color=black] (axis cs:1702,-3) -- (axis cs:1702,3); 
    \end{axis}
    \end{tikzpicture}
    \caption{Reactance $\Im(\zeta)$ ({\em left})
        and resistance $\Re(\zeta)$ ({\em right})
        in dependence of frequency
        $f = \tfrac{\omega}{2\pi}$ for
        $d_\delta=h_\delta=1\,\text{mm}$, $\delta=8.5\,\text{mm}$, $L = 100\,\text{mm}$ and $a_C=0.9$.
        Note that the resistance 
        is independent of $L$ for all models and
        for Guess' model independent of the end-correction.
        The vertical line correspond to $\lambda/2 = 1702\,\text{Hz}$ where the reactance $\Im(\zeta)$ admits a pole
        and the array of Helmholtz resonators acts as a sound-hard wall.}
    \label{fig:plot_impedance}
\end{figure}
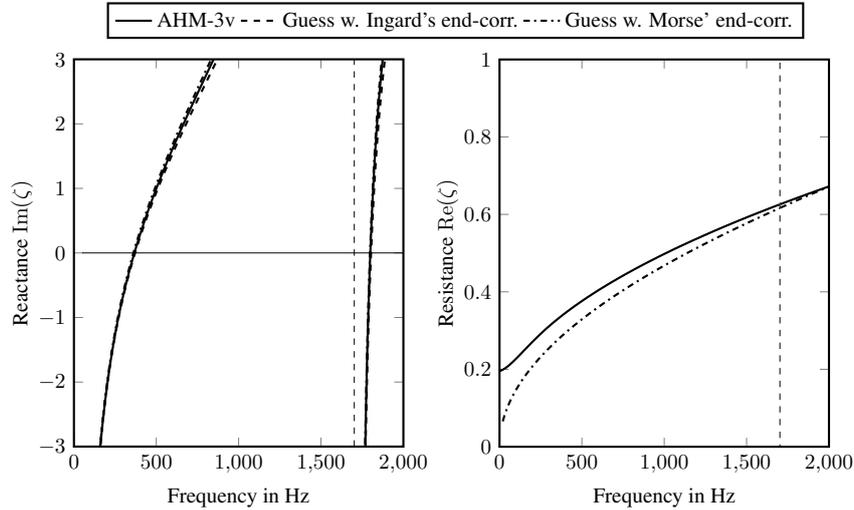

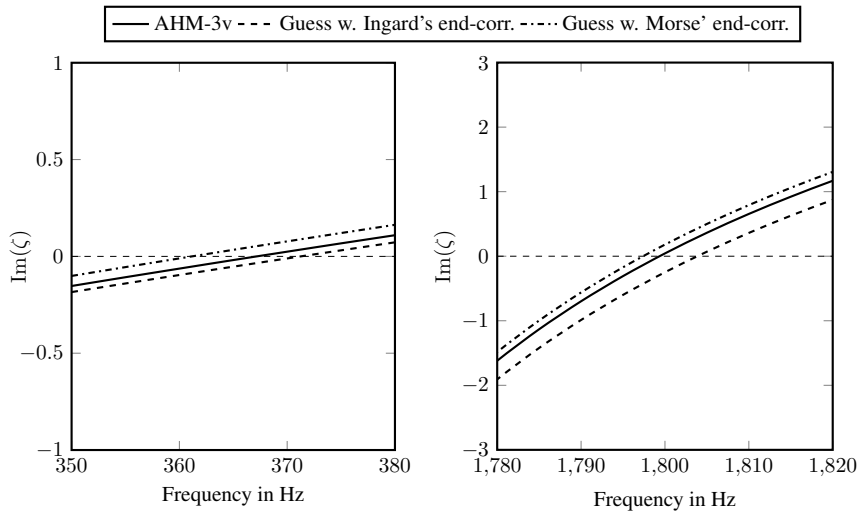
\begin{figure}[!bt]
    \centering
    \begin{tikzpicture}[scale=0.8]
    \begin{axis}[
    xlabel=Frequency in Hz,
    xmin=350, xmax=380,
    ymin=-1, ymax=1,
    ylabel=$\Im(\zeta)$,
    line width=1pt,
    width=6.9cm, height=8cm,
    y label style={at={(axis description cs:-0.1,.5)},anchor=south},
    legend style={at={(0.1,1.1)},anchor=west,legend columns=-1}
    ]
    \addplot[smooth,color=black]
    table [x=freq, y=imag]{zeta_AHM-3v_L100_zoom.txt};
    \addlegendentry {AHM-3v};
    \addplot[dashed,smooth,color=black]
    table [x=freq, y=imag]{zeta_guess+ingard_L100_zoom.txt};
    \addlegendentry {Guess w. Ingard's end-corr.};
    \addplot[dashdotted,smooth,color=black]
    table [x=freq, y=imag]{zeta_guess+morse_L100_zoom.txt};
    \addlegendentry {Guess w. Morse' end-corr.};
    \draw[dashed, thin, color=black] (axis cs:350,0) -- (axis cs:380,0);
    \end{axis}
    \begin{axis}[
    xlabel=Frequency in Hz,
    xshift=7cm,
    xmin=1780, xmax=1820,
    ymin=-3, ymax=3,
    ylabel=$\Im(\zeta)$,
    line width=1pt,
    width=7.1cm, height=8cm,
    y label style={at={(axis description cs:-0.1,.5)},anchor=south}
    ]
    \addplot[smooth,color=black]
    table [x=freq, y=imag]{zeta_AHM-3v_L100_2.txt};
    \addplot[dashed,smooth,color=black]
    table [x=freq, y=imag]{zeta_guess+ingard_L100_2.txt};
    \addplot[dashdotted,smooth,color=black]
    table [x=freq, y=imag]{zeta_guess+morse_L100_2.txt};
    \draw[dashed, thin, color=black] (axis cs:1780,0) -- (axis cs:2000,0);
    \end{axis}
    \end{tikzpicture}
    \caption{Reactance $\Im(\zeta)$ as 
        as function of frequency in a neighborhood of the first (\textit{left}) and second (\textit{right}) characteristic frequency.
    }
    \label{fig:plot_impedance_zoom}
\end{figure}

In Figure~\ref{fig:plot_impedance} and in Figure~\ref{fig:plot_impedance_zoom}
we plot the reactance $\Im(\zeta)$ for a resonator depth $L=100\,\text{mm}$ and
the resistance $\Re(\zeta)$ (that is independent of $L$) for our model in
comparison with the formula of Guess with end corrections by Morse and Ingard
as functions of the frequency.  The reactance curve of our model is close to
the one of Guess for both end-point corrections, and closest to the end-point
correction of Ingard. However, the resistance curves differ from Guess'
formula.  The difference in resistance is the higher the lower is the
frequency, for 367\,Hz it is 19.1\% and for 1800\,Hz only 1.27\%.  We observe
that the resistance $\Re(\zeta_{\textsc{ckr-v}})$ tends for $\omega \to 0$ to a
positive constant, that is $\Re(\zeta_{\textsc{AHM-3v}})=0.1947$ in our study,
where $\zeta_{\textsc{Gue}}$ of Guess' model tends to zero.  Though, the
low-frequency behaviour of Guess' model seems to be unphysical, which is most
likely based on the wrong assumption that the quantity $\sqrt{\nu/\omega}$ is
uniformly small in $\omega$.




In the view of the works of Panton and Miller\cite{Panton.Miller:1975} the
resonance frequencies are the roots of $\Im(\zeta)$.  Guess' model predicts the
first resonance frequency with $5\,\text{Hz}$ lower using Morse' end-correction
and $4\,\text{Hz}$ higher using Ingard's end-correction than AHM-3v. The
second resonance frequency predicted by Guess' model is $2\,\text{Hz}$ lower
using Morse' end-correction and $5\,\text{Hz}$ higher using Ingard's
end-correction than our model.



\begin{table}[tb]
    \centering
    \begin{tabular}{cccccc}
        & \multicolumn{2}{c}{resonance frequencies} && \multicolumn{2}{c}{dissipation maxima} \\
        \cline{2-6}
        & \parbox[t]{4.5em}{\centering first}  & \parbox[t]{5em}{\centering second} &&
        \parbox[t]{4em}{\centering first} & \parbox[t]{4em}{\centering second} \\
        \hline
        \parbox[t]{5em}{\centering AHM-3v} & 367\,Hz & 
        1799
        \,Hz && 359\,Hz & 1793\,Hz\\
        \hline
        Guess w. Morse' end-corr. & 362\,Hz & 1797\,Hz && 351\,Hz & 1791\,Hz \\
        \hline
        Guess w. Ingard's end-corr. & 371\,Hz
        & 1804
        \,Hz && 360\,Hz & 1797\,Hz \\
        \hline
        \hline
    \end{tabular}
    \caption{Resonance frequencies that corresponds to zeros of $\Im(\zeta)$ and dissipation maxima for liner DC006$^\star$
        for a resonator chamber depth
        $L=100\,\text{mm}$.} 
    \label{table:computation_root_im_L100mm}
\end{table}

\subsection{Dissipation by an array of Helmholtz resonators in a duct}
\label{sec:study-dissipation}

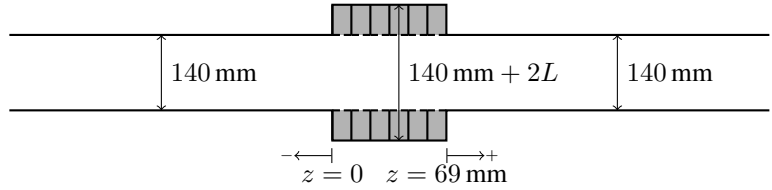
\begin{figure}[!bt]
    \centering
    \begin{tikzpicture}
    \fill[black!30!white] (-0.75,-0.5) rectangle (0.75,-0.9);
    \fill[black!30!white] (-0.75,0.5) rectangle (0.75,0.9);
    \draw[thick] (-5,-0.5) -- (-0.75,-0.5) -- (-0.75,-0.9) -- (0.75,-0.9) -- (0.75,-0.5)
    -- (5,-0.5);
    \draw[thick] (-5,0.5) -- (-0.75,0.5) -- (-0.75,0.9) -- (0.75,0.9) -- (0.75,0.5) --
    (5,0.5);
    \draw[<->] (3,-0.5) -- (3,0.5) node[pos=0.5,right] {$140\,\text{mm}$};
    \draw[<->] (-3,-0.5) -- (-3,0.5) node[pos=0.5,right] {$140\,\text{mm}$};
    \draw[<->] (0.125,-0.9) -- (0.125,0.9) node[pos=0.5,right] {$140\,\text{mm}+2L$};
    \draw[|->] (-0.75,-1.1) -- (-1.25,-1.1) node[pos=1.2] {{\tiny $-$}}
    node[pos=0,below] {$z=0$};
    \draw[|->] (0.75,-1.1) -- (1.25,-1.1) node[pos=1.2] {{\tiny $+$}}
    node[pos=0,below] {$z=69\,\text{mm}$};
    
    \foreach \i in {0,...,5}
    {
        \pgfmathparse{0.25*\i-0.75}\let\repos\pgfmathresult;
        \draw[thick] (\repos,-0.5) -- (\repos,-0.9);
        \draw[thick] (\repos,0.5) -- (\repos,0.9);
        \draw[thick] (\repos,-0.5) -- (\repos+0.1,-0.5);
        \draw[thick] (\repos+0.15,-0.5) -- (\repos+0.25,-0.5);
        \draw[thick] (\repos,0.5) -- (\repos+0.1,0.5);
        \draw[thick] (\repos+0.15,0.5) -- (\repos+0.25,0.5);
    }
    
    \end{tikzpicture}
    \caption{Considered domain for the duct acoustics. The grey region
        corresponds to the array of Helmholtz resonators.}
    \label{fig:figure_duct}
\end{figure}

Now, we simulate the transmission of a liner in an acoustic test duct (see\cite{Rienstra:2015} for fundamentals of duct acoustics) 
with circular cross-section  with a radius $R_d = 70\,\text{mm}$ (see Fig.~\ref{fig:figure_duct}).
The geometrical setting is similar to previous studies\cite{Lahiri:2014,Schmidt.Semin.ThoensZueva.Bake:2018}
on the duct DC006, where we replace the side chamber 
by an array of Helmholtz resonators. We call therefore the setting DC006$^\star$. 
This array of Helmholtz resonators has a total length $Z = 69\,\text{mm}$ and the other geometrical parameters 
were described in the beginning of this section.
We model the array by the impedance boundary conditions in~\eqref{eq:limit_term_complete_pressure} and compare it to the boundary conditions according to Guess' impedance formula.

We use the formulation for the acoustic pressure, see
(\ref{eq:limit_term_complete_pressure}), with a source term corresponding to an
incoming field $p_{\inc}(r,\theta,z) = \exp(\imath \omega z / c)$ from the
left. The scattered field is computed numerically using the mode matching
procedure as described in a previous work\cite{Semin.Thoens-Zueva.Schmidt:2017}
with $N=5$ radial modes.  We computed the eigenmodes numerically using the C++
Finite Elements Library
\textsc{Concepts}\cite{Frauenfelder.Lage:2002,conceptsweb}.

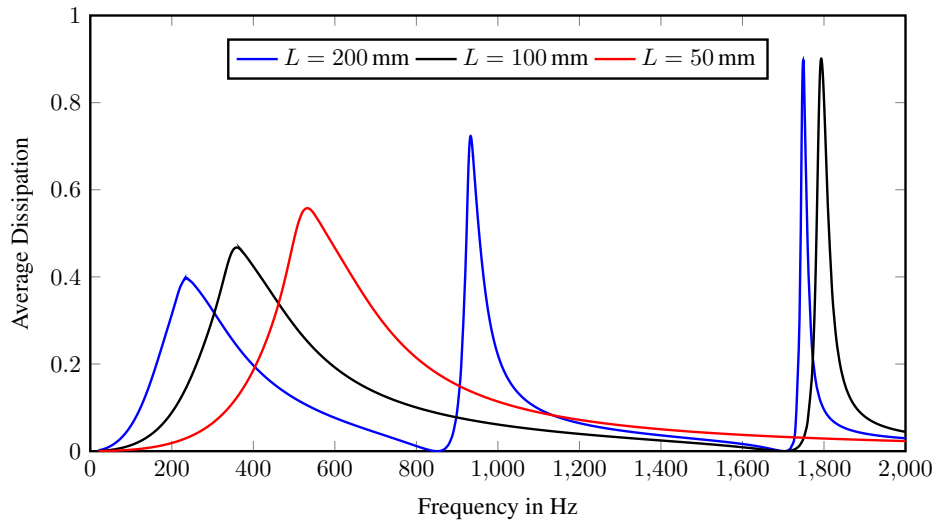
\begin{figure}[!bt]
    \centering
    \begin{tikzpicture}[scale=0.9]
    \begin{axis}[
    xlabel=Frequency in Hz,
    xmin=0,xmax=2000,
    ylabel=Average Dissipation,
    ymin=0,ymax=1,
    line width=1pt,
    width=13.5cm,height=8cm,
    legend style={at={(0.5,0.9)},anchor=center,legend columns=-1}]
    \addplot[smooth,color=blue]
    table [x=freq, y=D]{out_AHM-3v_L200mm.txt};
    \addlegendentry {$L=200\,\text{mm}$};
    \addplot[smooth,color=black]
    table [x=freq, y=D]{out_AHM-3v_L100mm.txt};
    \addlegendentry {$L=100\,\text{mm}$};
    \addplot[smooth,color=red]
    table [x=freq, y=D]{out_AHM-3v_L50mm.txt};
    \addlegendentry {$L=50\,\text{mm}$};

    \end{axis}
    \end{tikzpicture}
    \caption{Numerically computed average dissipation for the liner DC006$^\star$ 
        using the impedance model AHM-3v for different resonator depths $L$.}
    \label{fig:numerical_D_DC006_d1mm}
\end{figure}

We are interested in the energy dissipation of the liner that is defined as
\begin{equation*}
D := 1 - T - R,
\end{equation*}
where $T$ is the total transmitted energy and $R$ is the total reflected energy.
The energy dissipation is a global measure that depends on the damping properties of the array of Helmholtz resonators
as well as on wave profile and macroscopic geometric settings.

We plot in Fig.~\ref{fig:numerical_D_DC006_d1mm} the energy dissipation for
resonator chamber depths $L = 50\,\text{mm}, 100\,\text{mm}, 200\,\text{mm}$ as
functions of frequency for the proposed method AHM-3v.  The observe the first
maximum of energy dissipation at $240\,\text{Hz}$ for $L = 200\,\text{mm}$, %
at $359\,\text{Hz}$ for $L = 100\,\text{mm}$ and %
at $530\,\text{Hz}$ for $L = 50\,\text{mm}$, so at lower frequency for deeper
resonator chambers (while keeping the cross-section).  These frequencies are
slighly below the first resonance frequencies, which are at $235\,\text{Hz}$
for $L = 200\,\text{mm}$, %
at $367\,\text{Hz}$ for $L = 100\,\text{mm}$ and %
at $533\,\text{Hz}$ for $L = 50\,\text{mm}$.  The amplitude at the first energy
dissipation maximum is the higher the smaller the resonator chamber depth $L$.
Independent of the latter the energy dissipation goes to $0$ when decreasing
the frequency to $0$.

There is a first minimum of energy dissipation at $851\,\text{Hz}$ for
$L = 200\,\text{mm}$ and at $1702\,\text{Hz}$ for $L = 100\,\text{mm}$ where
the resonator chamber depth coincides with $\lambda/2$, where again $\lambda$
is the wave-length.  Here, the energy dissipation is $0$ as the impedance
boundary conditions becomes acoustic hard-wall conditions, \ie Dirichlet
boundary conditions for the normal component of the velocity, and the incident
field is passing the liner entirely transmitted. These frequencies correspond
to poles of $\Im(\zeta_{\textsc{ckr-v}})$ and are equal with those predicted by
the model of Guess' regardless of the choice of end-correction.

The second maxima of energy dissipation follows shortly the first minimum. It is observed %
at $930\,\text{Hz}$ for $L = 200\,\text{mm}$ and %
at $1793\,\text{Hz}$ for $L = 100\,\text{mm}$, and, hence, again shortly below 
the second resonance frequencies which are 
at $933\,\text{Hz}$ for $L = 200\,\text{mm}$ and %
at $1799\,\text{Hz}$ for $L = 100\,\text{mm}$. %
Another energy dissipation minimum is observed for $L=200\,\text{mm}$ at
$1702\,\text{Hz}$ followed by a maximum at $1750\,\text{Hz}$, which is again
shortly below the third resonance frequency $1749\,\text{Hz}$.

\begin{figure}[!bt]
    \begin{tikzpicture}[scale=0.9]
    \begin{axis}[
    xlabel=Frequency in Hz,
    xmin=250,xmax=450,
    ylabel=Average Dissipation,
    ymin=0,ymax=1,
    line width=1pt,
    width=6cm, height=8cm,
    y label style={at={(axis description cs:-0.15,.5)},anchor=south},
    legend style={at={(0.1,1.1)},anchor=west,legend columns=-1}]
    \addplot[smooth,color=black]
    table [x=freq, y=D]{out_AHM-3v_L100mm.txt};
    \addlegendentry {AHM-3v};
    \addplot[smooth,color=black,dashed]
    table [x=freq, y=D]{out_guess+ingard_L100mm.txt};
    \addlegendentry {Guess w. Ingard's end-corr.};
    \addplot[smooth,color=black,dashdotted]
    table [x=freq, y=D]{out_guess+morse_L100mm.txt};
    \addlegendentry {Guess w. Morse' end-corr.};
    
    \draw[dashed, thin, color=black] (axis cs:371,0) -- (axis cs:371,1); 
    
    
    \draw[thin, color=black] (axis cs:367,0) -- (axis cs:367,1); 
    \draw[dashdotted, thin, color=black] (axis cs:362,0) -- (axis cs:362,1); 
    
    \end{axis}
    \begin{axis}[
    xlabel=Frequency in Hz,
    xmin=1680,xmax=1920,
    ymin=0,ymax=1,
    xshift=5.8cm,
    line width=1pt,
    width=8.5cm,height=8cm,
    legend style={at={(1.1,0.8)},anchor=west}]
    \addplot[smooth,color=black]
    table [x=freq, y=D]{out_AHM-3v_L100mm.txt};
    \addplot[smooth,color=black,dashed]
    table [x=freq, y=D]{out_guess+ingard_L100mm.txt};
    \addplot[smooth,color=black,dashdotted]
    table [x=freq, y=D]{out_guess+morse_L100mm.txt};
    
    \draw[dashed, thin, color=black] (axis cs:1804,0) -- (axis cs:1804,1); 
    
    
    \draw[thin, color=black] (axis cs:1799.35,0) -- (axis cs:1799.35,1); 
    \draw[dashdotted, thin, color=black] (axis cs:1797,0) -- (axis cs:1797,1); 
    
    
    \end{axis}
    \end{tikzpicture}
    \caption{Numerically computed average dissipation for the liner configuration
        DC006$^\star$ for the model AHM-3v and the models from Rienstra and Guess
        close to the first (\textit{left}) and second (\textit{right}) resonance
        frequencies (see Table~\ref{table:computation_root_im_L100mm}) that are
        shown by vertical lines.  The diameter of the hole is
        $d_\delta = 1\,\text{mm}$ and the resonator depth is $L = 100\,\text{mm}$.}
    \label{fig:numerical_D_DC006_d1mm_mmodels}
\end{figure}
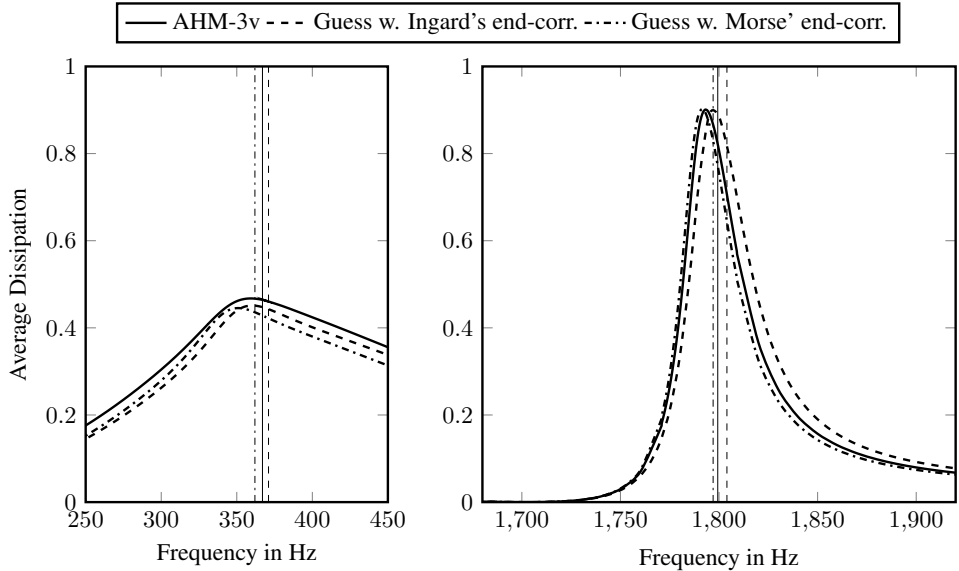

We plot in Fig.~\ref{fig:numerical_D_DC006_d1mm_mmodels} the energy dissipation
curves for resonator chamber length $L = 100\,\text{mm}$ in frequency windows
around the first and second maximum, this for the proposed method AHM-3v
together with those using the model of Guess with end-correctors of Morse and
Ingard. %
The vertical lines correspond to the respective resonance frequencies (see
Table~\ref{table:computation_root_im_L100mm}), \ie to the roots of the
reactance $\Im(\zeta)$ of the respective impedance.  The observed energy
dissipation maxima are very close for the different models (see
Table~\ref{table:computation_root_im_L100mm}), for Guess' model with Ingard's
correction it is only about $2,\text{Hz}$ lower for the first and about
$4\,\text{Hz}$ higher for the second maximum than the proposed model AHM-3v.
The predicted energy dissipation at the first maximum is around $3\,\%$ lower
for Guess' model with Ingards end corrector and around $5\,\%$
lower with Morse' end corrector than for the proposed model
AHM-3v.  This seems to be the consequence of the difference in resistance of the
formulas.  Around the second dissipation maxima the dissipation curves of the
different models are very close confirming that prediction of the first maximum
is most severe.

\section*{Conclusion}

We presented impedance boundary conditions that can be used to predict the damping properties as well as the resonance frequencies 
of periodic arrays of elongated Helmholtz resonators for low acoustic amplitudes in a stagnant gas. %
Considering a period $\delta$ that is small in comparison to the wave-length and even smaller diameter of the orifices, scaled as $\delta^2$,
and small viscosity that is scaled like $\delta^4$ we obtain a non-trivial limit for $\delta \to 0$, in difference to an homogenization with
only two geometric scales, \cf\cite{Bonnet.Drissi.Gmati:2005,Semin.Schmidt:2018}.
In this way the dominating effects are considered on each geometric scale, that are viscous effects and incompressible acoustic velocity around each hole,
only incompressibility in an intermediate zone above and below the perforated plate, one-dimension wave-propagation inside the resonance chamber and
pure acoustics wave-propagation away from the perforated plate. %
The separation of the scales give a choice of the viscous region, in difference to the approach by Lidoine~\etal\cite{Lidoine.Terasse.Abboud.Bennani:2007}. %

It turns out that the impedance boundary conditions depend mainly on effective Rayleigh conductity of the multiperforated plate\cite{Schmidt.Semin.ThoensZueva.Bake:2018}
and the reactance of the resonance chambers. %
The effective Rayleigh conductivity can be approximately computed by discretizating of a canonical problem in two half spaces separated by an wall of finite thickness 
except for a single hole, where the infinite domain has to be truncated. %

The derivation of the impedance boundary conditions is based on the two-scale convergence\cite{Nguetseng:1989,Allaire:1992}, which is to our 
knowledge the first time applied to a periodic transmission problem. The application of the two-scale convergence makes the proof of convergence 
less technical in comparison to the method of matched asymptotic expansions or the method of multiscale analysis, where viscous boundary layers
and the singular behaviour close to all edges of the geometry have to be considered. However, the justification is based on a stability assumption
for the $\delta$-dependent problem that shall be proved in a forthcoming article. %

In numerical simulations we have compared the derived impedance and computed dissipation with an established model~\cite{Guess:1975} of the acoustics community. %
The approach allows to integrate further effects as nonlinear convection for higher sound amplitudes or a gracing or bias flow.

\begingroup
\let\backupsection\section
\renewcommand{\section}[2]{\backupsection{#2}}%

\endgroup


\begin{thebibliography}{10}
    \newcommand{\enquote}[1]{#1}
    
    \bibitem{Allaire:1992}
    G.~Allaire, \enquote{Homogenization and two-scale convergence}, {\it SIAM J.
        Appl. Math.} \textbf{23} (1992) 1482--1518.
    
    \bibitem{Ammari.Zhang:2015}
    H.~Ammari and H.~Zhang, \enquote{A mathematical theory of super-resolution by
        using a system of sub-wavelength {H}elmholtz resonators}, {\it Communications
        in Mathematical Physics} \textbf{337} (2015) 379--428.
    
    \bibitem{andreev2008pointwise}
    V.~Andreev and N.~Kopteva, \enquote{Pointwise approximation of corner
        singularities for a singularly perturbed reaction-diffusion equation in an
        {L}-shaped domain}, {\it Math. Comp.} \textbf{77} (2008) 2125--2139.
    
    \bibitem{auregan2001influence}
    Y.~Aur{\'e}gan, R.~Starobinski and V.~Pagneux, \enquote{Influence of grazing
        flow and dissipation effects on the acoustic boundary conditions at a lined
        wall}, {\it J. Acoust. Soc. Am.} \textbf{109} (2001) 59--64.
    
    \bibitem{Bendali.Fares.Laurens.Tordeux:2012}
    A.~Bendali, M.~Fares, S.~Laurens and S.~Tordeux, \enquote{Numerical study of
        acoustic multiperforated plates}, {\it ESAIM: Proc.} \textbf{37} (2012)
    166--177.
    
    \bibitem{Bendali.Fares.Piot.Tordeux:2012}
    A.~Bendali, M.~Fares, E.~Piot and S.~Tordeux, \enquote{Mathematical
        justification of the rayleigh conductivity model for perforated plates in
        acoustics}, {\it SIAM J. Numer. Anal.} \textbf{73} (2013) 438--459.
    
    \bibitem{Berggren.Bernland.Noreland:2018}
    M.~Berggren, A.~Bernland and D.~Noreland, \enquote{Acoustic boundary layers as
        boundary conditions}, {\it J. Comput. Phys.} \textbf{371} (2018) 633 -- 650.
    
    \bibitem{Bonnet.Drissi.Gmati:2005}
    A.-S. Bonnet-BenDhia, D.~Drissi and N.~Gmati, \enquote{Mathematical analysis of
        the acoustic diffraction by a muffler containing perforated ducts}, {\it
        Math. Models Meth. Appl. Sci.} \textbf{15} (2005) 1059--1090.
    
    \bibitem{Braess:2007}
    D.~Braess, {\it Finite Elements: Theory, Fast Solvers, and Applications in
        Solid Mechanics} (Cambridge University Press, 2007), 3th edition.
    
    \bibitem{brezzi1974existence}
    F.~Brezzi, \enquote{On the existence, uniqueness and approximation of
        saddle-point problems arising from lagrangian multipliers}, {\it Rev. fr.
        autom. inform. rech. opér. , Anal. numér.} \textbf{8} (1974) 129--151.
    
    \bibitem{BrezziFortinBook}
    F.~Brezzi and M.~Fortin, {\it {Mixed and hybrid finite element methods}}
    (Springer, Berlin \& Heidelberg, Germany, 1991).
    
    \bibitem{Claeys.Delourme:2013}
    X.~Claeys and B.~Delourme, \enquote{High order asymptotics for wave propagation
        across thin periodic interfaces}, {\it Asymptot. Anal.} \textbf{83} (2013)
    35--82.
    
    \bibitem{conceptsweb}
    {Concepts Development Team}, {\it Webpage of {Numerical} {C++} {Library}
        {Concepts} 2} (http://www.concepts.math.ethz.ch, 2020).
    
    \bibitem{Delourme.Haddar.Joly:2012}
    B.~Delourme, H.~Haddar and P.~Joly, \enquote{Approximate models for wave
        propagation across thin periodic interfaces}, {\it J. Math. Pures Appl. (9)}
    \textbf{98} (2012) 28--71.
    
    \bibitem{Delourme.Schmidt.Semin:2016}
    B.~Delourme, K.~Schmidt and A.~Semin, \enquote{On the homogenization of thin
        perforated walls of finite length}, {\it Asymptotic Analysis} \textbf{97}
    (2016) 211--264.
    
    \bibitem{evans1990weak}
    L.~C. Evans, {\it Weak convergence methods for nonlinear partial differential
        equations}, 74 (American Mathematical Soc., 1990).
    
    \bibitem{Frauenfelder.Lage:2002}
    P.~Frauenfelder and C.~Lage, \enquote{Concepts -- {An} {Object}-{Oriented}
        {Software} {Package} for {Partial} {Differential} {Equations}}, {\it ESAIM:
        Math. Model. Numer. Anal.} \textbf{36} (2002) 937--951.
    
    \bibitem{Guess:1975}
    A.~Guess, \enquote{Calculation of perforated plate liner parameters from
        specified acoustic resistance and reactance}, {\it J. Sound Vib.} \textbf{40}
    (1975) 119--137.
    
    \bibitem{Helmholtz:1863}
    H.~Helmholtz, {\it {Die Lehre von den Tonmpfindungen als physiologische
            Grundlage f{\"u}r die Theorie der Musik}} (Vieweg und Sohn, Braunschweig,
    1863).
    
    \bibitem{hobson:1955}
    E.~W. Hobson, {\it The theory of spherical and ellipsoidal harmonics} (Chelsea
    Pub. Co., 1955).
    
    \bibitem{howe1979influence}
    M.~S. Howe, \enquote{The influence of grazing flow on the acoustic impedance of
        a cylindrical wall cavity}, {\it J. Sound Vib.} \textbf{67} (1979) 533--544.
    
    \bibitem{howe1998acoustics}
    M.~S. Howe and M.~S. Howe, {\it Acoustics of fluid-structure interactions}
    (Cambridge Monographs on Mechanics. Cambridge university press, Boston
    University, 1998).
    
    \bibitem{Iftimie.Sueur:2010}
    D.~Iftimie and F.~Sueur, \enquote{Viscous boundary layers for the
        {Navier--Stokes} equations with the {Navier} slip conditions}, {\it Arch.
        Ration. Mech. Anal.} \textbf{1} (2010) 39.
    
    \bibitem{Ingard:1953}
    U.~Ingard, \enquote{On the theory and design of acoustic resonators}, {\it J.
        Acoust. Soc. Am.} \textbf{25} (1953) 1037--1061.
    
    \bibitem{Ingard.Ising:1967}
    U.~Ingard and H.~Ising, \enquote{Acoustic nonlinearity of an orifice}, {\it J.
        Acoust. Soc. Am.} \textbf{48} (1967) 6--16.
    
    \bibitem{Kooi.Sarin:1981}
    J.~Kooi and S.~Sarin, \enquote{An experimental study of the acoustic impedance
        of helmholtz resonator arrays under a turbulent boundary layer}, in {\it 7th
        Aeroacoustics Conference} (1981), p. 1998.
    
    \bibitem{Lahiri:2014}
    C.~Lahiri, \enquote{Acoustic performance of bias flow liners in gas turbine
        combustors}, Ph.D. thesis, Technische Universit{\"a}t Berlin, Berlin,
    Germany, 2014.
    
    \bibitem{Landau.Lifschitz:1959}
    L.~D. Landau and E.~M. Lifshitz, {\it {Fluid Mechanics}}, Course of theoretical
    physics / by L. D. Landau and E. M. Lifshitz, Vol. 6 (Pergamon press, New
    York, 1959), 1st edition.
    
    \bibitem{LeisBook}
    R.~Leis, {\it Initial Boundary Value Problems in Mathematical Physics} (B. G.
    Teubner Gmbh, 1986).
    
    \bibitem{Lidoine.Terasse.Abboud.Bennani:2007}
    S.~Lidoine, I.~Terrasse, T.~Abboud and A.~Bennani, \enquote{{Numerical
            prediction of SDOF-Perforated Plate Acoustic Treatment Impedance. Part 1:
            Linear domain}}, {\it 13 th AIAA/CEAS Aeroacoustics Conference} .
    
    \bibitem{Lukes.Rohan:2007}
    V.~Luk\v{e}s and E.~Rohan, \enquote{Modelling of acoustic transmission through
        perforated layer}, {\it Appl. Comp. Mech.} \textbf{1} (2007) 137--142.
    
    \bibitem{Morse:1948}
    P.~M. Morse, {\it Vibration and sound}, volume~2 (McGraw-Hill New York, 1948).
    
    \bibitem{munz2007linearized}
    C.-D. Munz, M.~Dumbser and S.~Roller, \enquote{Linearized acoustic perturbation
        equations for low mach number flow with variable density and temperature},
    {\it J. Comput. Phys.} \textbf{224} (2007) 352--364.
    
    \bibitem{Nazarov:2008}
    S.~A. Nazarov, \enquote{Neumann problem in angular domains with periodic
        boundaries and parabolic perturbations of the boundaries}, {\it Trans. Moscow
        Math. Soc.} \textbf{69} (2008) 153--208.
    
    \bibitem{Nguetseng:1989}
    G.~Nguetseng, \enquote{A general convergence result for a functional related to
        the theory of homogenization}, {\it SIAM J. Appl. Math.} \textbf{20} (1989)
    608--623.
    
    \bibitem{Panton.Miller:1975}
    R.~L. Panton and J.~M. Miller, \enquote{Resonant frequencies of cylindrical
        {H}elmholtz resonators}, {\it J. Acoust. Soc. Am.} \textbf{57} (1975)
    1533--1535.
    
    \bibitem{Protter:1960}
    M.~H. Protter, \enquote{Unique continuation for elliptic equations}, {\it
        Trans. Amer. Math. Soc.} \textbf{95} (1960) 81--91.
    
    \bibitem{Rayleigh:1870}
    L.~Rayleigh, \enquote{On the theory of resonances}, {\it Phil. Trans. Roy. Soc.
        London} \textbf{161} (1870) 77--118.
    
    \bibitem{Rayleigh:1945}
    L.~Rayleigh, {\it The theory of sound (volume 2)} (Dover, New York, 1945).
    
    \bibitem{Rienstra:2015}
    S.~W. Rienstra, \enquote{Fundamentals of duct acoustics}, Von Karman Institute
    Lecture Notes, Nov 2015.
    
    \bibitem{rienstra2011boundary}
    S.~W. Rienstra and M.~Darau, \enquote{Boundary-layer thickness effects of the
        hydrodynamic instability along an impedance wall}, {\it J. Fluid. Dynam.}
    \textbf{671} (2011) 559--573.
    
    \bibitem{Rienstra.Hirschberg:2018}
    S.~W. Rienstra and A.~Hirschberg, {\it An Introduction to Acoustics} (Eindhoven
    University of Technology, Eindhoven, 2018).
    
    \bibitem{Rienstra.Singh:2018}
    S.~W. Rienstra and D.~K. Singh, \enquote{Nonlinear {A}symptotic impedance
        {M}odel for a {H}elmholtz {R}esonator of {F}inite {D}epth}, {\it AIAA
        Journal} \textbf{56} (2018) 1792--1802.
    
    \bibitem{Sanchez.Sanchez:1982}
    J.~Sanchez-Hubert and E.~Sanchez-Palencia, \enquote{Acoustic fluid flow through
        holes and permeability of perforated walls}, {\it J. Math. Anal. Appl.}
    \textbf{87} (1982) 427 -- 453.
    
    \bibitem{Sauter.Schwab:2011}
    S.~Sauter and C.~Schwab, {\it Boundary element methods} (Springer, Berlin \&
    Heidelberg, Germany, 2011).
    
    \bibitem{Schmidt.Semin.ThoensZueva.Bake:2018}
    K.~Schmidt, A.~Semin, A.~Th{\"o}ns-Zueva and F.~Bake, \enquote{On impedance
        conditions for circular multiperforated acoustic liners}, {\it J. Math. Ind.}
    \textbf{8} (2018) 15.
    
    \bibitem{Schmidt.Thoens.Joly:2014}
    K.~Schmidt, A.~Th\"ons-Zueva and P.~Joly, \enquote{Asymptotic analysis for
        acoustics in viscous gases close to rigid walls}, {\it Math. Models Meth.
        Appl. Sci.} \textbf{24} (2014) 1823--1855.
    
    \bibitem{SchulzDiss:2018}
    A.~Schulz, \enquote{Die akustischen {R}andbedingungen perforierter
        {W}andauskleidungen in {S}trömungskanälen}, Ph.D. thesis, Technische
    Universität Berlin, Berlin, Germany, 2018.
    
    \bibitem{Schweizer:2015}
    B.~Schweizer, \enquote{The low-frequency spectrum of small {H}elmholtz
        resonators}, {\it Proc. R. Soc. Lond. A} \textbf{471} (2015) 20140339.
    
    \bibitem{Semin.Delourme.Schmidt:2018}
    A.~Semin, B.~Delourme and K.~Schmidt, \enquote{On the homogenization of the
        {H}elmholtz problem with thin perforated walls of finite length}, {\it ESAIM:
        Math. Model. Numer. Anal.} \textbf{52} (2018) 29--67.
    
    \bibitem{Semin.Schmidt:2018}
    A.~Semin and K.~Schmidt, \enquote{On the homogenization of the acoustic wave
        propagation in perforated ducts of finite length for an inviscid and a
        viscous model}, {\it Proc. R. Soc. Lond. A} \textbf{474}.
    
    \bibitem{Semin.Thoens-Zueva.Schmidt:2017}
    A.~Semin, A.~Th\"ons-Zueva and K.~Schmidt, \enquote{Simulation of reflection
        and transmission properties of multiperforated acoustic liners}, in {\it
        Progress in Industrial Mathematics at ECMI 2016}, ed. P.~{\em et al}.
    Quintela (Springer International Publishing, Cham, Switzerland, 2017),
    volume~26 of {\it Mathematics in Industry}, pp. 69--76.
    
    \bibitem{Singh.Rienstra:2014}
    D.~K. Singh and S.~W. Rienstra, \enquote{Nonlinear asymptotic impedance model
        for a helmholtz resonator liner}, {\it J. Sound Vib.} \textbf{333} (2014)
    3536--3549.
    
    \bibitem{Tam.Kurbatskii:2000}
    C.~K.~W. Tam and K.~A. Kurbatskii, \enquote{Microfluid dynamics and acoustics
        of resonant liners}, {\it AIAA Journal} \textbf{38} (2000) 1331--1339.
    
    \bibitem{Webster:1919}
    A.~G. Webster, \enquote{Acoustical impedance and the theory of horns and of the
        phonograph}, {\it Proc. Natl. Acad. Sci. U. S. A.} \textbf{5} (1919) 275.
    
\end{thebibliography}
\end{document}